\documentclass[11pt]{article}

\usepackage{graphicx}
\usepackage{svg}
\usepackage{amsmath}
\usepackage{amsthm}
\usepackage{amssymb}
\usepackage{fancyhdr}
\usepackage{enumerate}
\usepackage[breaklinks]{hyperref}
\usepackage{lscape}
\usepackage{multirow}
\usepackage{relsize}
\usepackage{setspace}
\usepackage{natbib}
\usepackage{fullpage}
\usepackage{caption}
\usepackage{authblk}
\usepackage{totcount}
\usepackage{array}
\usepackage{longtable}
\usepackage{subcaption} 
\usepackage[titletoc]{appendix}
\usepackage{tikz} 
\usepackage[top    = 0.75in,
bottom = 1.22in,
left   = 1in,
right  = 1in]{geometry}
\usepackage{booktabs}
\usepackage{pgfplots}
\usepackage{algorithm}
\usepackage{algpseudocode}
\usepackage{makecell}
\usepackage{rotating}

\renewcommand{\vec}[1]{\mathbf{#1}}

\newcommand{\black}[1]{{\color{black}#1}}

\newcolumntype{L}[1]{>{\raggedright\arraybackslash}p{#1}}

\newcolumntype{C}[1]{>{\centering\arraybackslash}m{#1}}

\newtotcounter{citnum} 
\def\oldbibitem{} \let\oldbibitem=\bibitem
\def\bibitem{\stepcounter{citnum}\oldbibitem}

\newtheorem{definition}{Definition}[section]

\newtheorem{lemma}[definition]{Lemma}
\newtheorem{corollary}[definition]{Corollary}
\newtheorem{proposition}[definition]{Proposition}
\newtheorem{example}[definition]{Example}

\DeclareMathOperator*{\argmin}{arg\,min}
\DeclareMathOperator*{\argmax}{arg\,max}

\begin{document}
\onehalfspacing

\title{Dynamic Repair and Maintenance of Heterogeneous Machines Dispersed on a Network: A Rollout Method for Online Reinforcement Learning}

\vspace{4mm}

\author{\large Dongnuan Tian\\ \vspace{-4mm} \footnotesize{Department of Management Science, Lancaster University Management School, Lancaster University, Lancaster LA1 4YW, United Kingdom.  Email: d.tian2@lancaster.ac.uk.}\\\text{ }\\ \large Rob Shone\footnote{Corresponding author}\\ \footnotesize{Department of Management Science, Lancaster University Management School, Lancaster University, Lancaster LA1 4YW, United Kingdom.  Email: r.shone@lancaster.ac.uk.}\\}
\date{ }

\pagestyle{fancy}
\fancyhf{}
\rhead{\thepage}
\lhead{\footnotesize \textbf{Tian and Shone: }\textit{Dynamic Repair and Maintenance of Heterogeneous Machines Dispersed on a Network}}

\normalsize
\maketitle

\hrule
\text{ }\\

\noindent \Large \textbf{Abstract}\\
\normalsize

\noindent We consider a problem in which a single repairer is responsible for the maintenance and repair of a collection of machines, positioned at different locations on a network of nodes and edges. Machines deteriorate according to stochastic processes and incur increasing costs as they approach complete failure. The times needed for repairs to be performed, and the amounts of time needed for the repairer to switch between different machines, are random and machine-dependent. The problem is formulated as a Markov decision process (MDP) in which the objective is to minimize long-run average costs. We prove the equivalence of an alternative formulation based on rewards and use this to develop an index heuristic policy, which is shown to be optimal in certain special cases. We then use \textcolor{black}{rollout-based} reinforcement learning techniques to develop a novel online policy improvement (OPI) approach, which uses the index heuristic as a base policy and also as an insurance option at decision epochs where the best action cannot be selected with sufficient confidence. Results from extensive numerical experiments, involving randomly-generated network layouts and parameter values, show that the OPI heuristic is able to achieve close-to-optimal performance in fast-changing systems with state transitions occurring 100 times per second, suggesting that it is suitable for online implementation. \\

\noindent \textit{Keywords: }Repair and maintenance, policy improvement, reinforcement learning, rollout\\

\hrule

\newpage

 \section{Introduction}\label{sec:introduction}


 \black{A classical problem in operations research concerns the dynamic scheduling of maintenance and repair operations for a collection of machines that are prone to failure. These problems fall within the broader framework of reliability modeling, which is concerned with 
the performance of systems that degrade or fail over time due to random influences. 
Related problems have been studied extensively due to their numerous applications in computer systems, telecommunications, transportation infrastructure, manufacturing systems, and other areas (see, e.g., \cite{seyedshohadaie2010risk,ahmad2017reliability, poonia2021performance,geurtsen2023production})}. In the context of machine failures, it is often assumed that each machine has a finite number of possible states, ranging from the pristine or `as-good-as-new' state (in which it functions at its optimal performance level) to the `failed' state (in which it performs at its worst level, or does not function at all). If a machine is left unattended, a \emph{failure process} causes it to pass through a sequence of increasingly undesirable states until it eventually reaches the failed state. The times of state transitions are random, so that the speed of the failure process is unpredictable. The system controller has a set of resources available (in the form of repair engineers, for example) to stall or reverse the failure process by effecting transitions in the opposite direction, i.e. towards the pristine state, and must allocate these resources efficiently over time in order to optimize a pre-specified measure of performance. \black{These types of model assumptions, which combine stochastic degradation with resource-constrained control, have been extensively studied in the literature (see, e.g., \cite{nino2001restless, Glazebrook2005, abbou2019group}).}

The dynamic maintenance and repair problem can be extended and generalized in various ways. For example, one can assume that different kinds of maintenance and repair operations (known as `rate-modifying activities') are available (\cite{lee2001single, woo2017rule, mosheiov2021note}), or
allow machine failures to be governed by multiple dependent failure processes 
(\cite{yousefi2020reinforcement, ogunfowora2023reinforcement, ward2024comprehensive}. 
In this paper we consider a version of the problem that, to the best of our knowledge, is original and enables insightful analyses based on a network model.  We assume that there is a single repair operative on duty (referred to throughout the paper as a `repairer') who bears sole responsibility for the maintenance and repair of $m\geq 2$ machines.  Although the repairer can switch from repairing one machine to another, this cannot be done instantaneously, and the decision-making process should therefore take into account the time required to re-allocate effort from one machine to another in addition to the time needed to perform repairs.  Indeed, the machines might be in different physical locations, in which case some travel time would be needed to switch from one machine to another.  Alternatively, even if travel times are negligible, there might be setup 
requirements for repairing individual machines, so that the repairer must expend some time to perform the necessary preliminary tasks before commencing the next repair.  We refer to the time needed to re-allocate effort from one machine to another as a `switching time', and introduce further complexity by allowing these switching times to be interruptible, so that the repairer may begin the process of switching to one machine but then take another action (such as changing course and switching to a different machine) before completing the switch.

To motivate our network-based model, Figure \ref{intro_fig} shows an example with 4 machines ($m=4$).  The machines are represented as gray-colored nodes in the network, labeled $1$, $2$, $3$ and $4$. At any given time, the repairer occupies one node and can choose either to remain at the current node or attempt to switch to one of the adjacent nodes (connected by edges to the current node). In order to repair a particular machine, the repairer must occupy the corresponding node. Thus, in order to switch from repairing one machine to another, it must traverse an appropriate path through the network.  The white-colored nodes represent intermediate `stages' that must be completed during a switch.  For example, to switch from machine $1$ to $3$, the repairer would need to pass through nodes $5$, $6$ and $7$.  However, the direction of movement can be changed at any time, so the repairer might follow a path such as $(1,5,6,5,2)$.  This would represent a switch from machine $1$ to $3$ that gets `interrupted' in favor of a switch to machine $2$ instead. By designing the network appropriately, we can model intricate relationships between the travel times or setup requirements of different machines, including the effects on switching times when the repairer makes partial progress in switching from one machine to another. In Section \ref{sec:formulation} we provide details of how the system can be modeled as a Markov decision process (MDP). \\

\begin{figure}[hbtp]
    \begin{center}
        \includeinkscape[scale = 0.8]{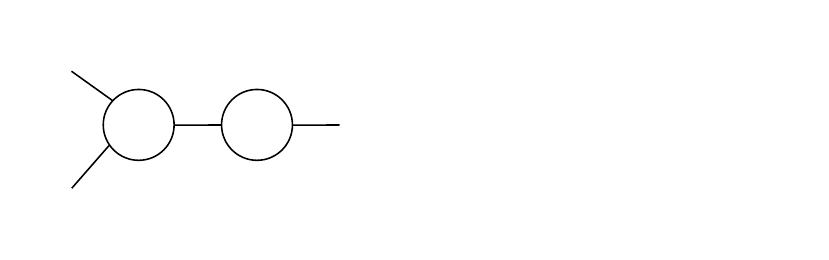}
    \end{center}
    \caption{A network with 4 machines, 3 intermediate stages and a single repairer. Gray-colored nodes represent machines, and white-colored nodes represent intermediate stages.}
    \label{intro_fig}
\end{figure}

One study of particular relevance to ours is that of \cite{Duenyas1996}, which considers the dynamic assignment of a single server in a system of parallel, heterogeneous queues.  Although their model is presented in a different context from ours (i.e. servicing of customers in a queueing system rather than repair of machines), there are similarities in its mathematical formulation, including the assumption of Poisson customer arrivals (analogous to machine degradations in our model) and the incorporation of random switching times.  
In our recent work (\cite{tian2025}), we have considered an extension of the job scheduling problem in \cite{Duenyas1996} that allows the server to interrupt processing and switching times and also uses a network formulation to encode the amounts of effort needed to switch between different tasks. The formulation in \cite{tian2025} has similarities to the one that we consider in the current paper, but there are two main differences. Firstly, in \cite{tian2025} it is assumed that there can be any number of jobs waiting to be processed at a demand point and this leads to the formulation of an MDP with an infinite state space, in which system stability is an important consideration. In this paper we obtain a finite-state MDP because the set of possible conditions for any machine $i$ is finite. While this removes the need to consider questions involving stability, it introduces other complications related to multichain policies and irregular value functions that we elaborate on in Sections \ref{sec:index} and \ref{sec:polimp}. Secondly, in \cite{tian2025} it is assumed that the cost of having $k$ jobs waiting at a particular demand point is linear in $k$, but in this paper we allow the cost function $f(k)$ to be more general, as the effect on a machine's performance may be non-linear in the number of degradation events that has occurred. We also note that the solution methods in \cite{tian2025} are focused on index policies, whereas in this paper we begin by developing an index policy but subsequently use an altered version of this as a base policy in a novel online policy improvement algorithm.

The main contributions of our paper are as follows:\\
\vspace{-2.5mm}
\begin{itemize}
\item We consider an original model of a repair and maintenance problem in which repair and switching times not only follow random distributions but are also interruptible.  In addition, our proposed network formulation allows switching times to depend not only on the machines involved in the switch, but also on the amount of progress made during any switches that have been interrupted.
\item We develop an index-based heuristic for choosing which machine to switch to next, with decisions being continuously re-calculated during repair and switching times in order to allow interruptions. In order to develop the heuristic, we first prove an equivalence between two cost formulations in the underlying MDP formulation. We then prove that the index heuristic is optimal in certain special cases of our problem.
\item In order to obtain a stronger-performing heuristic policy we apply reinforcement learning (RL) techniques and approximate the value function for the index heuristic.  We then use \textcolor{black}{a rollout method, which we refer to as online policy improvement (OPI), to search for improved decisions during online implementation. Our OPI method} includes a novel safety mechanism, based on reliability weights, which mandates that the decision given by the index policy is followed in situations where the best action cannot be identified with sufficient confidence.
\item We present the results of computational experiments with randomly-generated network layouts and parameter values in order to test the performances of our heuristics and investigate the effects of adjusting key parameter values.\\
\end{itemize}
\vspace{-2.5mm}

\textcolor{black}{Section \ref{sec:lit_review} provides a review of relevant literature, including a discussion of how our model could be applied in different application settings.} Section \ref{sec:formulation} provides details of our mathematical formulation. Section \ref{sec:index} introduces our index heuristic and also includes proofs of its optimality in certain special cases. Section \ref{sec:polimp} explains the use of online policy improvement for obtaining a stronger-performing heuristic policy. Section \ref{sec:numerical} provides details of results from our numerical experiments. Finally, our concluding remarks are given in Section \ref{sec:conclusions}.\\

\textcolor{black}{\section{Literature review}\label{sec:lit_review}}

\textcolor{black}{As discussed in Section \ref{sec:introduction}, our study builds upon the previous literature on repair and maintenance problems. In particular, it introduces a network formulation to model travel (or setup) times between heterogeneous machines that degrade according to stochastic processes.
However, our mathematical model and proposed solution approaches can also be applied more generally to other real-world settings in which limited resources are allocated dynamically to assets that degrade randomly over time. For example, we can consider network security problems in which connected devices are vulnerable to attacks or other disruptions, healthcare settings in which patient conditions worsen without effective intervention, or infrastructure monitoring systems in which bridges or roads are classified into discrete condition grades based on inspection criteria. In this section we summarize important contributions to the literature within each of these areas.}

\textcolor{black}{Within the maintenance optimization literature, foundational survey articles by \cite{Cho1991}, \cite{Dekker1997} and \cite{Nicolai2008} have established the need for multi-component models with stochastic degradation processes and limited maintenance resources. In the classical literature, one can find several articles that (like our paper) formulate MDPs for maintenance decisions; see, for example, \cite{Berg1976,Ohashi1982,Ozekici1988,Dekker1995,
Wijnmalen1997}. These papers use dynamic programming methods to establish structural properties of optimal policies or develop heuristics in different settings, but they do not include the spatial features of the problem that we consider. More recently, several studies have modeled repair and maintenance problems using restless bandit formulations, based on the principle that only a certain subset of machines can be attended to at any given time, and those that are unattended are liable to change state (i.e. degrade) rather than remain in the same condition.  \cite{Glazebrook2005} considered a discrete-time formulation with multiple repairers and used the Whittle's index method (see \cite{Whittle1988}) to derive index heuristics in order to minimize expected discounted costs.  \cite{abbou2019group} also used Whittle indices to minimize discounted costs but proposed a group maintenance formulation, in which each `bandit' is a production facility consisting of several machines with different possible states.  \cite{Ruiz2020} considered a scenario with \emph{imperfect} maintenance, in which repairs return a machine to an uncertain state of wear rather than the pristine state, and also used Whittle indices with discounted costs.  \cite{Ayesta2021} developed computational methods based on general expressions for Whittle indices in Markovian bandit models and, as a special case, considered a machine repairman problem in continuous time under the average cost criterion.  \cite{Demirci2025} have also applied the Whittle indices approach to a capacity-constrained maintenance problem. The restless bandits approach has some advantages in that, by considering a decomposition of the problem based on Lagrangian relaxation, one can easily extend the analyses to the case of multiple repairers.  On the other hand, the aforementioned studies do not consider switching times (interruptible or otherwise) and indeed bandit-type models become much less tractable in general when the assumption of immediate, costless switching is removed.}

\textcolor{black}{In network security applications, our paper's paradigm of stochastically degrading machines requiring resource allocation extends naturally to systems of connected devices, where vulnerabilities accumulate over time and must be addressed through patching, monitoring, or other defensive measures. Many examples of such problems can be found in the area of cybersecurity. \cite{Arora2010}, \cite{Ioannidis2012} \cite{Dey2015} and \cite{Fielder2016} all develop stochastic models for information systems in which security risks intensify over time, although they do not use MDP formulations like ours. \cite{Zebrowski2022} consider the problem of mitigating cyber threats to cyber-physical systems and propose a Bayesian framework, which allows for different attack scenarios, to optimize allocation of resources across vulnerable components. \cite{Mookerjee2023} develop a stochastic dynamic programming formulation for a problem with partially observable vulnerabilities, in which decisions must be made as to when the system will be inspected. They show that optimal policies follow a threshold structure, with one (lower) threshold associated with the decision to inspect a vulnerability before fixing the problem and another (higher) threshold for implementing a fix directly. \cite{Medina2025} use stochastic optimization via attack graphs to construct integrated cybersecurity systems, employing two-stage stochastic programming for countermeasure allocation under budget constraints. \cite{Luo2025} propose an automatic detection model, formulated as an MDP, that uses automatic learning strategies to respond to evolving attack patterns and improve detection accuracy. Within the cybersecurity literature, many authors also study partially observable stochastic games (POSGs) for patch strategy optimization, based on the principle that defenders cannot fully observe attacker states or intentions, and may be unaware of the full environmental state. 
Recent work by \cite{Horak2023} has provided a comprehensive theoretical analysis of zero-sum one-sided POSGs, in which only one agent has imperfect information; however, extending these results to more general multi-agent settings remains a difficult task. Deep reinforcement learning (DRL) methods can be applied to security games with high-dimensional state spaces (\cite{Zhang2021}), but these approaches face computational challenges, including difficulties in converging to stable solutions (\cite{Hernandez2019}). Our paper assumes that machine degradation occurs through exogenous stochastic processes rather than strategic adversarial behavior, but we regard the consideration of strategic attack patterns as a promising direction for future research.}

\textcolor{black}{In the field of healthcare applications, the home healthcare literature explicitly addresses routing and scheduling of mobile service providers in order to treat patients whose conditions worsen over time. Several interesting parallels with our work can be found within this area. \cite{Nikzad2021} develop matheuristic algorithms for stochastic home health care planning, with uncertain service times and travel times between patient locations included in their optimization framework. \cite{Cire2022} use approximate dynamic programming (ADP) methods to address the dynamic allocation of home care patients to medical providers, assuming that patients are heterogeneous with respect to health requirements, service durations and regions of residence. \cite{Naderi2023} develop integrated approaches for staffing, assignment, routing and scheduling under uncertainty and propose a mixed-integer program (MIP) with a special structure that allows for a Benders decomposition. \cite{Nasir2024} consider stochastic models for home care transportation with dynamically prioritized patients and propose a chance-constrained multi-period optimization model, in which patient priorities are translated into time-dependent healthcare costs. 
\cite{Khorasanian2024} also study dynamic home care routing within an MDP framework and consider uncertainty in the number of visits per patient referral. 
Emergency department and hospital settings provide additional healthcare applications involving patient queues and resource allocation. \cite{Patrick2008} develop MDP models for the dynamic scheduling of patients with different priorities to diagnostic facilities, with an objective based on waiting time targets. \cite{Lee2020} apply DRL to emergency department patient scheduling, using an MDP model that accounts for diverse patient acuity levels and personalized treatment plans. In the context of our paper, the relevant themes in the healthcare studies that we have cited include stochastically evolving demands across multiple facilities, limited service capacity, and the need to allocate resources dynamically in response to costs that become steeper as demand remains unresolved.}

\textcolor{black}{Infrastructure monitoring systems can be regarded as another important application domain. In these systems, critical assets are often classified into discrete condition grades based on inspection criteria, and limited maintenance resources must be allocated dynamically. Bridges, roads and other infrastructure naturally degrade over time due to traffic loads, environmental factors, and ageing, and this creates systems with natural finite-state representations and stochastic deterioration processes. Several relevant studies can be found in the recent literature. \cite{Tao2020} consider the maintenance of highway bridges and formulate a model that innovatively combines discrete-time and continuous-time MDPs (based on different possible causes of degradation) to find optimal maintenance policies. 
\cite{Xu2022} also consider bridge maintenance and begin by using a Q-learning algorithm to optimize the maintenance strategy for a single bridge, then use integer programming to optimize budget allocation for a network of bridges. 
In related work, \cite{Xu2022b} employ hazard-based approaches to optimize both inspection intervals and repair decisions via semi-Markov decision processes (SMDPs).
\cite{Xin2023} also apply SMDPs to bridge element maintenance, using Weibull distributions for state duration modeling and choosing between maintenance strategies based on specific defect types and development states. \cite{Liu2025} have also studied dynamic optimization methods for bridge networks under budget constraints. Recent work by \cite{Fairley2025} investigates the optimal design of failure-prone systems, with first-stage strategic planning decisions used to decide the number of components (which could represent transport links, for example) to install in a stochastic system that will be maintained dynamically in the second stage.}

\textcolor{black}{In summary, this section has shown that problems involving dynamic allocation of limited resources to stochastically degrading systems can be found across diverse application domains. Common modeling elements include stochastic arrival and service processes, finite-state representations of system conditions, cost functions that increase with unserviced demand or degradation levels, and explicit consideration of travel or transition times between service locations.}\\


\section{Problem formulation}\label{sec:formulation}

Let $G=(V,E)$ be a connected graph, referred to as a \emph{network}, where $V$ and $E$ are the sets of nodes and edges respectively. Let $M\subseteq V$ denote a subset of nodes referred to as \emph{machines}, and let $N:=V\setminus M$ denote the set of other nodes, referred to as \emph{intermediate stages}. We use $m:=|M|$ (resp. $n:=|N|$) to denote the number of machines (resp. intermediate stages). As a convention we label the machines $1,2,...,m$, while the stages in $N$ are labeled $m+1,m+2,...,m+n$.

At any given time, machine $i\in M$ is in state $x_i\in\{0,1,...,K_i\}$, where $0$ is the pristine or `as-good-as-new' state and $K_i$ is the `failed' state. When machine $i$ is in state $x_i\leq K_i-1$, transitions to state $x_i+1$ occur at an exponential rate $\lambda_i>0$; this is the \emph{degradation rate} for machine $i$. All machines in $M$ are assumed to degrade independently of each other. In addition, a cost $f_i(x_i)$ is incurred per unit time while machine $i$ is in state $x_i$. We assume that the functions $f_i$ are strictly increasing and that $f_i(0)=0$ for each $i\in M$. 

There is a single repairer who occupies a single node in $V$ at any given time. At each point in time, the repairer can decide either to remain at the currently-occupied node or attempt to move to one of the adjacent nodes (connected by edges to the current node). In the latter case, it must also choose which node to move to. Therefore, if the number of adjacent nodes is $l$ then the number of possible decisions for the repairer is $l+1$. If it remains at a node $i\in M$ (i.e. at a machine) and $x_i\geq 1$ then it is assumed to be \emph{repairing} the machine, in which case the state of the machine transitions from $x_i$ to $x_i-1$ at a rate $\mu_i>0$, where $\mu_i$ is the \emph{repair rate} for machine $i$. We note that degradations can still happen at a machine while it's being repaired, so that $\lambda_i+\mu_i$ is the total transition rate for a non-failed machine $i$ while it is under repair. If the repairer remains at a node $i$ with $x_i=0$, then the repairer is said to be \emph{idle}. Similarly, idleness can also occur if the repairer chooses to remain at some stage $j\in N$ rather than attempting to move. On the other hand, if the repairer chooses to move to an adjacent node, then it moves to that node at an exponential rate $\tau>0$, referred to as the \emph{switching rate}. Repair times and switching times are assumed to be independent of the degradation processes for machines in $M$. We assume that the switching rate is the same between any adjacent pair of nodes, so that the expected time to move from one node to another is proportional to the number of edges that must be traversed.  This implies that the amount of time to switch from one machine to another is distributed as a sum of i.i.d. exponential random variables and therefore follows an Erlang distribution.

The above assumptions allow the system to be formulated as a finite-state, continuous-time Markov decision process (MDP). Herein, we will follow similar notational conventions to those in \cite{tian2025}. Let $S$ be the state space, given by
$$S:=\left\{(i,(x_1,...,x_m))\;|\;i\in V,\;x_i\in\{0,1,...,K_i\}\text{ for }i\in M\right\},$$
where $i$ indicates the node currently occupied by the repairer. As a notational convention, we will tend to use $i$ for the repairer's current location throughout this paper, while $j$ will be used more generally for nodes in $V$. Also, $k$ will be used to represent the condition of an individual machine, which we will sometimes refer to as the state of the machine (not to be confused with the system state). 
Given that the repairer cannot be repairing and switching at the same time (and also cannot be carrying out more than one repair), the sum of the transition rates under any state cannot exceed $\sum_{j\in M}\lambda_j+\max\{\mu_1,...,\mu_m,\tau\}$ and we can therefore use the technique of uniformization (\cite{Lippman1975,Serfozo1979}) to consider an equivalent discrete-time formulation which evolves in time steps of size $\Delta:=\left(\sum_{j\in M}\lambda_j+\max\{\mu_1,...,\mu_m,\tau\}\right)^{-1}$.

For each state $\vec{x}=(i,(x_1,...,x_m))\in S$, let $R(\vec{x})$ denote the set of nodes adjacent to node $i$ and let $A_{\vec{x}}=\{i\}\cup R(\vec{x})$ denote the set of actions available under state $\vec{x}$. We can interpret action $a\in A_{\vec{x}}$ as the node that the repairer attempts to move to next, given that it is currently in state $\vec{x}$ (with $a=i$ indicating that the repairer remains at the current node).  Then, for two distinct states $\vec{x}:=(i,(x_1,...,x_m))$ and $\vec{x}':=(i',(x_1',...,x_m'))$, the transition probability of moving from state $\vec{x}$ to $\vec{x}'$ in the uniformized MDP can be expressed as
\begin{equation}p_{\vec{x},\vec{x}'}(a):=\begin{cases}\lambda_j\Delta,&\text{ if }i'=i,\;x_j'=x_j+1\text{ for some }j\in M\text{ and }x_l'=x_l\text{ for }l\neq j,\\[8pt]
\mu_i\Delta,&\text{ if }i'=i,\;x_i'=x_i-1,\;x_j'=x_j\text{ for }j\neq i\text{ and }a=i,\\[8pt]
\tau\Delta,&\text{ if }i'\in R(\vec{x}),\;x_j'=x_j\text{ for all }j\in M\text{ and }a=i',\\[8pt]
0,&\text{ otherwise.}\end{cases}.\label{trans_probs}\end{equation}
The first line in (\ref{trans_probs}) corresponds to a degradation event at machine $j$, the second line corresponds to a completed (partial) repair at the repairer's current node $i$ and the third line represents switching from node $i$ to $i'$. We also have $p_{\vec{x},\vec{x}}(a)=1-\sum_{\vec{y} \neq \vec{x}}p_{\vec{x},\vec{y}}(a)$ as the probability of remaining in the same state. Since the units of time are arbitrary, in the rest of the paper we will assume $\Delta=1$ without loss of generality. This implies that we can use quantities such as $\lambda_j$, $\mu_j$, $\tau$ to represent transition probabilities in the uniformized MDP.

For each time step that the system spends in state $\vec{x}$ a cost $c(\vec{x})$ is incurred, given by
\begin{equation}c(\vec{x})=\sum_{i\in M} f_i(x_i).\label{cost_eqn1}\end{equation}
Let $\theta$ denote the decision-making policy to be followed, i.e. the method of choosing actions in our MDP. 
If the same action $\theta(\vec{x})\in A_{\vec{x}}$ is chosen every time the system is in state $\vec{x}\in S$, then the policy is referred to as \emph{stationary} and \emph{deterministic} (\cite{Puterman1994}). The expected long-run average cost (or \emph{average cost} for short) under policy $\theta$, given that the system begins in state $\vec{x}_0\in S$, is
\begin{equation}g_\theta(\vec{x})=\liminf_{t\rightarrow\infty}t^{-1}\mathbb{E}_\theta \left[\sum_{n=0}^{t-1} c(\vec{x}_n)\;\Big |\;\vec{x}_0=\vec{x}\right],\label{avg_cost}\end{equation}
where $\vec{x}_n$ is the system state at time step $n\in\mathbb{N}_0$. The theory of uniformization implies that if $\theta$ is a stationary policy then the average cost $g_\theta(\vec{x})$ in the discretized system is equal to the average cost per unit time incurred under the same policy in the continuous-time system (in which we suppose that actions are chosen at the beginning of each new sojourn, i.e. every time the system transitions from one state to another).  The objective of the problem is to find a policy $\theta^*$ such that
$$g_{\theta^*}(\vec{x})\leq g_\theta(\vec{x})\;\;\;\;\forall \theta\in \Theta,\;\vec{x}\in S,$$
where $\Theta$ is the set of all admissible policies.  In other words, we aim to find a policy that minimizes the average cost.

Since $S$ is finite, the question of system stability does not arise. However, our MDP formulation does have some particular subtleties. For one thing, we cannot claim that the MDP is \emph{unichain} (a common property of MDPs based on queueing systems, for example), since one can easily define a stationary policy $\theta$ under which the resulting Markov chain has more than one positive recurrent class. The simplest way to do this is to define the actions $\theta(\vec{x})$ in such a way that the repairer always remains at whichever node it currently occupies. This policy allows all machines except one (if the machine is initially located at a node in $M$) to remain permanently in their failed states. The average cost under such a policy is finite, but clearly depends on the initial state. However, we can show that our MDP belongs to the `communicating' class of multichain MDP models, as discussed in \cite{Puterman1994} (p. 348), from which it follows that the average cost under an optimal policy must be independent of the initial state. We state this formally below and provide a proof in Appendix \ref{AppA}.
\begin{proposition}Let $\theta^*$ be an optimal policy. Then the average cost $g_{\theta^*}(\vec{x})$ is independent of the initial state $\vec{x}$, and can be expressed as a constant $g^*$.\label{prop1}
\end{proposition}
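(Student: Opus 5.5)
The plan is to show that the uniformized MDP is \emph{communicating} in the sense of \cite{Puterman1994}: for every ordered pair of states $\vec{x},\vec{y}\in S$ there is a stationary deterministic policy under which $\vec{y}$ is reachable from $\vec{x}$ with positive probability in finitely many steps. Once this is established, we invoke the standard result for finite-state, finite-action communicating MDPs under the average cost criterion (\cite{Puterman1994}, Theorem 8.3.2 and Section 9.5). That result says the optimal average cost is constant across initial states and is attained by a stationary deterministic policy, which gives $g_{\theta^*}(\vec{x})=g^*$ for all $\vec{x}$. Since $S$ and each $A_{\vec{x}}$ are finite and costs are bounded, the only thing left to verify is the communicating property.

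\textbf{Construction of the path.} Fix $\vec{x}=(i,(x_1,\dots,x_m))$ and $\vec{y}=(i',(y_1,\dots,y_m))$. I will build a finite sequence of positive-probability transitions from $\vec{x}$ to $\vec{y}$ in three phases.
\begin{enumerate}[(i)]
\item \emph{Repair phase.} Visit every machine $j\in M$ in turn. The network $G$ is connected, so a path through edges exists, and each move along an edge occurs with probability $\tau>0$ when the neighbouring node is chosen. At each machine, choose to remain there and let repairs (probability $\mu_j>0$ per step) bring $x_j$ down to $0$. Other machines may degrade meanwhile, but we only need one positive-probability trajectory. Along it we simply assume no degradation events occur during this phase. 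This is legitimate because under action $a$ the self-loop and other transitions have the stated probabilities, and the specific transitions we use (a switch or a repair) each have positive probability.
\item \emph{Degradation phase.} The system is now at some state with all $x_j=0$. Move along a path in $G$ to node $i'$, again assuming no degradations occur. Then, with the repairer choosing to remain at $i'$, let degradation events (probability $\lambda_j>0$) raise each machine $j$ to $y_j$, one event at a time.
\item \emph{The case $i'\in M$.} Here remaining at $i'$ means repairing, but a degradation of machine $i'$ still occurs with probability $\lambda_{i'}>0$, so raising $x_{i'}$ to $y_{i'}$ remains possible.
\end{enumerate}
Concatenating these steps gives a finite path of positive probability. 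The actions used along it depend on the phase, not only on the state, so some states may be visited more than once with different desired actions. To handle this, I will use the definition of communicating that only requires, for each pair of states, \emph{some} stationary deterministic policy under which the target is accessible from the source. I then argue by transitivity: accessibility of $\vec{y}$ from $\vec{x}$ under a sequence of one-step choices can be realised by a stationary policy. The simplest way is to note that a shortest positive-probability path visits each state at most once, so one can assign its action consistently. Alternatively, one can show each one-step move is achievable and use that the accessibility relation for communicating MDPs is equivalently defined via the union of all actions' transition graphs.

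\textbf{Main obstacle.} The delicate step is precisely this last point: turning the history-dependent path into a stationary deterministic policy, and checking that degradations at machines other than the one being repaired do not break the argument. I would handle it via the shortest-path argument, since a shortest path in the graph whose edges are all positive-probability transitions under some action has no repeated states. The remaining step is to cite the correct theorem in \cite{Puterman1994} for finite communicating models. That theorem yields a stationary optimal policy with constant gain and hence the constant $g^*$.
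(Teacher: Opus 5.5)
Your proposal is correct and follows essentially the same route as the paper. You establish the communicating property via a positive-probability path (repair everything with no degradations, travel to $i'$, then let degradations build up the target configuration) and then invoke Theorem 8.3.2 of \cite{Puterman1994}. The only difference is how stationarity is secured: the paper writes down a single explicit stationary policy (head for the most-damaged machine, or towards $i'$ when all machines are pristine), so it needs no conversion step. Your shortest-path argument in the union transition graph instead extracts a stationary policy from a history-dependent path, which is equally valid.
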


The theory in (\cite{Puterman1994} then implies that there exists a function $h:S\rightarrow\mathbb{R}$ satisfying the optimality equations
\begin{equation}g^*+h(\vec{x})=c(\vec{x})+\min_{a\in A_{\vec{x}}}\left\{\sum_{\vec{y}\in S}p_{\vec{x},\vec{y}}(a)h(\vec{y})\right\}\;\;\;\;\;\;\forall \vec{x}\in S.\label{opt_eqs}\end{equation}
\textcolor{black}{In theory, we can use dynamic programming (DP) techniques to compute a constant $g^*$ and function $h$ satisfying the optimality equations (\ref{opt_eqs}). DP can be summarized as a framework for solving MDPs via the recursive solution of subproblems using Bellman's optimality principle (\cite{Bellman1954}), which essentially states that the optimal solution of a multi-stage problem is composed of optimal solutions to smaller subproblems. Common DP solution algorithms include value iteration, which iteratively updates estimates of the value function $h$ using Bellman's optimality equation, and policy iteration, which involves alternating between evaluating a fixed policy and improving it via a greedy improvement step.} These algorithms are made more complicated by the existence of policies with multichain structure, but the `communicating' property ensures their theoretical feasibility (see \cite{Puterman1994}, pp. 478-484). Although DP algorithms are useful for finding optimal solutions in small problem instances, they become computationally intractable as $S$ increases in size. We therefore propose the use of index heuristics and reinforcement learning in Sections \ref{sec:index} and \ref{sec:polimp} respectively in order to find strong-performing policies in systems with large state spaces.

Before closing this section, we present a short example to show that the finite-state nature of our problem prevents direct application of certain results that have been proved for infinite-state problems with similar formulations. As mentioned in our introduction, \cite{Duenyas1996} considers an infinite-state problem in which a server must be allocated dynamically between different queues. Their formulation is different from ours in several respects; for example, it assumes that switching and service times are non-interruptible. They also assume linear holding costs and, under this assumption, are able to show that different queues can be assigned priorities based on the values of $c_i\mu_i$, where $c_i$ is the linear holding cost for customers at queue $i$ and $\mu_i$ is the service rate. More specifically, they show that any queue $i$ belonging to the set $\argmax_i \{c_i\mu_i\}$ can be defined as a `top-priority' queue and, under an optimal policy, the server should never switch away from such a queue if it is non-empty. The next example, which considers the case of linear holding costs, shows that the analogous property does not hold in our system.

\begin{example}Consider a network consisting of only 2 nodes connected by a single edge.  Both nodes are machines; that is, $M=\{1,2\}$.  We assume that $\lambda_1=\lambda_2=0.4$, $K_1=K_2=2$ and $f_1(x)=f_2(x)=x$ for $x\in\{0,1,2\}$; that is, the degradation rates for both machines are the same, they both have 3 possible states (with state 2 being the `failed' state) and they both have the same cost function, which is a linear function of the degree of degradation.  The repair rates are $\mu_1=1.1$ and $\mu_2=1$, so machine 1 can be repaired slightly faster than machine 2.  The switch rate is $\tau=100$.  Table \ref{counterex_table} shows an optimal policy for this system, computed using dynamic programming. To clarify, part (a) of Table \ref{counterex_table} shows the decisions specified by the optimal policy at the 9 possible states where the repairer is at machine 1, and part (b) shows the decisions specified at the 9 possible states where the repairer is at machine 2.

\begin{table}[htbp]
    \captionsetup{font=normalsize} 
    \caption{Decisions made under an optimal policy for Example \ref{example1}}
    \vspace{5pt}
    \centering
    \begin{minipage}{.5\textwidth}
        \centering
        
        \captionsetup{font=footnotesize} 
        \subcaption{when the repairer is at machine 1}
        \begin{tabular}{@{\extracolsep{1pt}}lccc} 
            \hline 
             & \multicolumn{1}{c}{$x_2=0$} & \multicolumn{1}{c}{$x_2=1$} & \multicolumn{1}{c}{$x_2=2$}\\ 
            \hline 
            $x_1=0$ & 1 & 2 & 2 \\ 
            $x_1=1$ & 1 & 1 & 1\\ 
            $x_1=2$ & 1 & \fcolorbox{red}{white}{2} & 1\\ 
            \hline
        \end{tabular} 
    \end{minipage}%
    \begin{minipage}{.5\textwidth}
        \centering
        \captionsetup{font=footnotesize} 
        \subcaption{when the repairer is at machine 2}
        \begin{tabular}{@{\extracolsep{1pt}}lccc} 
            \hline 
            &\multicolumn{1}{c}{$x_2=0$} & \multicolumn{1}{c}{$x_2=1$} & \multicolumn{1}{c}{$x_2=2$}\\ 
            \hline 
            $x_1=0$ & 1 & 2 & 2\\ 
            $x_1=1$ & 1 & 1 & 1\\ 
            $x_1=2$ & 1 & 2 & 1 \\ 
            \hline
        \end{tabular} 
    \end{minipage}
\label{counterex_table}\end{table}

Under the optimal policy, the decision under state $\vec{x}=(1,(2,1))$ is to switch to machine 2. Thus, the repairer moves away from machine 1 despite the fact that $c_1\mu_1>c_2\mu_2$ and machine 1 requires repair ($x_1>0$).  This shows that in a finite-state problem, the `top-priority' rule described in (\cite{Duenyas1996}) no longer applies.\\
\label{example1}\end{example} 

\section{Index heuristic}\label{sec:index}

In order to develop an index-based heuristic for our dynamic repair and maintenance problem, we begin by proving that the cost function (\ref{cost_eqn1}) is equivalent (in terms of average cost incurred under a fixed stationary policy) to an alternative function that focuses attention on the repairer's current location.  For a general state $\vec{x}=(i,(x_1,...,x_m))\in S$ and action $a\in A_{\vec{x}}$, let $\tilde{c}(\vec{x},a)$ be defined as follows:
\begin{equation}\tilde{c}(\vec{x},a)=\begin{cases}\sum\limits_{j\in M}f_j(K_j)-\dfrac{\mu_i}{\lambda_i}[f_i(K_i)-f_i(x_i-1)],&\text{ if }i\in M,\;x_i\geq 1\text{ and }a=i,\\[16pt]
\sum\limits_{j\in M}f_j(K_j),&\text{ otherwise.}\end{cases}\label{cost_eqn2}\end{equation}
Notably, the function $\tilde{c}$ (unlike the original cost function $c$) depends on both the state $\vec{x}$ and the action $a$ chosen under $\vec{x}$.  We observe that $\tilde{c}(\vec{x},a)$ includes a negative term if and only if the repairer is located at a damaged machine $i$ and chooses to perform repairs. This negative term increases in magnitude as the machine approaches the pristine state. The next result establishes the equivalence between the cost functions $c$ and $\tilde{c}$ under stationary policies. We provide a proof in Appendix \ref{AppB}.

\begin{proposition}Let $\theta$ be a stationary policy, and let 
\begin{equation}\tilde{g}_\theta(\vec{x}):=\liminf_{t\rightarrow\infty}t^{-1}\mathbb{E}_\theta \left[\sum_{n=0}^{t-1} \tilde{c}(\vec{x}_n,\theta(\vec{x}_n))\;\Big |\;\vec{x}_0=\vec{x}\right],\;\;\;\;\vec{x}\in S,\label{avg_cost2}\end{equation}
where $\vec{x}_n$ is the system state at time step $n\in\mathbb{N}_0$.  That is, $\tilde{g}_\theta(\vec{x})$ is the average cost under policy $\theta$ given that we replace the cost function $c(\vec{x})$ by $\tilde{c}(\vec{x},a)$.  Then
\begin{equation}\tilde{g}_\theta(\vec{x})=g_\theta(\vec{x})\;\;\;\;\forall\vec{x}\in S,\label{prop2_eqn}\end{equation}
where $g_\theta(\vec{x})$ is the average cost defined in (\ref{avg_cost}).
\label{prop2}\end{proposition}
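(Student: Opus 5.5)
The plan is a potential-function (telescoping) argument. I would construct a bounded function $\phi:S\rightarrow\mathbb{R}$ such that, for every state $\vec{x}$ and action $a\in A_{\vec{x}}$,
\begin{equation*}
\tilde{c}(\vec{x},a)-c(\vec{x})=\sum_{\vec{y}\in S}p_{\vec{x},\vec{y}}(a)\phi(\vec{y})-\phi(\vec{x}).
\end{equation*}
Given such a $\phi$, the two cost streams differ in expectation only by a telescoping sum. Since $S$ is finite, $\phi$ is bounded, so the difference vanishes after dividing by $t$.

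The first step is to build $\phi$ machine by machine. Take $\phi(\vec{x})=\sum_{j\in M}\phi_j(x_j)$, which does not depend on the repairer's location, with
\begin{equation*}
\phi_j(k)=\frac{1}{\lambda_j}\sum_{l=0}^{k-1}\left[f_j(K_j)-f_j(l)\right],\qquad k\in\{0,1,\dots,K_j\}.
\end{equation*}
I would then verify the identity using (\ref{trans_probs}) with $\Delta=1$, splitting the expected one-step change of $\phi$ by event type.
\begin{itemize}
\item \emph{Degradation at machine $j$.} If $x_j<K_j$, this occurs with probability $\lambda_j$ and contributes $\lambda_j\cdot\lambda_j^{-1}[f_j(K_j)-f_j(x_j)]=f_j(K_j)-f_j(x_j)$. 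If $x_j=K_j$, the contribution is $0$, which equals $f_j(K_j)-f_j(K_j)$, so no special case is needed.
\item \emph{Repair.} This occurs only if $i\in M$, $x_i\geq1$ and $a=i$, with probability $\mu_i$. It contributes $-\frac{\mu_i}{\lambda_i}[f_i(K_i)-f_i(x_i-1)]$.
\item \emph{Switching and self-transitions.} These leave the machine states unchanged, so they contribute nothing.
\end{itemize}
Summing over $j\in M$ gives $\sum_j f_j(K_j)-c(\vec{x})$ minus the repair term when it is present. This is exactly $\tilde{c}(\vec{x},a)-c(\vec{x})$ by (\ref{cost_eqn1}) and (\ref{cost_eqn2}).

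The second step applies this along the chain. For a stationary $\theta$, the Markov property gives $\mathbb{E}_\theta[\phi(\vec{x}_{n+1})\mid\vec{x}_n]=\sum_{\vec{y}}p_{\vec{x}_n,\vec{y}}(\theta(\vec{x}_n))\phi(\vec{y})$. Taking expectations and summing over $n=0,\dots,t-1$ yields
\begin{equation*}
\mathbb{E}_\theta\left[\sum_{n=0}^{t-1}\tilde{c}(\vec{x}_n,\theta(\vec{x}_n))\right]-\mathbb{E}_\theta\left[\sum_{n=0}^{t-1}c(\vec{x}_n)\right]=\mathbb{E}_\theta[\phi(\vec{x}_t)]-\phi(\vec{x}),
\end{equation*}
with both expectations conditioned on $\vec{x}_0=\vec{x}$. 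The right-hand side is bounded in absolute value by $2\max_S|\phi|<\infty$.

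The final step divides by $t$. The two normalized sequences then differ by a quantity tending to $0$, so their $\liminf$s coincide, which gives (\ref{prop2_eqn}). This step needs no unichain or communicating assumption, which is why the equivalence holds for every stationary policy, including multichain ones, and for every initial state. The main obstacle is really only guessing the right $\phi$. The boundary cases, namely $x_j=K_j$ (no degradation) and $x_i=0$ or $a\neq i$ (no repair), must match the two cases in (\ref{cost_eqn2}), and the choice of $\phi_j$ above makes them match automatically.
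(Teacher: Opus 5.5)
Your proof is correct, and it takes a different route from the paper's. The paper writes $g_\theta(\vec{x})=\sum_{\vec{y}}\pi_\theta(\vec{x},\vec{y})c(\vec{y})$ via the limiting matrix $P^*_\theta$ and reduces (\ref{prop2_eqn}) to one identity per machine $j$. It proves that identity from the balance equations $\lambda_j\sum_{\vec{y}:y_j=k}\pi_\theta(\vec{x},\vec{y})=\mu_j\sum_{\vec{y}\in G_{j,k+1}}\pi_\theta(\vec{x},\vec{y})$ (long-run up-crossings of level $k$ equal down-crossings), followed by an algebraic rearrangement. You instead show that $\tilde{c}(\cdot,a)-c$ is a potential difference and then telescope.

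The two arguments are closely linked. The balance equations are what $P^*_\theta P_\theta=P^*_\theta$ gives when tested against $\mathbb{I}(y_j>k)$. Your $\phi_j(y_j)$ is exactly $\sum_{k<K_j}\lambda_j^{-1}[f_j(K_j)-f_j(k)]\,\mathbb{I}(y_j>k)$. So you carry out the paper's summation by parts once, on the one-step kernel rather than on the limiting distribution.

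Your route gains several things:
\begin{itemize}
\item It needs no limiting matrix.
\item It never has to justify balance equations for a possibly multichain, non-ergodic $P_\theta$; the paper appeals to ergodic chains at that point.
\item It gives an explicit finite-horizon estimate: the expected total costs differ by at most $\max_S\phi$, uniformly in $t$.
\item Because the identity holds action by action, it extends verbatim to history-dependent or randomized policies and to $\limsup$ criteria. So $g_\sigma=\sum_j f_j(K_j)-u_\sigma$ holds for every admissible policy $\sigma$, not only stationary ones.
\end{itemize}

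Your $\phi$ also makes the paper's informal intuition paragraph rigorous. For every non-repair action $a$ it satisfies $\sum_j f_j(K_j)+\phi(\vec{x})=c(\vec{x})+\sum_{\vec{y}}p_{\vec{x},\vec{y}}(a)\phi(\vec{y})$. It is therefore the relative value function of a never-repair policy, and $\tilde{c}(\vec{x},a)$ is that policy's average cost minus the one-step advantage of $a$. The paper's argument, in exchange, reads the reward term directly as a statement about long-run state frequencies.
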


The intuition for Proposition \ref{prop2} is as follows.  If the repairer adopts a completely passive policy and never repairs any machines, then the long-run average cost is trivially equal to $\sum_{j\in M}f_j(K_j)$. Suppose we modify the passive policy by choosing a specific state $\vec{x}=(i,(x_1,...,x_m))$ with $x_i\geq 1$, and specifying that the repairer should always choose to repair machine $i$ under this state. Given that the repairer chooses this action at a particular time step, there is a probability $\mu_i$ that the machine's state transitions to $x_i-1$ at the next time step. Following this transition, we obtain a cost rate advantage of $[f_i(x_i)-f_i(x_i-1)]$ compared to the passive policy (under which the machine would have remained in state $x_i$). This advantage persists until the machine next degrades, which is expected to happen $1/\lambda_i$ time steps later. At that point, the cost rate advantage becomes $[f_i(x_i+1)-f_i(x_i)]$ and this also continues for an expected $1/\lambda_i$ time steps. By considering a sequence of further degradations until machine $i$ eventually degrades to state $K_i$, we can quantify the advantage of choosing the repair action under state $\vec{x}$ as $\mu_i[f_i(K_i)-f_i(x_i-1)]/\lambda_i$. It follows that this quantity should be deducted from the average cost of the passive policy every time the repair action is chosen at $\vec{x}$. In Appendix \ref{AppB} we give a proof using more rigorous arguments, by considering the detailed balance equations of the continuous-time Markov chain induced by a fixed stationary policy. 

Proposition \ref{prop2} enables us to consider an equivalent MDP formulation based on maximizing rewards, rather than minimizing costs.  The reward under a particular state $\vec{x}\in S$ is equal to the absolute value of the negative term in (\ref{cost_eqn2}) if the repairer performs a repair under $\vec{x}$, and zero otherwise.  The next result formalizes this equivalence.

\begin{corollary}Consider an MDP formulation in which the single-step reward for being in state $\vec{x}=(i,(x_1,...,x_m))$ and choosing action $a\in A_{\vec{x}}$, denoted by $r(\vec{x},a)$, is given by
\begin{equation}r(\vec{x},a)=\begin{cases}\dfrac{\mu_i}{\lambda_i}[f_i(K_i)-f_i(x_i-1)],&\text{ if }i\in M,\;x_i\geq 1\text{ and }a=i,\\[16pt]
0,&\text{ otherwise,}\end{cases}\label{reward_eqn}\end{equation}
and the objective is to maximize the long-run average reward, defined as 
\begin{equation}u_\theta(\vec{x}):=\limsup_{t\rightarrow\infty}t^{-1}\mathbb{E}_\theta \left[\sum_{n=0}^{t-1} r(\vec{x}_n,\theta(\vec{x}_n))\;\Big |\;\vec{x}_0=\vec{x}\right],\;\;\;\;\vec{x}\in S.\label{avg_reward}\end{equation}
All other aspects of the MDP formulation, including states, actions and transition probabilities, are the same as described in Section \ref{sec:formulation}.  Then any stationary policy which maximizes the long-run average reward (\ref{avg_reward}) in the new MDP formulation also minimizes the long-run average cost (\ref{avg_cost}) in the previous MDP formulation.
\label{corollary}\end{corollary}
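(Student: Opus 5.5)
The plan is to reduce the corollary to Proposition~\ref{prop2} through an affine identity between the reward $r$ and the modified cost $\tilde{c}$. Write $C:=\sum_{j\in M}f_j(K_j)$. Comparing (\ref{cost_eqn2}) with (\ref{reward_eqn}) case by case gives
$$\tilde{c}(\vec{x},a)=C-r(\vec{x},a)\qquad\text{for all }\vec{x}\in S,\;a\in A_{\vec{x}}.$$
In the repair case ($i\in M$, $x_i\geq 1$, $a=i$) the subtracted term in $\tilde{c}$ is exactly $r$. In every other case $\tilde{c}=C$ and $r=0$.

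Next, fix a stationary policy $\theta$ and an initial state $\vec{x}$. Summing the identity along the trajectory gives
$$t^{-1}\mathbb{E}_\theta\Big[\sum_{n=0}^{t-1}r(\vec{x}_n,\theta(\vec{x}_n))\,\Big|\,\vec{x}_0=\vec{x}\Big]=C-t^{-1}\mathbb{E}_\theta\Big[\sum_{n=0}^{t-1}\tilde{c}(\vec{x}_n,\theta(\vec{x}_n))\,\Big|\,\vec{x}_0=\vec{x}\Big].$$
Since $\limsup_t(C-a_t)=C-\liminf_t a_t$, this yields $u_\theta(\vec{x})=C-\tilde{g}_\theta(\vec{x})$. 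Proposition~\ref{prop2} then gives
$$u_\theta(\vec{x})=C-g_\theta(\vec{x})\qquad\text{for every }\vec{x}\in S.$$
Consequently, for stationary policies, maximising $u_\theta$ pointwise in $\vec{x}$ is the same as minimising $g_\theta$ pointwise.

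The remaining step, and the only real subtlety, is that the original objective asks for optimality over all admissible policies $\Theta$, not merely over stationary ones. Proposition~\ref{prop2} only covers stationary $\theta$, so the identity cannot be used to compare against history-dependent policies. To bridge this gap I would invoke the standard result for finite-state, finite-action MDPs (Puterman, Ch.~9), which applies here because the model is communicating, as shown in the proof of Proposition~\ref{prop1}. That result guarantees a stationary deterministic policy $\theta^*$ with $g_{\theta^*}(\vec{x})=g^*\le g_{\theta}(\vec{x})$ for all $\theta\in\Theta$ and all $\vec{x}$.

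Now let $\hat\theta$ be a stationary policy maximising (\ref{avg_reward}). Whether it maximises over all policies or only over stationary ones, it satisfies $u_{\hat\theta}(\vec{x})\ge u_{\theta^*}(\vec{x})$. By the identity above this becomes $g_{\hat\theta}(\vec{x})\le g_{\theta^*}(\vec{x})=g^*$. Hence $\hat\theta$ attains the optimal average cost from every initial state and minimises (\ref{avg_cost}) over $\Theta$. If randomized stationary policies are to be allowed, I would check that the balance-equation argument behind Proposition~\ref{prop2} extends with $\tilde{c}$ replaced by its expectation under the randomization; otherwise the statement is restricted to deterministic stationary policies, as in the paper's definition.
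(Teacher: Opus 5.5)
Your proposal is correct and follows the paper's own argument. The identity $\tilde{c}(\vec{x},a)=\sum_{j\in M}f_j(K_j)-r(\vec{x},a)$ gives $\tilde{g}_\theta=\sum_{j\in M}f_j(K_j)-u_\theta$, and Proposition~\ref{prop2} then transfers this to $g_\theta$; your explicit $\limsup$/$\liminf$ conversion and your treatment of non-stationary competitors, via existence of a stationary deterministic average-optimal policy (which holds in any finite-state, finite-action MDP, so the communicating property is not actually needed), fill in details the paper leaves implicit.
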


\textit{Proof. }By Proposition \ref{prop2} it is sufficient to show that maximizing the long-run average reward (\ref{avg_reward}) is equivalent to minimizing the modified average cost (\ref{avg_cost2}) in which the alternative cost function $\tilde{c}(\vec{x},a)$ is used.  Indeed, the cost function $\tilde{c}$ defined in (\ref{cost_eqn2}) includes a constant term $\sum_{j\in M}f_j(K_j)$ that is incurred under all states.  It follows that $\tilde{g}_\theta(\vec{x})=\sum_{j\in M}f_j(K_j)-u_\theta(\vec{x})$, and hence minimizing $\tilde{g}_\theta(\vec{x})$ is equivalent to maximizing $u_\theta(\vec{x})$.  \hfill $\Box$\\

Recall that, by Proposition \ref{prop1}, the optimal average cost $g^*$ is independent of the initial state. Therefore Corollary \ref{corollary} implies that the optimal average reward $u^*=\sup_\theta u_\theta(\vec{x})$ also has no dependence on $\vec{x}$. 

The main advantage of using the reward-based formulation (\ref{avg_reward}) is that it allows the reward under a particular state $\vec{x}$ to be computed purely in terms of the parameters for the repairer's current location $i$ (with the reward being zero if $i$ is an intermediate stage, rather than a machine), whereas the cost formulation (\ref{avg_cost}) involves aggregating cost rates over all of the damaged machines under a particular state. This greatly assists the development of an index-based heuristic policy. A similar approach was used by (\cite{Duenyas1996}), who formulated a problem based on minimizing average costs but then developed an index heuristic based on maximizing rewards. In the infinite-state problem considered by (\cite{Duenyas1996}), the reward function takes the much simpler form $c_i\mu_i$ (where, in their notation, $c_i$ is the linear cost rate associated with queue $i$).  The reward function (\ref{avg_reward}) in our model is more complicated as it depends on the level of degradation, $x_i$, and also on the degradation rate $\lambda_i$.

We now proceed to describe the development of an index-based heuristic for our MDP, focusing on the single-step reward function $r(\vec{x},a)$ rather than the cost function $c(\vec{x})$ from Section \ref{sec:formulation}. The basic principle underlying the heuristic is that, under any given state $\vec{x}\in S$, the repairer calculates `average reward rates' (referred to as \emph{indices}) associated with different possible actions, and then chooses an action that maximizes the index. These indices are calculated over a short time horizon, by assuming that the repairer will commit to fully repairing the next machine that it visits. More specifically, if the repairer is located at a machine $i\in M$ then the index for remaining at node $i$ is calculated by assuming that the machine remains there until $x_i=0$, whereas the index for switching to an alternative node $j\neq i$ is calculated by assuming that the machine moves directly to node $j$ and then remains there until $x_j=0$. If the repairer is at an intermediate stage $i\in N$ then the procedure is similar, except we only calculate indices for moving to machines in $j\in M$ and do not calculate an index for remaining at $i$. These indices inform the action chosen by the repairer at the current time step, but the indices are recalculated at every time step, so (for example) the repairer might begin moving towards a particular machine $j$ but change direction and move towards a different machine $l$ if the latter machine experiences a degradation event.


Firstly, for a particular machine $i\in M$, let $T_i(k)$ denote the random amount of time taken for the machine to return to its pristine state, given that it begins in state $k\in\{0,1,...,K_i\}$ and the repairer begins to repair it immediately and continues without any interruption.  Also, let $R_i(k)$ denote the cumulative reward earned while these repairs are in progress, i.e. until the machine returns to the pristine state.  We define both $T_i(0)$ and $R_i(0)$ to be zero.  Expressions for $\mathbb{E}[T_i(k)]$ and $\mathbb{E}[R_i(k)]$ for $k\geq 1$ can be derived quite easily in terms of the system parameters, by considering the dynamics of an $M/M/1$ queue with finite capacity. These expressions are given in the next lemma.

\begin{lemma}For each machine $i\in M$ and $k\in\{1,...,K_i\}$, we have
\begin{equation}\mathbb{E}[R_i(k)]=\sum_{p=1}^{K_i}C_{ip}(k)s_i(k),\;\;\;\;\;\;\;\;\mathbb{E}[T_i(k)]=\sum_{p=1}^{K_i}C_{ip}(k),\label{lemma_equation}\end{equation}
where $s_i(k):=\mu_i[f_i(K_i)-f_i(k-1)]/\lambda_i$ is the reward rate in state $k$ and the coefficients $C_{ip}(k)$ are given by
$$C_{ip}(k)=\begin{cases}\dfrac{1}{\mu_i^p}\;\mathlarger{\sum}\limits_{r=0}^{p-1}\lambda_i^{p-1-r}\mu_i^r,&p=1,2,...,k,\\[20pt]
\dfrac{1}{\mu_i^p}\;\mathlarger{\sum}\limits_{r=0}^{k-1}\lambda_i^{p-1-r}\mu_i^r,&p=k+1,k+2,...,K_i.\end{cases}$$
\label{lemma1}\end{lemma}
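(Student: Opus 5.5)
The plan is to interpret $C_{ip}(k)$ as the expected total time machine $i$ spends in state $p$ before first reaching state $0$. The machine starts in state $k$ and is repaired without interruption. We then compute these occupation times using a cut (flow-balance) argument for a birth–death chain.

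During uninterrupted repair, the state of machine $i$ evolves as a birth–death process on $\{0,1,\dots,K_i\}$. It moves up at rate $\lambda_i$ from states $p\le K_i-1$ and down at rate $\mu_i$ from states $p\ge 1$. It is absorbed on first hitting $0$, which happens almost surely and with finite expected time, since the chain is finite and $0$ is reachable from every state. Hence
\[
\mathbb{E}[T_i(k)]=\sum_{p=1}^{K_i}O_p(k),\qquad \mathbb{E}[R_i(k)]=\sum_{p=1}^{K_i}O_p(k)\,s_i(p),
\]
where $O_p(k)$ is the expected occupation time of state $p$. The second identity holds because the reward rate while repairing in state $p$ is $r=\frac{\mu_i}{\lambda_i}[f_i(K_i)-f_i(p-1)]=s_i(p)$. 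This indicates that the factor in (\ref{lemma_equation}) should read $s_i(p)$ rather than $s_i(k)$. I will prove it in that form and note the indexing. By uniformization with $\Delta=1$, expected numbers of time steps coincide with expected continuous times, so working in continuous time suffices. It therefore remains to show $O_p(k)=C_{ip}(k)$.

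Because the chain is skip-free downward, I decompose the passage from $k$ to $0$ into successive first-passages $j\to j-1$ for $j=k,k-1,\dots,1$. Let $O^{(j)}_p$ be the expected time spent in $p$ during the passage from $j$ to $j-1$. During this passage the chain stays in $\{j,\dots,K_i\}$, so $O^{(j)}_p=0$ for $p<j$.

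For $p\ge j$, I will use the fact that the expected number of jumps of each type out of state $p$ equals the corresponding rate times the expected time in $p$. This follows from Poisson thinning, or from Wald's identity applied to the exponential holding times. The number of crossings of each edge can then be counted.
\begin{itemize}
\item The edge $j\to j-1$ is crossed exactly once, so $\mu_i O^{(j)}_j=1$.
\item For $j<p\le K_i$, the edge between $p-1$ and $p$ is crossed upward and downward equally often, since the passage starts and ends below it. This gives $\lambda_i O^{(j)}_{p-1}=\mu_i O^{(j)}_p$, and the relation remains valid at the boundary $p=K_i$.
\end{itemize}
Solving the recursion gives $O^{(j)}_p=\lambda_i^{p-j}/\mu_i^{p-j+1}$ for $p\ge j$.

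Summing over the passages,
\[
O_p(k)=\sum_{j=1}^{\min(k,p)}\frac{\lambda_i^{p-j}}{\mu_i^{p-j+1}}.
\]
With the substitution $r=j-1$, this becomes $\mu_i^{-p}\sum_{r=0}^{\min(k,p)-1}\lambda_i^{p-1-r}\mu_i^{r}$. This is exactly $C_{ip}(k)$, where the upper limit is $p-1$ when $p\le k$ and $k-1$ when $p>k$.

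The main obstacle is making the flow-balance step rigorous, in particular justifying that the expected crossing counts are finite and equal to rate $\times$ occupation. I would handle this with a standard martingale argument on the counting processes, compensating each by rate times indicator time. Finiteness of $\mathbb{E}[T_i(k)]$ allows optional stopping at the hitting time. As a fallback, the same formulas can be verified directly: substitute them into the first-step linear equations for $O_p(\cdot)$ and check them by induction on $k$.
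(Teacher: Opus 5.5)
Your argument is correct, and it reaches the formula by a different route from the paper's.

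The paper uses first-step analysis. It writes the $K_i$ linear equations $\mathbb{E}[R_i(k)]=\frac{s_i(k)}{\lambda_i+\mu_i}+\frac{\lambda_i}{\lambda_i+\mu_i}\mathbb{E}[R_i(k+1)]+\frac{\mu_i}{\lambda_i+\mu_i}\mathbb{E}[R_i(k-1)]$ for $1\le k\le K_i-1$, together with $\mathbb{E}[R_i(K_i)]=s_i(K_i)/\mu_i+\mathbb{E}[R_i(K_i-1)]$, and the same system with $s_i\equiv 1$ for $T_i$. It then simply asserts that solving them yields the stated expressions, so the closed form is left to be checked by substitution. That is essentially your fallback.

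You instead compute expected occupation times directly. The downward skip-free structure and the strong Markov property split the path from $k$ to $0$ into successive passages $j\to j-1$. On each passage, the edge-crossing identities $\mu_i O^{(j)}_j=1$ and $\lambda_i O^{(j)}_{p-1}=\mu_i O^{(j)}_p$ give $O^{(j)}_p=\lambda_i^{p-j}/\mu_i^{p-j+1}$ without solving any linear system.

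What your route buys:
\begin{itemize}
\item It explains the coefficients, $C_{ip}(k)=\sum_{j=1}^{\min(k,p)}\lambda_i^{p-j}\mu_i^{-(p-j+1)}$, which is exactly where the two cases $p\le k$ and $p>k$ come from.
\item It handles any state-dependent reward rate in one stroke.
\end{itemize}
The price is justifying that expected jump counts equal rate times expected occupation time. You correctly flag this, and it is routine here (Wald per visit, or the compensator argument), since the number of jumps before absorption in this finite chain has a geometric tail.

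Your remark on indexing is also right, and it is consistent with the paper's own proof. The paper's system charges $s_i(k)$ while the machine is in state $k$, so its solution weights $C_{ip}(k)$ by $s_i(p)$. For example, with $K_i=2$ and $k=1$ it gives $\mathbb{E}[R_i(1)]=s_i(1)/\mu_i+\lambda_i s_i(2)/\mu_i^2$. This differs from $s_i(1)\,\mathbb{E}[T_i(1)]$ because $s_i(2)<s_i(1)$. So the factor $s_i(k)$ in the displayed statement should read $s_i(p)$.
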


\textit{Proof. }Given that the repairer performs uninterrupted repairs at machine $i$, standard arguments based on the dynamics of continuous-time Markov chains imply that
\begin{equation}\mathbb{E}[R_i(k)]=\begin{cases}\dfrac{s_i(k)}{\lambda_i+\mu_i}+\dfrac{\lambda_i}{\lambda_i+\mu_i}\mathbb{E}[R_i(k+1)]+\dfrac{\mu_i}{\lambda_i+\mu_i}\mathbb{E}[R_i(k-1)],&k=1,2,...,K_i-1,\\[16pt]
\dfrac{s_i(K_i)}{\mu_i}+\mathbb{E}[R_i(K_i-1)],&k=K_i.\end{cases}\label{lemma_proof_eqn}\end{equation}
Thus, we have a system of $K_i$ equations in $K_i$ unknowns and can solve these to find the values of $\mathbb{E}[R_i(1)],...,\mathbb{E}[R_i(K_i)]$ stated in (\ref{lemma_equation}).

For the $\mathbb{E}[T_i(k)]$ values, the equations (\ref{lemma_proof_eqn}) take exactly the same form except that we replace each $R_i(k)$ by $T_i(k)$ and each reward rate $s_i(k)$ should be replaced by $1$.  The equations can then be solved to give the results stated in (\ref{lemma_equation}).\hfill $\Box$\\

Suppose the system is in state $\vec{x}=(i,(x_1,...,x_m))$, where $i\in M$. We define the index for remaining at machine $i$, denoted by $\Phi_i^{\text{stay}}(x_i)$, as
\begin{equation}\Phi_i^{\text{stay}}(x_i)=\begin{cases}
\dfrac{\mathbb{E}[R_i(x_i)]}{\mathbb{E}[T_i(x_i)]},&\text{ if }x_i\in\{1,...,K_i\},\\[12pt]
0,&\text{ if }x_i=0.\end{cases}\label{stay_index}\end{equation}

Next, let the repairer's current location $i$ be an arbitrary node in $V$, and let $D_{ij}$ denote the random amount of time needed for the repairer to move from node $i$ to a machine $j\neq i$, assuming that it follows a shortest path in the network. We define the index for moving to machine $j$, denoted by $\Phi_{ij}^{\text{move}}(x_j)$, as
\begin{equation}\Phi_{ij}^{\text{move}}(x_j):=\sum_{k=x_j}^{K_j}\mathbb{P}(X_j=k)\;\frac{\mathbb{E}[R_j(k)]}{\mathbb{E}[D_{ij}|X_j=k]+\mathbb{E}[T_j(k)]},\label{move_index}\end{equation}
where $X_j$ is the (random) state of machine $j$ upon arrival of the repairer at node $j$, suppressing the dependence on $x_j$ for convenience. 
We note that $X_j$ is distributed as a sum of i.i.d. geometric random variables and therefore follows a truncated version of the negative binomial distribution, as follows:
$$\mathbb{P}(X_j=k)=\begin{cases}\dbinom{d_{ij}+k-x_j-1}{d_{ij}-1}\left(\dfrac{\tau}{\lambda_j+\tau}\right)^{d_{ij}}\left(\dfrac{\lambda_j}{\lambda_j+\tau}\right)^{k-x_j},&k=x_j,...,K_j-1,\\[16pt]
1-\mathlarger{\sum}\limits_{r=x_j}^{K_j-1} \mathbb{P}(X_j=r),&k=K_j,\end{cases}$$
where $d_{ij}$ is the number of nodes that the repairer must traverse on a shortest path from $i$ to $j$. For $k=x_j,...,K_j-1$ we have
$$\mathbb{E}[D_{ij}|X_j=k]=\frac{d_{ij}+k-x_j}{\tau+\lambda_j},$$
while for $k=K_j$ we use the law of total expectation, which implies that
$$\mathbb{E}[D_{ij}]=\frac{d_{ij}}{\tau}=\sum_{k=x_j}^{K_j}\mathbb{P}(X_j=k)\mathbb{E}[D_{ij}|X_j=k],$$
and hence
$$\mathbb{E}[D_{ij}|X_j=K_j]=\frac{(d_{ij}/\tau)-\sum_{k=x_j}^{K_j-1}\mathbb{P}(X_j=k)\mathbb{E}[D_{ij}|X_j=k]}{1-\sum_{k=x_j}^{K_j-1}\mathbb{P}(X_j=k)}.$$

We may interpret the index $\Phi_i^{\text{stay}}(x_i)$ as an approximate measure of the average reward per unit time that the repairer can obtain by remaining at machine $i$ until $x_i=0$, while $\Phi_{ij}^{\text{move}}(x_j)$ is the approximate average reward per unit time for switching to machine $j$ and remaining there until $x_j=0$. These are only approximate measures because, in fact, random variables such as $R_j(k)$ and $T_j(k)$ are not independent of each other and so the quotients that appear in (\ref{stay_index}) and (\ref{move_index}) cannot be regarded as exact closed forms for the average rewards that would be obtained during the relevant time periods.

There is one further index quantity that should be defined. Suppose the system is in state $\vec{x}=(i,(x_1,...,x_m))$ and let $\Phi_{ij}^{\text{wait}}(x_j)$ be defined in a similar way to $\Phi_{ij}^{\text{move}}(x_j)$ except that it represents the (approximate) average reward that the repairer would gain by waiting for one further degradation event to occur at machine $j\neq i$, then carrying out the same actions as in the case of the previous index (that is, switching to $j$ and repairing it until $x_j=0$). If $x_j=K_j$ then no further degradations at $j$ are possible, but we still suppose that the repairer waits for $1/\lambda_j$ time units before moving to $j$. Specifically, we define
\begin{align}\Phi_{ij}^{\text{wait}}(x_j):=&\sum_{k=x_j}^{K_j-1}\mathbb{P}(X_j=k)\;\frac{\mathbb{E}[R_j(k+1)]}{1/\lambda_j+\mathbb{E}[D_{ij}|X_j=k]+\mathbb{E}[T_j(k+1)]},\nonumber\\
&\;\;+\mathbb{P}(X_j=K_j)\;\frac{\mathbb{E}[R_j(K_j)]}{1/\lambda_j+\mathbb{E}[D_{ij}|X_j=K_j]+\mathbb{E}[T_j(K_j)]}.\label{wait_index}\end{align}
Note that in (\ref{wait_index}), $X_j$ has the same definition as before (i.e. the state of machine $j$ when the repairer arrives, given that the repairer begins moving immediately), but we calculate the index using $R_j(k+1)$ and $T_j(k+1)$ rather than $R_j(k)$ and $T_j(k)$ because the repairer waits for an extra degradation event before beginning to move. If $\Phi_{ij}^{\text{move}}(x_j)<\Phi_{ij}^{\text{wait}}(x_j)$ then we can interpret this as an incentive for the repairer to wait longer before moving to node $j$, since the occurrence of a further degradation event will increase the average reward that can be earned by switching to $j$ and repairing the machine fully. Note that if $x_j=K_j$ then it is guaranteed that $\Phi_{ij}^{\text{move}}(x_j)>\Phi_{ij}^{\text{wait}}(x_j)$, so there is no incentive to wait in that case. In our index heuristic, we prevent the repairer from switching from machine $i$ to machine $j$ if $\Phi_{ij}^{\text{move}}(x_j)<\Phi_{ij}^{\text{wait}}(x_j)$.

Before defining the index heuristic, we discuss how it should make decisions under states with $x_j=0$ for all $j\in M$.  States of this form are quite different from others in $S$, as the repairer does not have any work to do.  However, one might suppose that the repairer should still take some kind of preparatory action in order to ensure that it is able to respond to any future jobs in the most efficient manner.  Accordingly, we aim to identify a particular node in the network that can be regarded as an `optimal position' for the repairer while it is waiting for new tasks to materialize. Specifically, for each node $i\in V$, let $\Psi(i)$ be defined as follows:


{\begin{equation}\Psi(i):=\sum_{j\in M}\left(\dfrac{\lambda_j}{\lambda_1+...+\lambda_m}\right)\mathbb{E}[D_{ij}],\label{new_psi_formula}\end{equation}
Thus, $\Psi(i)$ is the expected amount of time spent moving to the next machine that degrades, where the expectation is taken with respect to which machine this will be. We define the repairer's optimal idling position as the node $i\in V$ that minimizes $\Psi(i)$.  If there are multiple such nodes, then an arbitrary choice of idling position can be made.

The next definition specifies how actions are chosen by the index heuristic.

{\begin{definition}(Index policy.) Let $\theta^{\text{IND}}$ be a stationary policy that uses the following rules to select an action under state $\vec{x}=(i,(x_1,...,x_m))\in S$:\\
\vspace{-3mm}
\begin{enumerate}
\item If $x_j=0$ for all $j\in M$ (that is, all machines are in the pristine condition) then let $i^*\in\argmin_{i\in V}\Psi(i)$. The repairer's action should be to move to the first node on the shortest path from $i$ to $i^*$ (if $i\neq i^*$), or to remain at $i^*$ (if $i=i^*$).
\item If $x_j\geq 1$ for at least one $j\in M$, then consider two possible cases:
\begin{enumerate}
\item If $i\in M$ (that is, the repairer is at a machine) then let the set $J$ be defined by
\begin{equation}J:=\left\{j\in M\setminus \{i\}\;:\;\Phi_{ij}^{\textup{move}}(x_j)\geq \Phi_{ij}^{\textup{wait}}(x_j)\right\}.\label{J_formula}\end{equation}
If $J$ is non-empty, then let $j^*$ be the machine in $J$ that maximizes $\Phi_{ij}^{\textup{move}}(x_j)$. If $\Phi_{ij^*}^{\textup{move}}(x_{j^*})>\Phi_i^{\textup{stay}}(x_i)$ then the repairer's action should be to move to the first node on the shortest path from $i$ to $j^*$. On the other hand, if either $J$ is empty or $\Phi_{ij^*}^{\textup{move}}(x_{j^*})\leq\Phi_i^{\textup{stay}}(x_i)$, then the repairer's action should be to remain at node $i$.
\item If $i\in N$ (that is, the repairer is at an intermediate stage), then let $j^*$ be the machine that maximizes $\Phi_{ij}^{\textup{move}}(x_j)$.  The repairer's action should be to move to the first node on the shortest path from $i$ to $j^*$.
\end{enumerate}
%
\end{enumerate}
\vspace{3mm}
In cases where two or more actions are viable choices according to the above rules, we assume that the action chosen under $\theta^{\text{IND}}$ is based on a pre-determined priority ordering of the nodes in $V$.  We refer to $\theta^{\text{IND}}$ as the index heuristic policy.
\label{index_definition}\end{definition}

It is clearly of interest to investigate how well the index heuristic policy performs in comparison to an optimal policy, and also how well it performs relative to other heuristic policies. In Section \ref{sec:numerical} we provide results from numerical experiments in order to demonstrate its performance empirically. Theoretical performance guarantees (e.g. suboptimality bounds) are difficult to establish in the full generality of our model.  However, we are able to show that the index policy is optimal in certain special cases of our problem. These special cases are discussed in the next two propositions. 

\begin{proposition}Suppose the following 3 conditions are satisfied:
\begin{enumerate}[(i)]
\item All machines are directly connected to each other; that is, $V$ is a complete graph.
\item Each machine has only two possible states; that is, $K_j=1$ for each $j\in M$.
\item The degradation rates, repair rates and cost functions are the same for all machines. That is, we have $\lambda_1=...=\lambda_m$, $\mu_1=...=\mu_m$ and $f_1(1)=...=f_m(1)$. 
\end{enumerate}
Then the index policy is optimal.\label{prop3}\end{proposition}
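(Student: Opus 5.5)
The plan is to work with the reward formulation of Corollary \ref{corollary} and exploit symmetry. Under conditions (i)--(iii), every state can be summarised by a pair $(b,y)$. Here $b\in\{0,1\}$ records whether the repairer's current machine is broken, and $y$ is the number of \emph{other} broken machines. I read condition (i) as meaning that $V=M$ (no intermediate stages), so every switch takes one edge. A reward is earned only when the repairer stays at a broken machine, and it is then a constant rate $s:=\mu f(1)/\lambda$. So maximising average reward is the same as maximising the long-run fraction of time spent actively repairing.

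The first step is to identify what $\theta^{\text{IND}}$ does in this setting.
\begin{itemize}
\item With $K_j=1$, Lemma \ref{lemma1} gives $\Phi_i^{\text{stay}}(1)=s$.
\item Each $\Phi^{\text{move}}_{ij}(1)$ is strictly smaller than $s$, because the travel time inflates the denominator. So at a broken machine the repairer stays.
\item At a pristine machine, $\Phi^{\text{stay}}=0$. Also $x_j=K_j$ guarantees $\Phi^{\text{move}}_{ij}(1)>\Phi^{\text{wait}}_{ij}(1)$. I will check that $\Phi^{\text{move}}_{ij}(1)>\Phi^{\text{move}}_{ij}(0)$, so the repairer moves to some broken machine.
\item If all machines are pristine, $\Psi(i)$ is the same for every machine by symmetry, so the idling position is the current machine and the repairer stays.
\end{itemize}
Thus $\theta^{\text{IND}}$ is the ``greedy'' policy: repair if the current machine is broken, otherwise switch to a broken machine if one exists, otherwise idle.

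The second step is to show that this greedy policy satisfies the average-reward optimality equations, the reward analogue of (\ref{opt_eqs}). I would do this by induction on value iteration for the reduced chain in $(b,y)$, or for the full chain if the symmetry reduction needs care with multichain issues. Let $v_t(b,y)$ be the maximal expected $t$-step reward. The competing actions give the following transitions.
\begin{itemize}
\item At $(1,y)$: switching to another broken machine leads to $(1,y)$ again with no reward. Switching to a pristine machine leads to $(0,y+1)$. Staying earns $s$ and lets the repair complete, moving the state to $(0,y)$.
\item At $(0,y)$: moving to a broken machine leads to $(1,y-1)$. Moving to a pristine machine or staying leaves the state at $(0,y)$, apart from degradations.
\end{itemize}
Hence it suffices to establish, for every $t$, the following inequalities:
\begin{enumerate}[(a)]
\item $v_t(1,y-1)\ge v_t(0,y)$, so that being at a broken machine is at least as good as being one switch away from it;
\item $s+\mu\,[v_t(0,y)-v_t(1,y)]+\tau\,[v_t(1,y)-v_t(0,y+1)]\ge 0$ and $s+\mu\,[v_t(0,y)-v_t(1,y)]\ge 0$, which is what comparing ``stay'' with the two switching options at $(1,y)$ reduces to after uniformization.
\end{enumerate}
Because of (a), the term multiplied by $\tau$ in (b) is nonnegative, so (b) reduces to the bound $v_t(1,y)-v_t(0,y)\le s/\mu$. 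I would prove (a) and this bound jointly by induction on $t$. In the inductive step I would expand $v_{t+1}$ using the greedy action in the better state and any (possibly suboptimal) action in the worse one. Then I would match degradation, repair and switching events term by term, using the uniformized probabilities $\lambda,\mu,\tau$ with $\Delta=1$.

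The main obstacle is the bound $v_t(1,y)-v_t(0,y)\le s/\mu$, together with the boundary cases $y=0$ and $y=m-1$. There the symmetric degradation terms do not match exactly, because a pristine current machine can degrade while a broken one cannot. If the direct induction becomes unwieldy, my fallback is a sample-path coupling. Start one copy in $(1,y)$ and another in $(0,y)$, with the second copy following the first's actions shifted by one repair completion. Then show that the reward lead of the first copy is at most one repair's worth, which has expected value $s/\mu$. Once (a) and (b) hold for all $t$, letting $t\to\infty$ and using Proposition \ref{prop1} (the optimal gain is constant) shows that the greedy actions attain the maximum in the optimality equations. So $\theta^{\text{IND}}$ is optimal for the reward criterion, and by Corollary \ref{corollary} it is also optimal for the original cost criterion.
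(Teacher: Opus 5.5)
Your proposal is correct and is essentially the paper's argument. Both identify $\theta^{\text{IND}}$ as the greedy rule and then show, by induction on finite-horizon value iteration, that its actions are optimal at every stage. Your reduction to $(b,y)$ plays the role of the paper's symmetry property (WN1), and your inequality (a) is (WN3). Your bound $v_t(1,y)-v_t(0,y)\le s/\mu$ is what the monotonicity property (WN2) becomes when you work with rewards instead of the paper's costs.

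Your joint induction closes directly, so the coupling fallback is unnecessary. This works because the self-loop coefficients $1-(m-y)\lambda-\tau$ and $1-(m-y)\lambda-\mu$ are nonnegative under $m\lambda+\max\{\mu,\tau\}=1$. Two small points. In (a) you must bound the \emph{maximum} at $(0,y)$, not an arbitrary action there. Also, with priority tie-breaking the idling position need not be the current machine; this is harmless, since all actions tie at $(0,0)$.
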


The proof of Proposition \ref{prop3} can be found in Appendix \ref{AppC}. This proposition describes a homogeneous type of problem in which all machines have the same parameter values, and furthermore assumes that the state of a machine is a binary variable, so that it is effectively either `working' or `not working'. In this type of binary model, the cost function $f_j(k)$ for machine $j\in M$ is determined by a single positive number, $f_j(1)$. While binary models such as these might appear quite simplistic, they are still \textcolor{black}{potentially relevant to real-world applications and have been studied in the field of reliability modeling (see \cite{Lagos2020,Cancela2022}),} and we include them in our numerical experiments in Section \ref{sec:numerical}. Another \textcolor{black}{important condition} in Proposition \ref{prop3} is that all nodes are connected to each other, so that we can assume there are no intermediate stages (as if the network has intermediate stages, these will clearly never be visited under an optimal policy). 

\textcolor{black}{We have been able to confirm using experiments that all three conditions in Proposition \ref{prop3} are necessary; that is, if any of them are omitted then the index policy may not be optimal. Details of these experiments can be found in Appendix \ref{Appagainstprop3}. However, if condition (i) is relaxed (that is, machines are no longer directly connected to each other) then the index policy may still be optimal, subject to another condition on the system parameters. To illustrate this we introduce another type of network, referred to as a `star network', which we define below.}

\begin{definition}Suppose there exists an intermediate stage $s\in N$ such that, for any two distinct machines $i,j\in M$, the unique shortest path from $i$ to $j$ is a path of length $2r$ (where $r\in\mathbb{N}$) that passes through node $s$. Then the network $V$ is called a \emph{star network}, with $s$ and $r$ referred to as the \emph{center} and \emph{radius} respectively.\label{star_def}\end{definition}

In a star network, all machines are equidistant from each other (see Figure \ref{star_figure}). The next proposition shows that the index policy is optimal in a star network under certain conditions, including two of the conditions from Proposition \ref{prop3}.

\begin{proposition}\textcolor{black}{Let $V$ be a star network with radius $r\geq 1$, and suppose that conditions (ii) and (iii) from Proposition \ref{prop3} hold.} Then the index policy is optimal, provided that
\begin{equation}\tau>2r\lambda,\label{prop4_eqn}\end{equation}
where $\lambda$ is the common degradation rate for all machines.\label{prop4}\end{proposition}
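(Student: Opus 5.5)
The plan is to verify directly that $\theta^{\text{IND}}$ satisfies the average-reward optimality equations, using the reward formulation of Corollary \ref{corollary}. By Corollary \ref{corollary} it is enough to show that $\theta^{\text{IND}}$ maximizes the long-run average reward $u_\theta$. Because the MDP is communicating (Proposition \ref{prop1}), it then suffices to exhibit a constant $u$ and a function $h$ with
$$u+h(\vec{x})=\max_{a\in A_{\vec{x}}}\Big\{r(\vec{x},a)+\sum_{\vec{y}}p_{\vec{x},\vec{y}}(a)h(\vec{y})\Big\}\quad\forall\vec{x}\in S,$$
where the maximum is attained by $\theta^{\text{IND}}(\vec{x})$. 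I will take $(u,h)$ to be the gain and relative value of $\theta^{\text{IND}}$ itself, which is unichain. The equations then reduce to a collection of one-step comparison inequalities between $h$-values at neighbouring states.

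\textbf{Step 1: what the index policy does here.} With $K_j=1$ and homogeneous parameters, the reward is $\mu f(1)/\lambda$ whenever the repairer repairs a failed machine. From Definition \ref{index_definition} the actions are as follows.
\begin{itemize}
\item At a failed machine the repairer stays, since the stay index exceeds every move index because of the travel time.
\item At a repaired machine it moves along the shortest path to a failed machine if one exists. Here the wait test is vacuous, since $x_j=K_j$ gives $\Phi^{\text{move}}>\Phi^{\text{wait}}$.
\item At an intermediate stage it heads to the nearest failed machine. This means continuing outward if the machine on its own spoke is failed, and otherwise heading inward toward the center $s$.
\item If all machines are pristine it moves toward $\argmin\Psi$. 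I need to compute $\Psi$ explicitly: $\Psi(s)=r/\tau$, while $\Psi(j)=\frac{m-1}{m}\cdot\frac{2r}{\tau}$ for $j\in M$, and intermediate points along a spoke interpolate between these. Hence the center is the idling position, with a tie when $m=2$.
\end{itemize}

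\textbf{Step 2: reduce the state by symmetry.} Exploiting the symmetry of the star, I will reduce the state to three components: the repairer's distance $d\in\{0,\dots,r\}$ from the center, whether the machine at the end of its current spoke is failed, and the number $n$ of other failed machines. I will then write the evaluation equations for $\theta^{\text{IND}}$ on this reduced chain.

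\textbf{Step 3: the inequalities to check.} The optimality conditions become a finite list of inequalities on differences of $h$:
\begin{enumerate}[(a)]
\item At a failed machine, staying beats stepping inward.
\item On a spoke whose machine is failed, stepping outward beats stepping inward or staying put.
\item On a spoke whose machine is pristine while some other machine is failed, stepping inward beats stepping outward or staying.
\item With all machines pristine, drifting toward the center is optimal.
\end{enumerate}
Rather than solving for $h$ in closed form, I would prove these inequalities by sample-path coupling. I couple the process started one step closer to the target with the process started one step farther away, using the same degradation and switching clocks, and let the farther copy imitate the nearer one until the two positions merge. This yields bounds of the form $h(\text{closer})-h(\text{farther})\ge 0$, together with explicit lower bounds in terms of the expected reward that is lost or delayed.

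\textbf{Main obstacle: the all-pristine and pristine-spoke cases (c)–(d).} Here the repairer must decide between moving inward now and staying near a machine that may fail next. By staying at machine $j$ it gains, with probability of order $1/m$, a saving of $2r$ switches. The cost is being $1$ step farther from each of the other $m-1$ machines. Each forward step takes expected time $1/\tau$, during which degradations of the other machines occur at total rate at most $m\lambda$. Comparing the expected reward delay over the $2r$-step round trip against the benefit of waiting, I expect the balance to reduce to $\tau>2r\lambda$, which is condition (\ref{prop4_eqn}). I would first attempt the case $r=1$ by writing out the reduced chain explicitly to see exactly where (\ref{prop4_eqn}) enters. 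I would then extend to general $r$ by induction on the distance $d$ along a spoke, using the coupling bounds from Step 3 to carry the comparison from one node to the next.
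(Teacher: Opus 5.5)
Your route differs from the paper's. The paper never computes a relative value function. It shows that each index action minimizes the expected time until the repairer next starts a repair, then argues through the decomposition $T_1,S_1,T_2,S_2,\dots$ that such a greedy policy is optimal. Checking that $\theta^{\text{IND}}$'s own gain and bias satisfy the optimality equations is a legitimate sufficient condition. \textbf{However, the proposal has a genuine gap: all the content sits in (c)--(d), which you only conjecture, and your heuristic misplaces where $\tau>2r\lambda$ enters.}

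\textbf{Case (d) is false for $m=2$.} In the all-pristine case, staying at machine $j$ versus stepping inward is, to first order, a comparison of expected hops to the next machine to fail: $\frac{m-1}{m}2r$ versus $\frac1m+\frac{m-1}{m}(2r-1)$, a difference of $(m-2)/m$. This does not involve $\lambda/\tau$ at all. For $m=2$, $\Psi$ is constant on all of $V$ (not merely tied between $s$ and the machines), so the first-order term vanishes. A counterexample: take $r=1$, $\lambda=\mu=f(1)=1$, $\tau=3>2r\lambda$.
\begin{itemize}
\item The policy that repairs at a failed machine, otherwise heads to a failed machine, and idles at $s$ has average cost $70/51$.
\item The policy differing only in staying at a machine when both are pristine has average cost $41/30<70/51$.
\end{itemize}
The two policies differ only at the states $(j,(0,0))$, which are recurrent under the second. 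So the policy-improvement identity makes staying a strict improvement there, relative to the centre-idling policy's own relative values, in either the cost or the reward formulation. Hence (d) cannot be verified. For $m=2$ the result hinges on tie-breaking; the paper's claim that $\Psi$ increases with $d(i,s)$ has the same blind spot. For $m\ge3$ you would still have to show that the centre's $(m-2)/m$-hop advantage beats the second-order effects that decide the $m=2$ case.

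\textbf{Where the hypothesis actually enters, your plan skips it.} First, in Step 1, ``at an intermediate stage it heads to the nearest failed machine'' does not follow from the definition. At $i\in N$ the policy maximizes $\Phi^{\text{move}}_{ij}$ over all machines, pristine ones included, and a nearby pristine machine can win. For example, take $r=2$, $d(i,k)=1$, $x_k=0$, a failed $j$ at distance $3$, and $\tau=\mu=1$, $\lambda=10$. Then $\Phi^{\text{move}}_{ik}(0)=f(1)/23>f(1)/40=\Phi^{\text{move}}_{ij}(1)$. The paper proves $\Phi^{\text{move}}_{ij}(1)\ge\Phi^{\text{move}}_{ik}(0)$ by bounding $\mathbb{P}(X_k=1)$ and $\mathbb{E}[D_{ik}\mid X_k=1]$ and invoking $\tau>2r\lambda$. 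Without that step you may be verifying the wrong policy.

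Second, the hypothesis is what the paper uses in your case (c): the own spoke is pristine and another machine has failed. There the ``farther copy imitates the nearer one'' coupling cannot give a sign, because neither position dominates pathwise. Stepping outward is better on paths where machine $k$ degrades before $j$ could be reached. You need a quantitative comparison between the at most $2r/\tau$ expected travel to the failed machine and the $1/\lambda$ expected wait for $k$ to fail, as in the paper's time-until-repair argument. Nothing in the proposal supplies this.
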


Proof of Proposition \ref{prop4} can be found in Appendix \ref{AppD}. We note that the condition in \ref{prop4_eqn} is equivalent to $2r/\tau<1/\lambda$, which states that the expected amount of time for the repairer to move from one machine to another is smaller than the expected amount of time for a degradation event to occur at a non-degraded machine. \textcolor{black}{This condition should often be satisfied in applications where failures or degradation events occur at a relatively slow rate, so that the repairer can frequently change location without additional machines degrading in the meantime. For example, this situation could arise in computing infrastructures where hardware faults occur on a much slower timescale than communication or switching delays.} Under the stated conditions, the index policy is a `greedy' policy that always prioritizes the nearest degraded machine (or moves towards the center of the network if there are no degraded machines), and we can show that such a policy is also optimal. Please see Appendix \ref{AppD} for full details of the proof.

\begin{figure}[hbtp]
    \begin{center}
        \includegraphics[scale = 0.5]{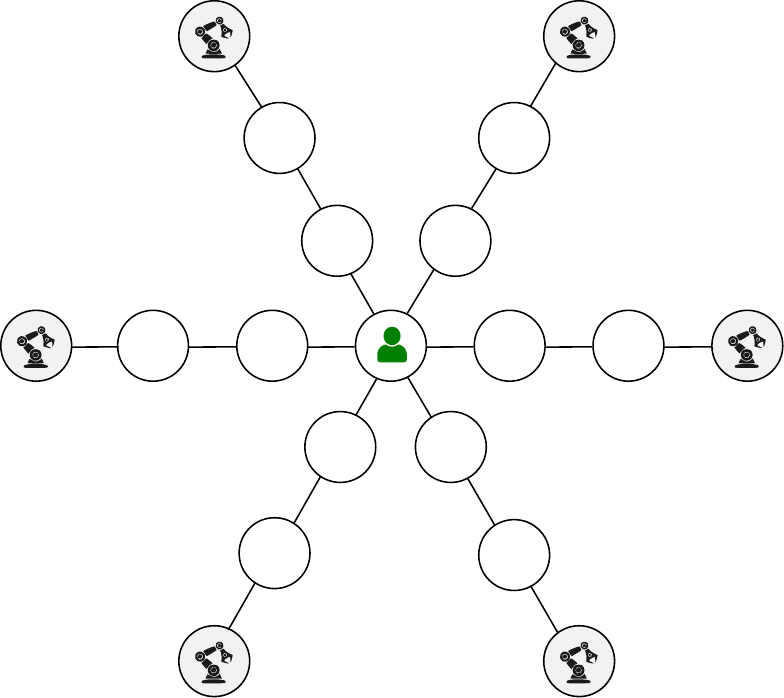}
    \end{center}
    \caption{A star network with 6 machines and a radius $r=3$.}
    \label{star_figure}
\end{figure}

\section{Online policy improvement}\label{sec:polimp}

The index heuristic developed in Section \ref{sec:index} yields a dynamic policy that, at any given time, either chooses a machine to prioritize next (if at least one machine requires repair), or directs the repairer to an optimal idling position (if no machines require repair). Importantly, in the former case, the indices in (\ref{stay_index})-(\ref{wait_index}) are recalculated at each time step, so that the repairer can take advantage of the ability to interrupt switching and repair times as permitted under our problem formulation. However, a weakness of the index heuristic is that it works in a `short-sighted' (myopic) way, as the decision to prioritize a particular machine is made without taking into account that machine's proximity to other machines in the network. It would be preferable to use a more sophisticated approach that takes the network topology into account when making decisions.

As discussed in Section \ref{sec:formulation}, an optimal policy can be characterized by a set of values $\{h(\vec{x})\}_{\vec{x}\in S}$ satisfying the optimality equations in (\ref{opt_eqs}). The well-known method of \emph{policy improvement} (PI), applicable to finite-state MDPs under the average cost criterion, works by starting from an arbitrary base policy and performing successive improvement steps, yielding a sequence of policies that eventually converges to an optimal one (\cite{Puterman1994}). An optimal policy will naturally make long-sighted decisions that depend on the network topology, and any step of PI is therefore helpful in obtaining a policy that avoids short-sighted behavior. Although multiple improvement steps may be needed in order to obtain an optimal policy, it has been observed that the first improvement step often yields the largest improvement (\cite{Tijms2003}). Furthermore, heuristics based on applying one step of policy improvement have been successfully applied to various types of queueing control problems (\cite{krishnan1990,argon2009,shone2020queueing}). 


As stated in Proposition \ref{prop1}, an optimal policy for our system is always unichain. However, the policy given by the index heuristic in Section \ref{sec:index} is not guaranteed to be unichain and indeed we have found that it can have a multichain structure in some problem instances. It will therefore be desirable to introduce a modified version of the index heuristic in which the resulting policy must be unichain, in order to avoid certain complications that arise when applying PI to a multichain base policy. The modified index heuristic is defined below.

\begin{definition}(Modified index policy.) Let $\theta^{\textup{M-IND}}$ denote a stationary policy that uses the following rules to select an action under state $\vec{x}=(i,(x_1,...,x_m))\in S$:\\
\vspace{-3mm}
\begin{enumerate}
\item If $x_j=K_j$ for all $j\in M$ (that is, all machines are at their maximum state of degradation), then let $j^*\in M$ be the smallest-indexed machine satisfying
\begin{equation}j^*\in\argmax_{j\in M}\left\{\frac{\mathbb{E}[R_j(K_j)]}{\mathbb{E}[T_j(K_j)]}\right\}.\label{full_system_node}\end{equation}
Then the action chosen under state $\vec{x}$ should either be to move to the first node on the shortest path from $i$ to $j^*$ (if $i\neq j^*$), or to stay at node $i$ (if $i=j^*$). 
\item Otherwise, the action chosen under state $\vec{x}$ should be the same action given by the unmodified index policy $\theta^{\textup{IND}}$ in Definition \ref{index_definition}.\\
\end{enumerate}
\vspace{-3mm}
\label{modified_index_definition}\end{definition}

Note that under any admissible policy, it is always possible for the system to reach a state with $x_j=K_j$ for all $j\in M$, since this can occur randomly via an unbroken sequence of machine degradations. Under the modified index policy $\theta^{\textup{M-IND}}$, the machine $j^*$ satisfying (\ref{full_system_node}) is unique and has no dependence on the system state $\vec{x}$. Thus, the state $(j^*,(K_1,...,K_m))$ is accessible from any state under $\theta^{\textup{M-IND}}$. This ensures that there cannot be more than one positive recurrent class of states under $\theta^{\textup{M-IND}}$, and therefore the policy is indeed unichain.

In general, the performance of $\theta^{\textup{M-IND}}$ may be worse than that of $\theta^{\textup{IND}}$, since the rule that it uses under states with $x_j=K_j$ for all $j\in M$ is not consistent with the rules used under other states. The only purpose of introducing $\theta^{\textup{M-IND}}$ is so that it can be used as a convenient base policy in our PI method. In the computational experiments that we report in Section \ref{sec:numerical} we use the original definition of the index policy (Definition \ref{index_definition}) when evaluating the performance of the index heuristic, and compare its performance with that of the simulation-based PI method.

\textcolor{black}{In this section we propose an approximate one-step PI method, referred to as \emph{online policy improvement} (OPI) for short, that works by simulating the random evolution of the system in the style of a reinforcement learning (RL) algorithm. RL refers to a broad class of methods in which decision-makers adaptively learn and improve their policies by interacting with a real or simulated environment, gaining experience by observing the costs or rewards from choosing different actions at the states that they encounter (\cite{Sutton2018,Powell2022}). Our OPI approach is divided into two stages, which we call the \emph{offline} part and the \emph{online} part. In the offline part, we repeatedly simulate the evolution of the system under the modified index policy $\theta^{\textup{M-IND}}$ in order to estimate the value function $h_{\theta^{\textup{M-IND}}}(\vec{x})$ under this policy. This closely resembles the method of \emph{temporal difference learning} (TD-learning), discussed extensively in classical RL texts (see \cite{Sutton2018}, pp. 119-138), although our method modifies the classical approach by storing value function estimates only for a limited number of `exemplar' states and allowing each simulation to continue until it reaches one of these exemplar states.} 
Subsequently, in the online part, we simulate the evolution of the system and, at each discrete time step, choose an action by applying an improvement step to the base policy $\theta^{\textup{M-IND}}$. Actions must be selected at the beginning of each time step, but we assume that after an action is selected, the next transition of the system does not occur until a further $\delta$ time units have elapsed (here, $\delta$ represents the length of a time step and is analogous to the uniformization parameter $\Delta$ used in Section \ref{sec:formulation}).
During this $\delta$-length time period we run nested simulation experiments (within the main simulation) with the aim of improving the estimates of $h_{\theta^{\textup{M-IND}}}(\vec{x}')$ for states $\vec{x}'$ that are near the current state $\vec{x}$. The results of the nested online simulations supplement the results of the offline simulations in order to estimate the value function as accurately as possible within a neighborhood of the current state. 
Once the time limit $\delta$ is reached, we use a criterion based on a confidence interval to determine whether to select the best `improving' action identified during the $\delta$-length period, or whether to select the action prescribed by the base policy (the latter option is chosen in situations where we cannot say with sufficient confidence which action is best). 

\textcolor{black}{The method that we use to find improving actions in the online part may be recognized as a type of \emph{rollout} approach, as it involves comparing different actions under the current state by simulating future trajectories following each possible action, assuming that a fixed base policy will be used to choose actions at future time steps. Each simulated trajectory ends when the system reaches a state for which a value function estimate is stored, and the results from all simulated trajectories are aggregated to obtain estimates of the relative future costs under all possible actions. Rollout and direct lookahead methods have been extensively discussed in the recent RL literature (\cite{Bertsekas2020,Powell2022} and offer the ability to make locally optimal decisions via simulation, thereby avoiding the modeling and computational challenges of other RL methods that involve constructing approximate value functions over the entire state space.}

Some additional explanations of how the offline and online parts work are provided in the subsections below. To simplify notation we use $\theta$ rather than $\theta^{\textup{M-IND}}$ to denote the modified index policy, referred to as the base policy. Due to space restrictions, we defer full details of the algorithms (including pseudocode) to Appendix \ref{AppPI}.

\subsection{Offline part}

In the offline part we simulate the evolution of the discrete-time MDP formulated in Section \ref{sec:formulation}. In the preparatory stage, we conduct an initial simulation in order to identify a subset of states in $Z\subset S$ that are visited often under the base policy $\theta$, and also obtain an estimate $\hat{g}_{\theta}$ of the average cost under this policy. We then initialize an array $U$, used to store information about states visited during the simulation. Initially, the array $U$ contains a single state which we denote by $\vec{u}_0$. 

Next, we conduct a large number of variable-length simulations, where each simulation begins from one of the states in $Z$ and continues until the system reaches one of the states in $U$. In general, suppose the $r^{\text{th}}$ variable-length simulation (for $r=1,2,...$) begins from state $\vec{z}_r\in Z$ and ends in state $\vec{u}_r\in U$. Also, let $\vec{x}$ be one of the first $p$ states visited during this simulation, where $p$ is a small integer (we use $p=5$ in our experiments). Then we calculate an estimate of $h_{\theta}(\vec{x})$, denoted by $\hat{h}_\theta^{(s)}(\vec{x})$ (where $s$ is a counter of how many estimates have been obtained for $h_{\theta}(\vec{x})$ so far), as follows:
\begin{equation}\hat{h}_\theta^{(s)}(\vec{x})=C_r(\vec{z}_r,\vec{u}_r)+\hat{h}_\theta(\vec{u}_r)-\hat{g}_\theta T_r(\vec{z}_r,\vec{u}_r),\label{offline_equation}\end{equation}
where $C_r(\vec{z}_r,\vec{u}_r)$ is the total cost incurred during the variable-length simulation, $T_r(\vec{z}_r,\vec{u}_r)$ is the total number of time steps in the simulation, and $\hat{h}_\theta(\vec{u}_r)$ is the latest available estimate of $h_{\theta}(\vec{u_r})$ which is calculated using all of the estimates $h_{\theta}^{(1)}(\vec{u}_r), h_{\theta}^{(2)}(\vec{u}_r)$, etc. Note that equation (\ref{offline_equation}) is derived using the well-known policy evaluation equations in dynamic programming, which are similar to the optimality equations (\ref{opt_eqs}). It can be explained intuitively as follows: in order to obtain an estimate of the relative value of being in state $\vec{x}$, we take the total cost incurred from starting in that state, which is $C_r(\vec{z}_r,\vec{u}_r)+\hat{h}_\theta(\vec{u}_r)$ (where $\hat{h}_\theta(\vec{u}_r)$ represents the `terminal cost' associated with finishing in state $\vec{u}_r$) and subtract the cost that would have been earned if we had incurred the long-run average cost $\hat{g}_\theta$ over $T_r(\vec{z}_r,\vec{u}_r)$ time steps.

After obtaining the new estimate $\hat{h}_\theta^{(s)}(\vec{x})$ we add state $\vec{x}$ to the array $U$ if it is not included already. We also store some other information about $\vec{x}$ in $U$, including the sum of the squared estimates $[\hat{h}_\theta^{(s)}(\vec{x})]^2$, which can be used to calculate a variance; this is needed in the online part of the algorithm (please see Appendix \ref{AppPI} for full details). As the offline part of the algorithm continues, the array $U$ becomes populated with more states and more information about those states. After a large number of iterations, the offline part ends. The intended outcome is that $U$ should include a subset of states that are visted often under the base policy and, for each state $\vec{x}\in U$, an estimate $\hat{h}_\theta(\vec{x})$ which should be a reasonably accurate estimate of the true value $h_\theta(\vec{x})$. We have been able to verify using experiments that the algorithm is able to output estimates $\hat{h}_\theta(\vec{x})$ that are very close to the true values $h_\theta(\vec{x})$ given by dynamic programming.

\subsection{Online part}

In this part we take the array $U$ obtained from the offline part as an input and simulate the evolution of the system again, but instead of using the actions given by the base policy $\theta$, we apply an approximate policy improvement step. This means that if $\vec{x}$ is the current state, we aim to select the action $a$ that minimizes
\begin{equation}\sum_{\vec{y}\in S}p_{\vec{x},\vec{y}}(a)\hat{h}_\theta(\vec{y}),\label{online_eq1}\end{equation}
where $\hat{h}_\theta(\vec{y})$ is the latest available estimate of $h_\theta(\vec{y})$. This implies that if $\vec{x}=(i,(x_1,...,x_m))$ denotes the current state then we require estimates $\hat{h}_\theta(\vec{y})$ for all possible `neighboring' states, i.e. all states $\vec{y}$ such that $p_{\vec{x},\vec{y}}(a)>0$ for some $a\in A_\vec{x}$. In general, let $\vec{x}^{j+}$ (resp. $\vec{x}^{j-}$) denote a state identical to $\vec{x}$ except that the component $x_j$ is increased (resp. decreased) by one, and let $\vec{x}^{\rightarrow j}$ denote a state identical to $\vec{x}$ except that the repairer's location is changed to $j$, where $j\neq i$. We also write $\vec{x}^{\rightarrow i}:=\vec{x}$. Note that degradation events occur independently of the action chosen, so any state of the form $\vec{x}^{j+}$ for some $j\in M$ can be ignored when it comes to minimizing (\ref{online_eq1}), because it occurs with the same probability under any action. Therefore, to find the action that minimizes (\ref{online_eq1}), we only need estimates $\hat{h}_\theta(\vec{y})$ for states $\vec{y}$ that are either (i) equal to $\vec{x}$, (ii) of the form $\vec{y}=\vec{x}^{\rightarrow j}$ where $j$ is a node adjacent to $i$, or (iii) of the form $\vec{x}^{i-}$ where $i\in M$ is the repairer's current location and $x_i\geq 1$. Let $F(\vec{x})$ denote the set of states satisfying either (i), (ii) or (iii).

Immediately after the system enters state $\vec{x}$, we identify the action that minimizes (\ref{online_eq1}) by using the latest available estimates $\hat{h}_\theta(\vec{y})$ for states $\vec{y}\in F(\vec{x})$. 
However, rather than simply selecting the action that minimizes this expression, instead we use the information in $U$ to calculate a confidence interval of the form $[\hat{h}_\theta^{-}(\vec{y}),\hat{h}_\theta^{+}(\vec{y})]$ for each $\vec{y}\in F(\vec{x})$. We then say that an action $a_1\in A_\vec{x}$ is better than another action $a_2\in A_\vec{x}$ if and only if 
\begin{equation}\sum_{\vec{y}\in S}p_{\vec{x},\vec{y}}(a_1)\hat{h}_\theta(\vec{y})<\sum_{\vec{y}\in S}p_{\vec{x},\vec{y}}(a_2)\hat{h}_\theta(\vec{y})\;\;\;\;\;\;\forall \hat{h}_\theta(\vec{y})\in [\hat{h}_\theta^{-}(\vec{y}),\hat{h}_\theta^{+}(\vec{y})],\;\vec{y}\in F(\vec{x}).\label{online_eq2}\end{equation}
In practice, (\ref{online_eq2}) yields some simplifications. For example, if the repairer is at an intermediate stage of the network ($i\notin M$) then the only possible actions are to switch to an adjacent node or to idle at the current node. After substituting the relevant transition probabilities into (\ref{online_eq2}), we find that $a_1$ should be preferred to $a_2$ if 
$$\hat{h}_\theta^{+}(\vec{x}^{\rightarrow a_1})<\hat{h}_\theta^{-}(\vec{x}^{\rightarrow a_2}),$$
which means that the confidence interval for $h_\theta(\vec{x}^{\rightarrow a_1})$ is non-overlapping with that of $h_\theta(\vec{x}^{\rightarrow a_2})$ and lies entirely below it. On the other hand, if the repairer is at a machine $i\in M$ with $x_i\geq 1$ then we also need to consider whether remaining at node $i$ (i.e. carrying out repairs) should be preferred to switching to $j\neq i$. In this case, remaining at $i$ is better than switching to $j$ if the inequality
$$\mu_i \left(\hat{h}_\theta(\vec{x}^{i-})-\hat{h}_\theta(\vec{x})\right)<\tau\left( \hat{h}_\theta(\vec{x}^{\rightarrow j})-\hat{h}_\theta(\vec{x})\right)$$
holds for all $\hat{h}_\theta(\vec{x}^{i-})\in[\hat{h}_\theta^{-}(\vec{x}^{i-}),\hat{h}_\theta^{+}(\vec{x}^{i-})]$, $\hat{h}_\theta(\vec{x}^{\rightarrow j})\in[\hat{h}_\theta^{-}(\vec{x}^{\rightarrow j}),\hat{h}_\theta^{+}(\vec{x}^{\rightarrow j})]$ and $\hat{h}_\theta(\vec{x})\in[\hat{h}_\theta^{-}(\vec{x}),\hat{h}_\theta^{+}(\vec{x})]$. If $\tau\geq\mu_i$ then this is equivalent to
\begin{equation}\mu_i\hat{h}_\theta^{+}(\vec{x}^{i-})-\tau\hat{h}_\theta^{-}(\vec{x}^{\rightarrow j})+(\tau-\mu_i)\hat{h}_\theta^{+}(\vec{x})<0.\label{online_eq3}\end{equation}
On the other hand, if $\tau<\mu_i$ then (\ref{online_eq3}) still applies except that $(\tau-\mu_i)$ is negative and therefore $\hat{h}_\theta^{+}(\vec{x})$ should be replaced with $\hat{h}_\theta^{-}(\vec{x})$ in order to obtain an upper bound for the left-hand side. The confidence intervals are calculated using the method of reliability weights (\cite{galassignu}}, Sec. 21.7); please see Appendix \ref{AppPI} for full details. If there exists an action $a^*\in A_\vec{x}$ which is better than all other actions according to the criteria described above, then this action is chosen by the improving policy. Otherwise, we cannot say with sufficient confidence which action should be chosen, and therefore as a `safety measure' we set $a^*=\theta(\vec{x})$, so that the action prescribed by the base policy is chosen. By reverting to the base policy in the latter situation, we take a conservative approach and aim to ensure that improving actions are chosen only when they are statistically likely to be better than the action of the base policy, which helps to ensure that the policy given by approximate PI is a genuine improvement over the base policy. This is akin to the method of `safe policy improvement', for which there is some existing literature (\cite{laroche2019safe, simao2023safe}).

After an action $a^*\in A_\vec{x}$ has been chosen according to the rules above, the next transition of the system does not occur until a further $\delta$ time units have elapsed, and we perform as many nested simulations as possible until the time limit $\delta$ is reached. In our numerical experiments in Section \ref{sec:numerical} we set $\delta=0.01$ seconds, so that state transitions occur very frequently. Each nested simulation proceeds as follows: first we simulate a single transition of the system under the chosen action $a^*$ and observe a new state $\vec{x}'$. Then, for each of the states $\vec{y}\in F(\vec{x}')$, we simulate the evolution of the system starting from $\vec{y}$, assuming that the base policy $\theta$ is used for selecting actions, until eventually a state contained in the array $U$ is reached. At this point an estimate is obtained for $h_\theta(\vec{y})$ using equation (\ref{offline_equation}), as in the offline part. The estimates $\hat{h}_\theta(\vec{y})$ obtained from the nested simulations supplement the estimates already obtained from the offline part. Thus, the purpose of the nested simulations is to anticipate what the next state of the system might be (by sampling a new state $\vec{x}'$) and enhance the PI policy's ability to make a good choice of action at the next state (by improving the estimates $\hat{h}_\theta(\vec{y})$ for states $\vec{y}\in F(\vec{x}')$). The nested simulations are carried out repeatedly until $\delta$ time units have elapsed, at which point the `actual' realization of the next state $\vec{x}'$ is observed and the main simulation continues. 

By simulating a large number of time steps, with actions chosen using the rules above, we can evaluate the performance of the approximate PI policy. It should be noted that as the simulation progresses we continue to acquire new estimates $\hat{h}_\theta(\vec{x})$ for states $\vec{x}$ that are visited and the confidence intervals $[\hat{h}_\theta^{-}(\vec{x}),\hat{h}_\theta^{+}(\vec{x})]$ become narrower, which means that we revert to the base policy less often. Thus, the policy becomes stronger as we acquire more experience from interacting with the environment. \textcolor{black}{This is a common theme in RL in general, as epitomized by the convergence properties of Q-learning  (\cite{Watkins1992}) and the local improvement properties of other RL methods, such as policy gradient algorithms (\cite{Kakade2001}).}\\

\section{Numerical results}\label{sec:numerical}

In this section we report the results of computational experiments involving 1,000 randomly generated problem instances. Each problem instance includes a randomly generated network layout and also a randomly sampled set of system parameters. We generate the network layout using the method described in \cite{tian2025}. Specifically, we construct a $5\times 5$ integer lattice with 25 nodes connected by horizontal and vertical edges. We then select a random subset of $m$ nodes, where $2 \leq m \leq 8$, and designate these as machines. Thus, each machine $i \in M$ is assigned coordinates $(a_i,b_i)$ with $a_i,b_i \in \{1,...,5\}$. The remaining (unselected) nodes serve as intermediate stages, although some of these may be redundant (in the sense that they would never be visited under an optimal policy) and can be ignored. Figure \ref{graph_example} shows an example in which machines are located at positions $(1,3)$, $(2,5)$ and $(3,1)$. It is clear from the figure that, given these machine locations, the intermediate stages at $(2,1)$, $(2,2)$, $(2,3)$ and $(2,4)$ are the only ones that the repairer needs to visit in order to move between the machines in the most efficient manner possible, so there is no need to include intermediate stages at any other lattice points. The length of the shortest path between machines $i$ and $j$ is the Manhattan distance $|a_i-a_j|+|b_i-b_j|$. 

\begin{figure}[htbp]
    \centering
    \begin{tikzpicture}
        \draw[step=1cm, gray, very thin] (0,0) grid (5,5); 

        \draw[thick,->] (0,5.2) -- (0,-0.3) node[below] {a}; 
        \draw[thick,->] (-0.3,5) -- (5.5,5) node[right] {b};  

        \foreach \x in {1,2,3,4,5} {
            \draw (\x,5) -- (\x,5.1); 
            \node at (\x-0.5,5.3) {\x}; 
        }

        \foreach \y in {1,2,3,4,5} {
            \draw (0,5-\y) -- (-0.1,5-\y); 
            \node at (-0.5,5-\y+0.5) {\y}; 
        }

        \draw[fill=lightgray, draw=black] (1-0.5,3-0.5) circle (6pt); 
        \draw[fill=lightgray, draw=black] (3-0.5,5-0.5) circle (6pt); 
        \draw[fill=lightgray, draw=black] (5-0.5,4-0.5) circle (6pt); 

        \draw[fill=white, draw=black] (1-0.5,4-0.5) circle (6pt); 
        \draw[fill=white, draw=black] (2-0.5,4-0.5) circle (6pt); 
        \draw[fill=white, draw=black] (3-0.5,4-0.5) circle (6pt); 
        \draw[fill=white, draw=black] (4-0.5,4-0.5) circle (6pt); 

        \draw[black] (1-0.5,3-0.29) -- (1-0.5,4-0.71); 
        \draw[black] (1-0.29,4-0.5) -- (2-0.71,4-0.5);
        \draw[black] (2-0.29,4-0.5) -- (3-0.71,4-0.5); 
        \draw[black] (3-0.29,4-0.5) -- (4-0.71,4-0.5); 
        \draw[black] (4-0.29,4-0.5) -- (5-0.71,4-0.5);
        \draw[black] (3-0.5,4-0.29) -- (3-0.5,5-0.71);

    \end{tikzpicture}

    \caption{A network with machines at $(1,3)$, $(2,5)$ and $(3,1)$ and intermediate stages at $(2,1)$, $(2,2)$, $(2,3)$ and $(2,4)$.} 
    \label{graph_example} 
\end{figure}
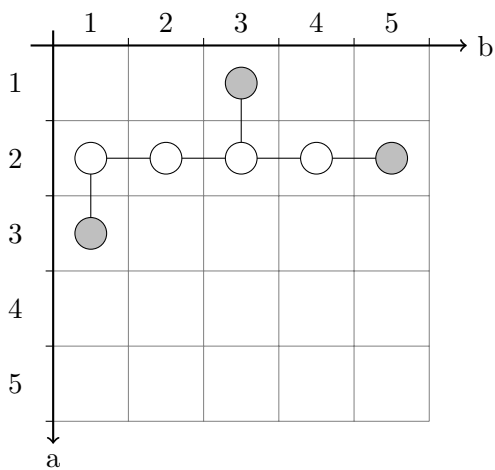

After the network layout has been generated for a specific problem instance, values of the system parameters are sampled randomly. We give a brief summary of this procedure here, but defer full details to Appendix \ref{AppGen}. We assume the number of possible degradation states, $K_i$, is the same for each machine $i\in M$, but sample this value randomly from the set $\{1,2,3,4,5\}$. We also consider three different cases for cost functions $f_i(x_i)$, as follows: (i) $f_i(x_i)$ increases linearly with the degradation state $x_i$; (ii) $f_i(x_i)$ is convex and increases quadratically with $x_i$; (iii) $f_i(x_i)$ increases linearly for $x_i\in\{0,...,K_i-1\}$, but then increases much more sharply when $x_i$ reaches the `failed' state $K_i$. The type of cost function is the same for each machine within a particular problem instance, but the specific parameters used in the cost functions are machine-dependent, so some machines are associated with higher degradation costs than others. The degradation rates $\lambda_i$ and repair rates $\mu_i$ are randomly generated in such a way that the overall traffic intensity, given by $\rho=\sum_i \lambda_i/\mu_i$, lies within the range $[0.1,1.5]$. Thus, we consider systems where the repairer is very under-utilized (i.e. $\rho$ is near zero) and also systems where it is in very high demand (i.e. $\rho>1$). We also define the parameter $\eta:=\tau/(\sum_{i\in M}\lambda_i)$ as a measure of how fast the repairer is able to switch between adjacent nodes relative to the total of the machine degradation rates, and generate the value of $\tau$ in order to consider a wide range of cases for the value of $\eta$. Please see Appendix \ref{AppGen} for full details of our parameter generation methods.

Recall that $\theta^{\text{IND}}$ denotes the index policy defined in Section \ref{sec:index} (see Definition \ref{index_definition}). We use $\theta^{\text{OPI}}$ to denote the policy followed during implementation of the online policy improvement (OPI) method discussed in Section \ref{sec:polimp}, with a time limit $\delta=0.01$ used for decision-making; that is, the system changes state every $0.01$ seconds and the OPI heuristic is forced to make a decision within that time (please see Appendix \ref{AppPI} for further details of how the OPI heuristic is implemented in our experiments). Within each problem instance, we use simulation experiments to assess the performances of $\theta^{\text{IND}}$ and $\theta^{\text{OPI}}$. Also, in instances with $2\leq m\leq 4$, we use dynamic programming (specifically, exact policy improvement) to calculate optimal long-run average costs and rewards and use these to evaluate suboptimality percentages for these heuristics. As $m$ increases, it becomes computationally infeasible to calculate an optimal policy using dynamic programming, so we do not calculate suboptimality percentages for systems with $5\leq m\leq 8$. 

Our experiments were carried out on an Intel(R) Core(TM) i5-6500T desktop computer, with 8GB of RAM, running Windows 10. The software used was Python 3.9.10, with the PyPy just-in-time compiler (\url{http://pypy.org}) used to speed up computations. During online implementation, the $\theta^{\text{POL}}$ and $\theta^{\text{IND}}$ policies require a negligible amount of time to make a decision at any individual time step. On the other hand, $\theta^{\text{OPI}}$ carries out as many nested simulations as it can within the time limit $\delta$ in order to improve the value function estimates for nearby states, as described in Section \ref{sec:polimp}. Since all of our heuristics can make decisions quickly in an online setting, the number of experiments that we are able to perform is limited not by the decision-making processes of the heuristics themselves, but rather by the number of time steps that we need to simulate in order to accurately evaluate the performances of the heuristics. We simulate each heuristic over $500,000$ time steps within each problem instance (please see Appendix \ref{AppSim} for further details).

The remainder of this section is organized as follows. In Section \ref{num:polling} we describe a simple heuristic based on polling systems that can be used to provide a useful benchmark for assessing the performances of the index and OPI heuristics. In Section \ref{num:comparison of the heuristic}, we evaluate the performances of our heuristic policies by calculating their suboptimalities relative to the optimal values in systems of modest size ($2\leq m\leq 4$) and also comparing them with the polling system policies. We also report the improvements that the OPI heuristic is able to achieve over the index heuristic in larger systems ($5\leq m\leq 8$). In Section \ref{num:comparison of parameters} we investigate the effects of varying the system parameters on the performances of our heuristic policies.

\subsection{Polling system heuristic}\label{num:polling}

In order to provide an additional benchmark for comparisons, we introduce a simple `polling system' heuristic that works by touring a subset of the $m$ machines in a cyclic pattern and always fully repairing each machine it visits before moving to the next one. \textcolor{black}{A polling system is a type of queueing model in which a single server provides service to multiple queues according to a pre-specified rule (\cite{Yechiali1993,Levy2002,Boon2011}). Generally, it visits the queues cyclically in a fixed order and remains at each queue until a certain condition is met---which might be that the queue becomes empty (this is known as the `exhaustive' regime) or that the queue no longer includes any jobs that were present when the server arrived (this is the `gated' regime). Standard assumptions of polling systems include random job arrival and service times, and also a random switching time to move between any two queues. In our problem, we can develop a polling heuristic by interpreting machine degradation events as job arrivals and supposing that the repairer visits a subset of the machines in a fixed cyclic pattern and provides exhaustive service at each one. For each possible subset, the order is chosen by minimizing the total travel distance over one cycle. Specifically,} consider a non-empty subset of machines $\mathcal M\subseteq M$ and let $\theta^{\text{POL}}(\mathcal M)$ denote a policy under which the repairer visits the machines in a cyclic pattern $(i_1,...i_{|\mathcal M|}, i_1, ...)$, where $i_1,....,i_{|\mathcal M|}$ are distinct nodes in $\mathcal M$. We assume that the order of visiting the machines is chosen in order to minimize the total travel distance,
\begin{equation}D(i_1,...,i_{|\mathcal M|}):=\sum_{j=1}^{|\mathcal M|-1}d(i_j,i_{j+1})+d(i_{|\mathcal M|},i_1),\label{travel_dist}\end{equation}
where $d(i,j)$ is the length of a shortest path between machines $i$ and $j$. In order to minimize this travel distance we must solve a `traveling salesman' problem (TSP), but since we only implement the polling system heuristic for systems with $m\leq 4$ the solution to the TSP is easy to find using brute force. Thus, for any given subset $\mathcal M\subseteq M$ we calculate the performance of $\theta^{\text{POL}}(\mathcal M)$ by first solving a TSP to find a sequence $(i_1,...,i_{|\mathcal M|})$ that minimizes the travel distance (\ref{travel_dist}), then simulating the average cost incurred when the repairer follows the resulting cyclic pattern, enforcing the rule that damaged machines are always fully repaired before the repairer moves to the next machine. For any given system, we can obtain a useful benchmark for our index and OPI heuristics by simulating the performances of all possible `polling system' policies $\theta^{\text{POL}}(\mathcal M)$ (taking into account all possible subsets $\mathcal M\subseteq M$) and then reporting the lowest average cost among all of these. We use $\theta^{\text{POL}}\in\argmin_{\mathcal M} \{g_{\theta^{\text{POL}}(\mathcal M)}(\vec{x})\}$ to denote the best-performing polling system heuristic within a particular instance (with ties broken arbitrarily).

\textcolor{black}{Although we use a polling heuristic primarily for benchmarking purposes, we note that heuristics based on polling systems have been successfully applied to dynamic routing problems based on order-picking in warehouses (\cite{Ran2020}) and mobile edge computing (\cite{Wang2023}). Various other applications of these methods are discussed in the survey by \cite{Borst2018}.}


\subsection{Performances of heuristic policies in systems with $2\leq m\leq 8$} 
\label{num:comparison of the heuristic}

In systems with $2\leq m\leq 4$ we measure the percentage suboptimalities of the heuristic policies $\theta^{\text{POL}}$, $\theta^{\text{IND}}$ and $\theta^{\text{OPI}}$ by comparing the average costs under these policies with the optimal values given by dynamic programming. In addition, we report the percentage suboptimalities when the average reward $u_\theta(\vec{x})$ defined in (\ref{avg_reward}) is used to measure the performance of a policy, rather than the average cost $g_\theta(\vec{x})$. Note that by Proposition \ref{prop2} and Corollary \ref{corollary} we have $g_\theta(\vec{x})=\sum_{j\in M} f_j(K_j)-u_\theta(\vec{x})$, and therefore the \emph{absolute} suboptimality is the same regardless of whether it is measured using costs or rewards. That is, for any policy $\theta$ and $\vec{x}\in S$ we have
$$g_\theta(\vec{x})-g^*=u^*-u_\theta(\vec{x}),$$
where $g^*$ and $u^*$ denote the average cost and average reward, respectively, under the optimal policy. However, the \emph{percentage} suboptimalities under the cost and reward formulations are $100\times(g_\theta(\vec{x})-g^*)/g^*$ and $100\times(u^*-u_\theta(\vec{x}))/u^*$ respectively, and these are different because $g^*\neq u^*$ in general. Although our original problem formulation in Section \ref{sec:formulation} was given in terms of costs, we would argue that measuring percentage suboptimalities in terms of rewards is also useful, because $u^*$ is more robust than $g^*$ with respect to the problem size (represented by the number of machines $m$). To see this, note that as $m$ increases, it becomes increasingly likely that some machines are never visited by the repairer; that is, they are left permanently broken. These non-visited machines cause the optimal average cost $g^*$ to increase, but they have no effect on $u^*$. Hence, the ratio $(g_\theta(\vec{x})-g^*)/g^*$ decreases purely because of the inflation in $g^*$, whereas the ratio $(u^*-u_\theta(\vec{x}))/u^*$ remains unaffected. In truth, both measures of suboptimality could be insightful depending on how one chooses to view the problem, so we consider both in this section.

\textcolor{black}{To find the optimal values $g^*$ and $u^*$ in systems with $2\leq m\leq 4$, the dynamic programming method we use is the policy iteration algorithm for unichain, average reward models, as described in \cite{Puterman1994} (p. 378). This algorithm works by taking an initial base policy (which we set as the modified index policy $\theta^{\text{M-IND}}$, as discussed in Section \ref{sec:polimp}) and then iterating between an `evaluation' step and an `improvement' step. The evaluation step is used to calculate the relative value function of the latest candidate policy, and the improvement step is used to find a superior policy by applying the Bellman equation at each state. The algorithm is guaranteed to converge to an optimal policy in a finite number of iterations. For details of the algorithm and associated computational times, please refer to Appendix \ref{AppPIA}.}



Tables \ref{EPI_cost} and \ref{EPI_reward} show, for each heuristic policy, 95\% confidence intervals for the mean percentage suboptimality relative to the optimal value $g^*$ under the cost-based formulation and $u^*$ under the reward-based formulation, respectively, based on results from 412 ‘small’ problem instances. 
We also report the 10th, 25th, 50th, 75th, and 90th percentiles of the percentage suboptimality distributions for each heuristic. These results indicate that the online policy improvement heuristic ($\theta^{\text{OPI}}$) consistently outperforms the other two heuristics, with an average percentage suboptimality of $2.51\%$ under the cost formulation and $1.02\%$ under the reward formulation. The index heuristic ($\theta^{\text{IND}}$) performs well in many instances, but is clearly inferior to $\theta^{\text{OPI}}$, most likely due to its short-sighted method of making decisions (as discussed in Section \ref{sec:index}). \textcolor{black}{It is unsurprising that $\theta^{\text{OPI}}$ improves upon $\theta^{\text{IND}}$ because the method of constructing $\theta^{\text{OPI}}$ involves using $\theta^{\text{IND}}$ as a base and then applying an improvement step.} The polling system heuristic ($\theta^{\text{POL}}$) is clearly weaker than $\theta^{\text{IND}}$ and $\theta^{\text{OPI}}$, despite the fact that $\theta^{\text{POL}}$ is a type of `ensemble' heuristic that involves taking the best average cost (or reward) from all of the possible polling heuristics $\theta^{\text{POL}}(\mathcal M)$ within each problem instance. The weak performance of $\theta^{\text{POL}}$ essentially shows the limitations of failing to respond dynamically to the latest machine degradation levels.

\begin{table}[H] 
	\centering
        \color{black}
	\caption{\black{Percentage suboptimalities of heuristic policies in 412 `small' problem instances with $2 \leq m \leq 4$, evaluated under the cost-based formulation.}}
\footnotesize 
	\label{EPI_cost}
		\begin{tabular}{lcccccccccccccccccccc} 
			\toprule 
			Heuristic &Mean&10th pct.&25th pct.&50th pct.&75th pct.&90th pct.\\
			\midrule
			POL & $29.64\pm3.72$ & 0.00 & 5.78 & 17.36 & 35.25 & 71.14 \\
                IND & $8.11\pm1.04$ &0.01 & 0.93 & 4.60 & 10.81 & 21.23 \\
                OPI & $2.51\pm0.55$ & 0.00 
               & 0.03 & 0.66 & 2.45 & 6.60\\
			\bottomrule 
	    \end{tabular}
\end{table}

\begin{table}[H] 
	\centering
        \color{black}
	\caption{\black{Percentage suboptimalities of heuristic policies in 412 `small' problem instances with $2 \leq m \leq 4$, evaluated under the reward-based formulation.}}
\footnotesize 
	\label{EPI_reward}
		\begin{tabular}{lcccccccccccccccccccc} 
			\toprule 
			Heuristic &Mean&10th pct.&25th pct.&50th pct.&75th pct.&90th pct.\\
			\midrule
			POL & $8.36\pm0.72$ & 0.00 & 2.07 & 7.43 & 12.64 & 18.95\\
                IND & $3.01\pm0.45$ & 0.00 & 0.37 & 1.70 & 3.89 & 7.30\\
                OPI & $1.02\pm0.32$ & 0.00 
                 & 0.02 & 0.25 & 0.83 & 2.09 \\
			\bottomrule 
	    \end{tabular}
\end{table}

Tables \ref{improvements with varying m_cost} and \ref{improvements with varying m_reward} show the percentage improvements achieved by $\theta^{\text{OPI}}$ against $\theta^{\text{IND}}$ over all 1,000 problem instances, with the results categorized according to the number of machines $m$, under the cost and reward formulations respectively. In larger problem sizes (with $m\geq 5$) it becomes impractical to compute exact percentage suboptimalities, but the results in these tables provide some assurance that the extra benefit given by applying online policy improvement remains fairly consistent as the size of the problem increases.

\begin{table}[H] 
	\centering
	\color{black}
	\caption{\black{Percentage improvements of OPI over the index policy for different values of $m$, evaluated under the cost-based formulation.}}
	\label{improvements with varying m_cost}
\footnotesize 
		\begin{tabular}{lcccccccccccccccccccc} 
			\toprule 
			 & $m=2$ & $m=3$ & $m=4$ & $m=5$ & $m=6$ & $m=7$ & $m=8$\\
              & [132 instances] & [148] & [132] & [146] & [148] &[149] &[145]\\
			\midrule
		OPI vs. IND & $3.45\pm1.13$ & $5.43\pm1.13$ & $5.00\pm0.91$ & $5.96\pm0.97$ & $5.12\pm0.76$ & $6.33\pm1.02$ & $4.98\pm0.94$\\
			\bottomrule 
	    \end{tabular}
\end{table}

\begin{table}[H] 
	\centering
	\color{black}
	\caption{\black{Percentage improvements of OPI over the index policy for different values of $m$, evaluated under the reward-based formulation.}}
	\label{improvements with varying m_reward}
\footnotesize 
		\begin{tabular}{lcccccccccccccccccccc} 
			\toprule 
			 & $m=2$ & $m=3$ & $m=4$ & $m=5$ & $m=6$ & $m=7$ & $m=8$\\
              & [132 instances] & [148] & [132] & [146] & [148] &[149] &[145]\\
			\midrule
		OPI vs. IND & $1.83\pm1.23$ & $2.97\pm1.53$ & $2.12\pm0.42$ & $2.37\pm0.37$ & $2.32\pm0.42$ & $2.12\pm0.36$ & $2.08\pm0.39$\\
			\bottomrule 
	    \end{tabular}
\end{table}

Table \ref{time proportions with varying m} shows, for each $m\in\{2,...,8\}$, the percentage of time steps at which the heuristic $\theta^{\text{OPI}}$ resorts to a `safe' action choice by selecting the same action given by the index heuristic $\theta^{\text{IND}}$. Recall from Section \ref{sec:polimp} that, in the online implementation of the OPI heuristic, the policy makes a `safe' choice of action if it cannot identify the best choice of action with a sufficient level of confidence. Table \ref{time proportions with varying m} shows that `safe' action choices are selected with increasing frequency as $m$ increases. This is due to the fact that, as the number of dimensions in the state space increases, it becomes more difficult to obtain reliable estimates for the values at individual states during the offline stage because there are relatively few states that are visited sufficiently often. This problem could be addressed by allocating more time to the offline stage as $m$ becomes larger, or increasing the online time limit $\delta$ (currently set at $0.01$ seconds) so that the algorithm can gather more information about nearby states at each time step during the online stage. 

\begin{table}[H] 
	\centering
	\color{black}
	\caption{\black{Percentage of `safe' action choices under $\theta^{\text{OPI}}$ for different values of $m$.}}
	\label{time proportions with varying m}
\footnotesize 
		\begin{tabular}{lcccccccccccccccccccc} 
			\toprule 
			 & $m=2$ & $m=3$ & $m=4$ & $m=5$ & $m=6$ & $m=7$ & $m=8$\\
              & [132 instances] & [148] & [132] & [146] & [148] &[149] &[145]\\
			\midrule
		\makecell{Pct. of `safe' \\ action choices} & $0.27\pm0.16$ & $0.86\pm0.35$ & $1.81\pm0.47$ & $4.53\pm0.93$ & $5.76\pm0.77$ & $8.37\pm1.12$ &
        $11.59\pm1.21$\\
			\bottomrule 
	    \end{tabular}
\end{table}

\subsection{Effects of varying the system parameters} \label{num:comparison of parameters}

Next, we investigate how the results from our experiments depend on the values of important system parameters. \textcolor{black}{The parameters that we focus on are the traffic intensity $\rho = \sum_{i\in M} \lambda_i / \mu_i$, the relative switching speed $\eta = \tau / (\sum_{i \in M} \lambda_i)$, the type of cost function $f_i(x_i)$ for $i\in M$ and the maximum degradation state $K$. By varying these parameters we can investigate a wide range of different scenarios, including those in which degradation events occur very frequently (or infrequently) relative to the speed of repairs, those in which the repairer travels very slowly (or quickly) through the network, those in which costs increase superlinearly with the level of degradation, and those in which there are several different stages of degradation before a machine fails completely. Thus, we aim to identify conditions under which our various heuristics perform well (or poorly). To the best of our knowledge, there are no publicly available benchmark instances that could be used to generate these parameter values, so we adopt a random generation method which ensures good coverage within the ranges of values that we consider; see Appendix \ref{AppGen} for full details.}

In Tables \ref{newtab:various_rho1} and \ref{newtab:various_rho2} we categorize our results according to the traffic intensity, $\rho = \sum_{i\in M} \lambda_i / \mu_i$. The first three rows of the tables show percentage suboptimalities of the heuristic policies $\theta^{\text{POL}}$, $\theta^{\text{IND}}$ and $\theta^{\text{OPI}}$ over the 412 instances with $2\leq m\leq 4$, and the fourth rows show the percentage improvements of $\theta^{\text{OPI}}$ over $\theta^{\text{IND}}$ over all 1,000 instances with $2\leq m\leq 8$. The columns correspond to disjoint, equal-length intervals for $\rho$. Within the table cells we report $95\%$ confidence intervals for the relevant values under both the cost formulation (shown in regular font) and the reward formulation (shown in bold font). Table \ref{newtab:various_eta} follows the same format as Tables \ref{newtab:various_rho1} and \ref{newtab:various_rho2} except that the parameter of interest is $\eta = \tau / (\sum_{i \in M} \lambda_i)$, which measures the relative switching speed compared to the sum of the machine degradation rates. Table \ref{newtab:various_cost} again follows the same format except that we categorize the results according to the cost function $f_i(x_i)$. The three cost functions that we consider are: linear, quadratic and piecewise linear (see Appendix \ref{AppGen} for full details). Finally, Table \ref{newtab:various_K} again follows the same format except that the parameter of interest is $K$, which is the maximum degradation state used for all machines in the system.

\begin{table}[htbp] 
    \centering
    \color{black}
    \caption{Summary of numerical results categorized according to the value of $\rho=\sum_i \lambda_i/\mu_i$ and $0.1\leq \rho < 0.7$, with 95\% confidence intervals shown under both the cost formulation (regular font) and the reward formulation (bold font)}
    \label{newtab:various_rho1}
    \footnotesize 
        \begin{tabular}{lcccccccc}
            \toprule 
             &$0.1\leq \rho < 0.3$ & $0.3\leq \rho < 0.5$ & $0.5\leq \rho < 0.7$\\
            \midrule 
            $\theta^{\text{POL}}$ Subopt. & $72.03\pm13.88$ & $42.98\pm10.65$ & $24.16\pm5.09$\\
            $$[412 instances]$$ & $\mathbf{[9.11\pm2.09]}$ & $\mathbf{[9.91\pm2.30]}$ & $\mathbf{[8.66\pm2.23]}$\\
            \midrule 
            $\theta^{\text{IND}}$ Subopt. & $8.92\pm2.31$ & $13.29\pm3.87$ & $11.32\pm3.70$\\
            $$[412 instances]$$ & $\mathbf{[1.78\pm0.73]}$ & $\mathbf{[3.23\pm1.24]}$ & $\mathbf{[3.73\pm1.37]}$\\
            \midrule 
            $\theta^{\text{OPI}}$ Subopt. & $2.90\pm1.27$ & $3.51\pm1.63$ & $3.35\pm1.54$\\
            $$[412 instances]$$ & $\mathbf{[0.92\pm0.79]}$ & $\mathbf{[0.64\pm0.31]}$ & $\mathbf{[1.23\pm0.91]}$\\
            \midrule 
            $\theta^{\text{OPI}}$ imp. vs.\ $\theta^{\text{IND}}$ & $6.26\pm1.02$ & $7.39\pm1.34$ & $6.82\pm1.04$\\
            $$[1,000 instances]$$ &  $\mathbf{[1.21\pm0.55]}$ &  $\mathbf{[2.35\pm0.71]}$ &  $\mathbf{[2.67\pm0.50]}$\\
            \bottomrule
        \end{tabular}
\end{table}

\begin{table}[htbp] 
    \centering
    \color{black}
    \caption{Summary of numerical results categorized according to the value of $\rho=\sum_i \lambda_i/\mu_i$ and $0.7\leq \rho < 1.5$, with 95\% confidence intervals shown under both the cost formulation (regular font) and the reward formulation (bold font)}
    \label{newtab:various_rho2}
    \scriptsize
    \footnotesize 
        \begin{tabular}{lcccccccc}
            \toprule 
             &$0.7\leq \rho < 0.9$ & $0.9\leq \rho < 1.1$ & $1.1\leq \rho < 1.3$ & $1.3\leq \rho < 1.5$\\
            \midrule 
            $\theta^{\text{POL}}$ Subopt. &$19.57\pm4.33$ & $16.56\pm4.10$ & $13.96\pm3.61$ & $8.13\pm1.66$\\
            $$[412 instances]$$ & $\mathbf{[7.28\pm1.75]}$ & $\mathbf{[8.96\pm1.97]}$ & $\mathbf{[8.96\pm1.87]}$ & $\mathbf{[6.27\pm1.21]}$\\
            \midrule 
            $\theta^{\text{IND}}$ Subopt. &$9.20\pm2.91$ & $6.46\pm3.24$ & $5.74\pm1.80$ & $2.86\pm0.76$\\
            $$[412 instances]$$ & $\mathbf{[2.95\pm1.43]}$ & $\mathbf{[2.94\pm0.99]}$ & $\mathbf{[4.30\pm1.81]}$ & $\mathbf{[2.74\pm0.86]}$\\
            \midrule 
            $\theta^{\text{OPI}}$ Subopt. & $2.98\pm1.51$ & $2.87\pm3.04$ & $1.51\pm0.63$ & $0.87\pm0.47$\\
            $$[412 instances]$$ &  $\mathbf{[0.80\pm0.36]}$ & $\mathbf{[1.29\pm1.33]}$ & $\mathbf{[1.04\pm0.51]}$ & $\mathbf{[1.27\pm1.18]}$\\
            \midrule 
            $\theta^{\text{OPI}}$ imp. vs.\ $\theta^{\text{IND}}$ &$5.80\pm1.05$ & $3.81\pm0.81$ & $3.74\pm0.75$ & $2.61\pm0.51$\\
            $$[1,000 instances]$$ &    $\mathbf{[2.49\pm1.09]}$ &  $\mathbf{[2.13\pm0.60]}$ &  $\mathbf{[3.06\pm1.29]}$ &  $\mathbf{[2.19\pm0.80]}$\\
            \bottomrule
        \end{tabular}
\end{table}

\begin{table}[H] 
    \centering
    \color{black}
    \caption{Summary of numerical results categorized according to the value of $\eta = \tau / (\sum_{i \in M} \lambda_i)$, with 95\% confidence intervals shown under both the cost formulation (regular font) and the reward formulation (bold font)}
    \label{newtab:various_eta}
    \footnotesize
    \resizebox{\textwidth}{!}{
        \begin{tabular}{lcccccccc}
            \toprule 
             &$0.1\leq \eta < 0.4$ & $0.4\leq \eta < 0.7$ & $0.7\leq \eta < 1$ & $1\leq \eta < 4$ & $4\leq \eta < 7$ & $7\leq \eta < 10$\\
            \midrule 
            $\theta^{\text{POL}}$ Subopt. & $5.89\pm2.16$ & $16.96\pm5.09$ & $23.78\pm6.19$ & $41.71\pm11.45$ & $37.87\pm10.52$ & $50.90\pm10.82$\\
            $$[412 instances]$$ & $\mathbf{[4.89\pm1.66]}$ & $\mathbf{[10.50\pm2.38]}$ & $\mathbf{[11.57\pm2.18]}$ & $\mathbf{[10.72\pm1.61]}$ & $\mathbf{[7.17\pm1.05]}$ & $\mathbf{[5.88\pm0.82]}$\\
            \midrule 
            $\theta^{\text{IND}}$ Subopt. & $4.09\pm1.82$ & $5.33\pm1.54$ & $6.26\pm2.27$ & $10.43\pm2.62$ & $9.85\pm3.15$ & $12.43\pm3.10$\\
            $$[412 instances]$$ & $\mathbf{[4.26\pm1.92]}$ & $\mathbf{[3.91\pm1.22]}$ & $\mathbf{[3.62\pm0.97]}$ & $\mathbf{[3.02\pm0.60]}$ & $\mathbf{[2.03\pm0.53]}$ & $\mathbf{[1.31\pm0.29]}$\\
            \midrule 
            $\theta^{\text{OPI}}$ Subopt. & $1.59\pm1.07$ & $1.24\pm0.49$ & $2.13\pm1.11$ & $2.84\pm1.10$ & $2.95\pm1.45$ & $4.20\pm2.12$\\
            $$[412 instances]$$ & $\mathbf{[2.18\pm1.55]}$ & $\mathbf{[0.71\pm0.25]}$ & $\mathbf{[1.35\pm0.87]}$ & $\mathbf{[0.90\pm0.37]}$ & $\mathbf{[0.55\pm0.22]}$ & $\mathbf{[0.39\pm0.19]}$\\
            \midrule 
            $\theta^{\text{OPI}}$ imp. vs.\ $\theta^{\text{IND}}$ & $2.37\pm0.78$ & $4.09\pm0.63$ & $4.02\pm0.89$ & $7.76\pm0.95$ & $6.75\pm1.15$ & $5.83\pm0.79$\\
            $$[1,000 instances]$$ & $\mathbf{[3.26\pm1.66]}$ & $\mathbf{[3.33\pm0.59]}$ & $\mathbf{[2.58\pm0.60]}$ & $\mathbf{[2.29\pm0.32]}$ & $\mathbf{[1.27\pm0.22]}$ & $\mathbf{[0.88\pm0.13]}$\\
            \bottomrule
        \end{tabular}
}
\end{table}

\begin{table}[H] 
    \centering
    \color{black}
    \caption{Summary of numerical results categorized according to the type of cost function $f_i(x_i)$ used, with 95\% confidence intervals shown under both the cost formulation (regular font) and the reward formulation (bold font)}
    \label{newtab:various_cost}
    \footnotesize
        \begin{tabular}{lccc}
            \toprule 
             & Linear & Quadratic & Piecewise linear\\
            \midrule 
            $\theta^{\text{POL}}$ Subopt. & $26.47\pm4.96$ & $32.76\pm7.41$ & $29.09\pm6.31$\\
            $[412 \text{ instances}]$ & $\mathbf{[10.16\pm1.42]}$ & $\mathbf{[7.55\pm1.07]}$ & $\mathbf{[7.55\pm1.27]}$\\
            \midrule 
            $\theta^{\text{IND}}$ Subopt. & $5.38\pm1.55$ & $9.19\pm1.99$ & $9.50\pm1.72$\\
            $[412 \text{ instances}]$ & $\mathbf{[2.98\pm1.05]}$ & $\mathbf{[2.82\pm0.59]}$ & $\mathbf{[3.27\pm0.70]}$ \\
            \midrule 
            $\theta^{\text{OPI}}$ Subopt. & $1.42\pm0.54$ & $3.20\pm1.23$ & $2.78\pm0.83$\\
            $[412 \text{ instances}]$ & $\mathbf{[0.95\pm0.49]}$ & $\mathbf{[0.77\pm0.33]}$ & $\mathbf{[1.38\pm0.80]}$\\
            \midrule 
            $\theta^{\text{OPI}}$ imp. vs.\ $\theta^{\text{IND}}$ & $3.58\pm0.52$ & $5.55\pm0.64$ & $6.49\pm0.73$\\
            $[1,000 \text{ instances}]$ & $\mathbf{[2.39\pm0.72]}$ & $\mathbf{[2.19\pm0.30]}$ & $\mathbf{[2.23\pm0.52]}$\\
            \bottomrule
        \end{tabular}
\end{table}

\begin{table}[H] 
    \centering
    \color{black}
    \caption{Summary of numerical results categorized according to the maximum degradation state $K$, with 95\% confidence intervals shown under both the cost formulation (regular font) and the reward formulation (bold font)}
    \label{newtab:various_K}
    \footnotesize
        \begin{tabular}{lccccc}
            \toprule 
             & $K=1$ & $K=2$ & $K=3$ & $K=4$ & $K=5$ \\
            \midrule 
            $\theta^{\text{POL}}$ Subopt. & $21.81\pm5.95$ & $33.79\pm11.74$ & $28.90\pm7.34$ & $33.91\pm8.89$ & $29.48\pm7.74$\\
            $$[412 instances]$$ & $\mathbf{[11.64\pm2.29]}$ & $\mathbf{[9.21\pm1.81]}$ & $\mathbf{[8.12\pm1.51]}$ & $\mathbf{[7.12\pm1.30]}$ & $\mathbf{[6.39\pm1.04]}$\\
            \midrule 
            $\theta^{\text{IND}}$ Subopt. & $3.26\pm1.11$ & $4.74\pm1.58$ & $6.94\pm1.52$ & $12.01\pm2.66$ & $12.06\pm3.09$ \\
            $$[412 instances]$$ & $\mathbf{[2.86\pm1.11]}$ & $\mathbf{[2.01\pm0.86]}$ & $\mathbf{[2.85\pm0.78]}$ & $\mathbf{[3.66\pm1.15]}$ & $\mathbf{[3.42\pm1.04]}$ \\
            \midrule 
            $\theta^{\text{OPI}}$ Subopt. & $1.19\pm0.89$ & $0.96\pm0.47$ & $1.83\pm0.96$ & $4.10\pm1.36$ & $3.93\pm1.70$\\
            $$[412 instances]$$ & $\mathbf{[1.57\pm1.33]}$ & $\mathbf{[0.50\pm0.33]}$ & $\mathbf{[0.95\pm0.73]}$ & $\mathbf{[1.07\pm0.53]}$ & $\mathbf{[0.98\pm0.27]}$\\
            \midrule 
            $\theta^{\text{OPI}}$ imp. vs.\ $\theta^{\text{IND}}$ & $2.16\pm0.58$ & $4.33\pm0.63$ & $5.08\pm0.67$ & $7.02\pm0.84$ & $7.03\pm1.09$\\
            $$[1,000 instances]$$ & $\mathbf{[1.71\pm0.86]}$ & $\mathbf{[1.89\pm0.41]}$ & $\mathbf{[2.18\pm0.48]}$ & $\mathbf{[2.75\pm0.78]}$ & $\mathbf{[2.70\pm0.73]}$\\
            \bottomrule
        \end{tabular}
\end{table}

From Tables \ref{newtab:various_rho1} and \ref{newtab:various_rho2} we can see that the polling system heuristic $\theta^{\text{POL}}$ is very weak when $\rho$ is small, but improves as $\rho$ increases. Indeed, when $\rho$ is small, we often have a situation where only one machine is degraded (that is, $x_j>0$ for only one $j\in M$) and the repairer should go directly to that machine under an optimal policy, but the polling heuristic forces the repairer to make wasteful visits to all machines according to a fixed cyclic pattern. The other heuristics $\theta^{\text{IND}}$ and $\theta^{\text{OPI}}$ also seem to achieve lower suboptimality values as $\rho$ increases under the cost formulation, but this may be caused by the inflation in $g^*$ that occurs when the overall degradation burden increases (as discussed earlier). The bold-font figures in Tables \ref{newtab:various_rho1} and \ref{newtab:various_rho2} show that when the reward formulation is used, there is no clear trend for the performances of $\theta^{\text{IND}}$ and $\theta^{\text{OPI}}$ to get better or worse as $\rho$ increases. Instead, $\theta^{\text{IND}}$ is consistently within $5\%$ of optimality, while $\theta^{\text{OPI}}$ is consistently within $2\%$.

Table \ref{newtab:various_eta} shows that, for all three heuristic policies, there is a general trend for suboptimality to increase with $\eta$ when the cost formulation is used. The same reasoning given above also applies in this case, since smaller $\eta$ values imply that machine degradation happens more frequently (relative to the repairer's movement speed), so the optimal value $g^*$ will tend to increase. Under the reward formulation, however, it seems that the trend for $\theta^{\text{IND}}$ and $\theta^{\text{OPI}}$ is the opposite: as $\eta$ increases, these heuristics seem to improve their performance. This may be due to the fact that the indices used by $\theta^{\text{IND}}$ (which also influence the decisions of $\theta^{\text{OPI}}$) are calculated under the assumption that the repairer will move directly to a particular demand point without changing direction, which is more likely to be an optimal course of action if $\eta$ is large (since there is less chance of new degradation events occurring while the repairer is moving). 

Table \ref{newtab:various_cost} shows that under the cost formulation, the three heuristic policies seem to achieve the lowest suboptimality values when the linear cost function is used. However, under the reward formulation, this trend is no longer apparent. 
Recall that the linear and quadratic cost functions are defined by $f_i(x_i)=c_ix_i$ and $f_i(x_i)=c_ix_i^2$ respectively, whereas the piecewise linear cost function $f_i(x_i)=c_i(x_i+10\cdot\mathbb{I}(x_i=K_i))$ assumes an extra penalty for any machine that has reached the state of complete failure. One might imagine that the piecewise linear case would be a difficult case to optimize, as even the machines with relatively low service priorities should still (ideally) be visited regularly enough to make complete failure unlikely. From Table \ref{newtab:various_cost}, however, there is no clear evidence that the heuristics perform worse in this case than in the other two.

Finally, Table \ref{newtab:various_K} shows that the effect of measuring machine degradation on a finer scale (in other words, increasing the number of possible degradation states) is rather difficult to pinpoint based on the suboptimality percentages. When the cost formulation is used, the performances of $\theta^{\text{IND}}$ and $\theta^{\text{OPI}}$ appear to become slightly worse as $K$ increases, which may be expected since the problem becomes intrinsically more complex. However, under the reward formulation, this trend seems to disappear. Indeed, even in the $K=5$ case, $\theta^{\text{IND}}$ and $\theta^{\text{OPI}}$ are able to achieve mean suboptimalities of less than $3.5\%$ and $1\%$ respectively. Encouragingly, in all of the cases shown in Tables \ref{newtab:various_rho1}}-\ref{newtab:various_K}, $\theta^{\text{OPI}}$ is able to obtain healthy improvements over $\theta^{\text{IND}}$.\\

\section{Conclusions}\label{sec:conclusions}

\textcolor{black}{This paper considers an original model of a stochastic, dynamic repair and maintenance problem in which a single repairer provides service to machines that are spatially dispersed on a network. By allowing switching and service times to be interruptible, we obtain an MDP in which decisions can be made at any point in time. Each machine evolves through a finite set of degradation states, and switching times depend not only on the machines involved but also on the progress made during any interrupted operations. Costs are linked to machine conditions using functions that can be defined arbitrarily. Although we have presented the model in the context of machine maintenance, our mathematical model is quite general and could be applied to other settings such as cybersecurity and healthcare routing, as discussed in Section \ref{sec:lit_review}.}

\textcolor{black}{From the methodological point of view, our main contributions are (i) the development of an index heuristic which exploits the interruptibility of service and switching operations by recalculating indices at every time step, and (ii) the use of a rollout-based RL method, with a novel safety feature based on reliability weights, to improve upon the index heuristic by making decisions in a less myopic way. The index heuristic is simple to implement and can be scaled to large problem instances without difficulty, as the number of indices to be calculated at each time step increases only linearly with the number of machines. Furthermore, our results in Section \ref{sec:index} have proven that this heuristic is optimal in certain special cases. Our numerical results in Section \ref{sec:numerical} have shown that the rollout-based online policy improvement method is close to optimal in small instances and consistently yields improvements over the index heuristic in larger instances, despite being restricted to only 0.01 seconds of simulation time per system transition during the online phase. This suggests that the approach is suitable for use in fast-changing real-world systems, such as those encountered in communications and computing.}

\textcolor{black}{Future research could focus on adapting these methods to more general frameworks, such as those with multiple repairers or nonstationary dynamics. It would be interesting to explore settings with partial observability, in which the repairer has incomplete knowledge of the conditions of the various machines. This could lead to the formulation of a partially observable MDP or POMDP (\cite{Kaelbling1998}). Another interesting avenue to explore, which could be relevant to security settings, is where degradation events occur as a result of adversarial behavior by a strategic opponent, rather than occurring at random. This would lead to a game theoretical formulation. It would also be interesting to investigate the use of other RL approaches, such as policy function approximation, which might be faster to implement in the online stage.} 

\section*{Acknowledgments.}
The first author was funded by a PhD studentship from the Engineering and Physical Sciences Research Council (EPSRC), under grant EP/W523811/1.


\small
\bibliographystyle{apalike} 
\bibliography{my_collection} 


%
%
%

\newpage
\Large \textbf{Online Appendices}\\
\normalsize

 \begin{appendices}
 
 \section {Proof of Proposition \ref{prop1}.}\label{AppA}
 
We prove the result by showing that our finite-state MDP belongs to the \emph{communicating} class of MDP models, as defined in \cite{Puterman1994} (p. 348). To this end, it will be sufficient to prove the lemma below.
 
 \begin{lemma}For any two states $\vec{x},\vec{x}'\in S$ there exists a stationary policy $\theta$ such that $\vec{x}'$ is accessible from $\vec{x}$ in the Markov chain induced by $\theta$.\label{AppA_lem}
\end{lemma}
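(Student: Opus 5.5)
Fix $\vec{x}=(i,(x_1,\dots,x_m))$ and $\vec{x}'=(i',(x_1',\dots,x_m'))$. I will exhibit a finite sequence of states $\vec{x}=\vec{y}_0,\vec{y}_1,\dots,\vec{y}_L=\vec{x}'$ in which each consecutive pair is linked by a transition of positive probability under some action available at $\vec{y}_{\ell}$. I will also make sure that no state occurs twice in the sequence. Then I define a stationary policy $\theta$ by setting $\theta(\vec{y}_\ell)$ equal to that action for each $\ell<L$, and choosing arbitrary actions elsewhere. Because the states are distinct, this definition is consistent, and the path has positive probability under $\theta$, so $\vec{x}'$ is accessible from $\vec{x}$.

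The path has three phases.

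\emph{Phase 1 (degrade everything).} Starting from $\vec{x}$, I use only degradation transitions to raise each machine to $K_j$, handling the machines one at a time. Each step has probability $\lambda_j>0$ under any action, by (\ref{trans_probs}). For definiteness I take the action ``stay at $i$'' throughout this phase. The machine coordinates strictly increase in the componentwise order, so the states in this phase are distinct. The phase ends at $(i,(K_1,\dots,K_m))$.

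\emph{Phase 2 (repair downwards).} Because $G$ is connected, I can travel along shortest paths. For each machine $j$ with $x_j'<K_j$, in some fixed order, I move the repairer to node $j$ using switch transitions (probability $\tau>0$ with $a$ equal to the next node on the path). Once at $j$, I apply repair transitions (probability $\mu_j>0$ with $a=j$) until $x_j=x_j'$. After all such machines are handled, I travel along a shortest path to $i'$.

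\emph{Distinctness in Phase 2.} This is the main point of care. Every repair strictly decreases the vector $(x_1,\dots,x_m)$, and machine coordinates never increase during this phase. The movement segments between repairs follow shortest paths and so visit distinct nodes, and each segment occurs at a fixed machine vector that differs from the vectors of the other segments. Hence Phase 2 visits distinct states, and none of them coincides with a Phase 1 state. The only possible overlap is the junction state $(i,(K_1,\dots,K_m))$, and that state occurs only once.

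A single degenerate case needs no separate treatment. If $\vec{x}'$ itself appears among the Phase 1 states, I simply truncate the path at its first occurrence, since any subpath that starts at $\vec{x}$ and ends at the first visit to $\vec{x}'$ is still a valid path of distinct states. The path also reaches $\vec{x}'$ exactly, because $i'$ is the final location and each machine ends at $x_j'$. Every step uses an action in $A_{\vec{y}_\ell}$ with positive probability under (\ref{trans_probs}), so the lemma follows. The communicating property, and hence Proposition \ref{prop1}, then follows from \cite{Puterman1994}.
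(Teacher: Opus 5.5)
Your argument takes a different route from the paper's. The paper fixes one rule-based stationary policy in advance: when all machines are pristine, head along a shortest path to $i'$; otherwise head to a most-damaged machine and repair it. Because the policy is defined as a rule, it is automatically consistent. The paper then exhibits a fortuitous event sequence. No degradations occur until every machine is repaired and the repairer has reached $i'$. After that only degradations occur, and these build up $(x_1',\dots,x_m')$ at $i'$ whatever action the rule prescribes. You reverse the order: degrade everything to $(K_1,\dots,K_m)$ first, then repair down and travel, and read $\theta$ off the path itself. This spares you from checking that a heuristic rule actually empties the system. The cost is that you must show the action assignments along the path are consistent, and that is where your write-up has an error.

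Your claim that no Phase~2 state coincides with a Phase~1 state is false. Here is a counterexample.
\begin{itemize}
\item Take three machines, with edges $\{1,3\}$ and $\{1,2\}$ only, and $K_j=1$ for all $j$.
\item Let $\vec{x}=(1,(0,0,0))$, with Phase~1 degrading machines $1,2,3$ in that order.
\item Let $\vec{x}'=(2,(1,0,0))$. This is not a Phase~1 state, so your truncation remark does not apply.
\item If Phase~2 repairs machine $3$ and then machine $2$, the trip from $3$ to $2$ passes through $(1,(1,1,0))$ with action ``move to $2$''.
\item Phase~1 already visited that same state, where you chose the action ``stay at $1$''.
\item Repairing in the order $2,3$ instead produces the same clash at $(1,(1,0,0))$.
\end{itemize}
Your monotonicity-plus-shortest-path argument only shows that the Phase~2 states are distinct among themselves.

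The fix is short. Every Phase~1 step is a degradation, and its probability $\lambda_j$ is the same under every action, so Phase~1 imposes no constraint on $\theta$. Define $\theta$ only on the Phase~2 states and arbitrarily elsewhere; the whole path still has positive probability.

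Alternatively, erase loops from any positive-probability walk. If $\vec{y}_s=\vec{y}_t$ with $s<t$, delete $\vec{y}_{s+1},\dots,\vec{y}_t$ and use at $\vec{y}_s$ the action from step $t$. Repeating this yields a simple path, which removes the need for any distinctness bookkeeping. With either fix, your proof is correct.
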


The lemma is proved as follows. Let $\vec{x}=(i,(x_1,...,x_m))$ and $\vec{x}'=(i',(x_1',...,x_m'))$ be two given states in $S$.  Let $\theta$ be a stationary policy that chooses an action under state $\vec{x}$ as follows:\\
\vspace{-2.5mm}
\begin{itemize}
\item If $x_1=x_2=...=x_m=0$ then we identify the shortest path from $i$ to $i'$ and attempt to take the first step along that path.  (If $i=i'$ then we just remain at $i$.)
\item If $x_j>0$ for at least one machine $j\in M$ then we identify the `most-damaged' machine, i.e. the machine $j$ with $x_j\geq x_l$ for all $l\in M$.  In case of ties, we assume that $j$ is selected according to some pre-determined priority ordering of the machines.  If the repairer is already at node $j$ (i.e. $j=i$), then it remains there.  Otherwise, the repairer identifies the shortest path from its current location $i$ to node $j$, and attempts to take the first step along that path.\\
\end{itemize}
\vspace{-2.5mm}

For clarity, the `shortest path' between two nodes $i,i'\in V$ means the shortest possible sequence of connected nodes from $i$ to $i'$.  In case two or more paths are tied for the shortest distance, the path can be chosen according to some fixed priority ordering of the nodes.  Also, note that the rules above are implemented dynamically at each time step, so it may happen (for example) that the repairer begins moving towards a particular machine $j$ with $x_j\geq x_l$ for all $l\in M$ but, during the switching process, another machine $r$ degrades to the point that $x_j<x_r$ and therefore the repairer changes course and moves towards node $r$ instead.

To prove the proposition, it suffices to show that there is a possible sequence of random events that causes the system to reach state $\vec{x}'$ when the policy $\theta$ described above is followed.  In the first case ($x_1=x_2=...=x_m=0$), it is clear that the state $(i',(0,0,...,0))$ can be reached if none of the machines degrade before the repairer arrives at node $i'$.  If the system is in state $(i',(0,0,...,0))$ and one of the machines (say machine $1$) degrades, then the system transitions to state $(i',(1,0,...,0))$ and according to the rules above, the repairer should attempt to move to the first node on the shortest path from $i'$ to $1$.  However, with probability $1-\tau$, the switch `fails' and the repairer remains at $i'$.  Furthermore, it is possible for other machines to deteriorate (possibly multiple times) before the switch eventually succeeds.  (It may also happen that $i'=1$, in which case the repairer remains at machine $1$ and attempts to repair it, but degradations happen while the repair is in progress.)  Using this reasoning, we can see that the state $\vec{x}'=(i',x_1',...,x_m'))$ (where each $x_j'$ is an arbitrary value in $\{0,...,K_j\}$) is accessible from $(i,(0,0,...0))$ via an appropriate sequence of random degradation events.

In the second case ($x_j>0$ for at least one machine $j\in M$), suppose that no further degradations occur until the repairer has restored all of the machines to their `pristine' states. This fortuitous sequence of events allows the system to eventually transition to state $(j,(0,0,...,0))$, where in this case $j\in M$ denotes the last machine to be fully repaired.  We can then simply repeat the arguments given for the previous case, replacing $i$ with $j$, in order to show that state $\vec{x}'=(i',(x_1',...,x_m'))$ can eventually be reached.  This completes the proof of the lemma, and the result follows from the theory in \cite{Puterman1994} (see Theorem 8.3.2, p. 351). \hfill $\Box$\\

 \section {Proof of Proposition \ref{prop2}.}\label{AppB}

In this proof we consider an arbitrary stationary policy, $\theta$. Since $\theta$ is not necessarily optimal, it may be the case that the average cost $g_\theta(\vec{x})$ depends on the initial state $\vec{x}$. Let $P_\theta$ denote the transition matrix under $\theta$, consisting of the one-step transition probabilities $p_{\vec{x},\vec{y}}(\theta(\vec{x}))$ defined in (\ref{trans_probs}), where $\theta(\vec{x})$ is the action chosen at state $\vec{x}$ under $\theta$. Since the state space $S$ is finite and $P_\theta$ is a stochastic matrix, it follows from Proposition 8.1.1 in \cite{Puterman1994} (p. 333) that 
$$g_\theta(\vec{x})=\sum_{\vec{y}\in S}\pi_\theta(\vec{x},\vec{y})c(\vec{y}),$$
where $\pi_\theta(\vec{x},\vec{y})=\lim_{k\rightarrow\infty}\mathbb{P}_\theta(\vec{x}_k=\vec{y}\;|\;\vec{x}_0=\vec{x})$ denotes the limiting probability of moving from state $\vec{x}$ to state $\vec{y}$ in $k$ transitions under policy $\theta$, and this probability can be obtained as the entry in the row corresponding to state $\vec{x}$ and the column corresponding to state $\vec{y}$ in the matrix $P_\theta^*:=\lim_{k\rightarrow\infty}(P_\theta)^k$. 
It follows that equation (\ref{prop2_eqn}) is equivalent to
\begin{equation}\sum_{\vec{y}\in S}\pi_\theta(\vec{x},\vec{y})c(\vec{y})=\sum_{\vec{y}\in S}\pi_\theta(\vec{x},\vec{y})\tilde{c}(\vec{y},\theta(\vec{y}))\;\;\;\;\forall\vec{x}\in S.\label{prop2_eqn1}\end{equation}
Let the initial state $\vec{x}$ be fixed. For each machine $j\in M$ and $k\in\{1,2,...,K_j\}$, let $G_{j,k}$ denote the set of states $\vec{y}$ under which the repairer is at node $j$, $y_j=k$ and the action chosen under $\theta$ is to remain at node $j$.  By considering the costs incurred at each of the individual machines, it may be seen that equation (\ref{prop2_eqn1}) can be expressed as
$$\sum_{j\in M}\sum_{\vec{y}\in S}\pi_\theta(\vec{x},\vec{y})f_j(y_j)=\sum\limits_{j\in M}f_j(K_j)-\sum_{j\in M}\sum_{k=1}^{K_j} \sum_{\vec{y}\in G_{j,k}}\pi_\theta(\vec{x},\vec{y})\dfrac{\mu_j}{\lambda_j}[f_j(K_j)-f_j(k-1)].$$
It will therefore be sufficient to show that, for an arbitrary fixed $j\in M$, we have
\begin{equation}\sum_{\vec{y}\in S}\pi_\theta(\vec{x},\vec{y})f_j(y_j)=f_j(K_j)-\sum_{k=1}^{K_j} \sum_{\vec{y}\in G_{j,k}}\pi_\theta(\vec{x},\vec{y})\dfrac{\mu_j}{\lambda_j}[f_j(K_j)-f_j(k-1)].\label{prop2_eqn2}\end{equation}
Using the detailed balance equations for ergodic Markov chains (see, for example, \cite{Cinlar1975}), the rate at which machine $j$ transitions from state $y_j$ to $y_{j+1}$ under $\theta$ must equal the rate at which it transitions from $y_{j+1}$ to $y_j$.  This implies that
$$\lambda_j\sum_{\vec{y}\in S:y_j=k}\pi_\theta(\vec{x},\vec{y})=\;\;\mu_j\sum_{\vec{y}\in G_{j,k+1}}\pi_\theta(\vec{x},\vec{y})\;\;\;\;\;\;\forall k\in\{0,1,...,K_j-1\}.$$
Therefore the right-hand side of (\ref{prop2_eqn2}) can be written equivalently as
\begin{equation}f_j(K_j)-\sum_{k=0}^{K_j-1} \sum_{\vec{y}\in S:y_j=k}\pi_\theta(\vec{x},\vec{y})[f_j(K_j)-f_j(k)].\label{prop2_eqn3}\end{equation}
Then, using the fact that
$$\sum_{k=0}^{K_j}\sum_{\vec{y}\in S:y_j=k}\pi_\theta(\vec{x},\vec{y})=1,$$
we can write (\ref{prop2_eqn3}) in the form
\begin{align}&\sum_{k=0}^{K_j}\sum_{\vec{y}\in S:y_j=k}\pi_\theta(\vec{x},\vec{y})f_j(K_j)-\sum_{k=0}^{K_j-1} \sum_{\vec{y}\in S:y_j=k}\pi_\theta(\vec{x},\vec{y})[f_j(K_j)-f_j(k)]\nonumber\\
&=\sum_{\vec{y}\in S:y_j=K_j}\pi_\theta(\vec{x},\vec{y})f_j(K_j)+\sum_{k=0}^{K_j-1} \sum_{\vec{y}\in S:y_j=k}\pi_\theta(\vec{x},\vec{y})f_j(k)\nonumber\\[8pt]
&=\sum_{\vec{y}\in S}\pi_\theta(\vec{x},\vec{y})f_j(y_j)\nonumber\end{align}
which equals the left-hand side of (\ref{prop2_eqn2}) as required.  This confirms that the average costs for machine $j$ are equal under both cost formulations, and since this argument can be repeated for each machine $j\in M$ and initial state $\vec{x}\in S$, the proof is complete.  \hfill $\Box$\\

\section {Proof of Proposition \ref{prop3}}\label{AppC}

We assume that (i) there are $m$ machines arranged in a complete graph with no intermediate stages, i.e., $V=M$; (ii) for all $j \in M$, $K_j = 1$; (iii) all $m$ machines have identical parameter settings. Under these conditions, the state space has the form
$$S=\left\{(i,(x_1,...,x_m))\;|\;i\in M,\;x_j\in\{0,1\}\text{ for }j\in M\right\}.$$
Machine $i\in M$ is working if $x_i=0$, and has failed if $x_i=1$. Throughout this proof we omit the subscripts on the degradation rates $\lambda_j$, the repair rates $\mu_j$ and the cost functions $f_j$, since the third assumption implies that these are the same for all $j\in M$. Also, given that each machine has only two possible conditions, we will depart from our usual notational convention and use $k$ as a time step index rather than an indicator of a machine's condition.

The general strategy of this proof is to determine the decisions made by the index policy $\theta^{\text{IND}}$ under these conditions, and then show that these coincide with the decisions made by an optimal policy. First, suppose that the system is operating under the index policy. We can consider three possible cases: (a) the repairer is at a node $i$ with $x_i=1$; (b) the repairer is at a node $i$ with $x_i=0$ and there is at least one node $j\neq i$ with $x_j=1$; (c) the repairer is at a node $i$ with $x_i=0$ and we also have $x_j=0$ for all $j\neq i$. In case (a), the repairer has a choice between remaining at the failed machine $i$ or switching to an alternative machine. Recall that the index policy makes decisions using the indices $\Phi_i^{\text{stay}}(x_i)$, $\Phi_{ij}^{\text{move}}(x_j)$ and $\Phi_{ij}^{\text{wait}}(x_j)$ defined in (\ref{stay_index}), (\ref{move_index}) and (\ref{wait_index}) respectively. Given the assumption that $K_j=1$ for all $j\in S$, the index for remaining at node $i$ simplifies to
\begin{equation}\Phi_i^{\text{stay}}(x_i)=\Phi_i^{\text{stay}}(1)=\frac{\mathbb{E}[R_i(1)]}{\mathbb{E}[T_i(1)]}=\frac{\mu}{\lambda}f(1),\label{prop3_eq0a}\end{equation}
which is simply equal to the reward rate earned while the machine is being repaired. Next, consider the index for moving to an alternative node $j\neq i$. According to the rules of the heuristic, the repairer should only consider switching to nodes belonging to the set $J$ defined in (\ref{J_formula}). Thus, we need to check whether $\Phi_{ij}^{\text{move}}(x_j)\geq\Phi_{ij}^{\text{wait}}(x_j)$ for each $j\neq i$. We can express these indices as follows:

\begin{align}&\Phi_{ij}^{\text{move}}(x_j)=\begin{cases}
\mathbb{P}(X_j=1)\;\dfrac{\mathbb{E}[R_j(1)]}{\mathbb{E}[D_{ij}|X_j=1]+\mathbb{E}[T_j(1)]}=\dfrac{\mu\tau f(1)}{\tau^2+(2\mu+\lambda)\tau+\mu\lambda},&\text{ if }x_j=0,\\[16pt]
\dfrac{\mathbb{E}[R_j(1)]}{\mathbb{E}[D_{ij}]+\mathbb{E}[T_j(1)]}=\left(\dfrac{\tau}{\mu+\tau}\right)\dfrac{\mu}{\lambda}f(1),&\text{ if }x_j=1,\\[12pt]
\end{cases}\label{prop3_eq0b}\\[8pt]
&\Phi_{ij}^{\text{wait}}(x_j)=\dfrac{\mathbb{E}[R_j(1)]}{1/\lambda+\mathbb{E}[D_{ij}]+\mathbb{E}[T_j(1)]}=\dfrac{\mu\tau f(1)}{(\mu+\lambda)\tau+\mu\lambda}.
\label{prop3_eq0c}\end{align}

By comparing the expressions in (\ref{prop3_eq0b}) and (\ref{prop3_eq0c}) we can see that $\Phi_{ij}^{\text{move}}(x_j)<\Phi_{ij}^{\text{wait}}(x_j)$ if $x_j=0$, but $\Phi_{ij}^{\text{move}}(x_j)>\Phi_{ij}^{\text{wait}}(x_j)$ if $x_j=1$. Therefore the repairer only considers switching to node $j$ if $x_j=1$. However, even in this case $\Phi_{ij}^{\text{move}}(x_j)$ is smaller than the index $\Phi_i^{\text{stay}}(x_i)$ in (\ref{prop3_eq0a}) (noting that $R_i(1)\stackrel{\text{dist}}{=}R_j(1)$ and $T_i(1)\stackrel{\text{dist}}{=}T_j(1)$ due to homogeneous parameters), so the repairer always prefers to remain at node $i$ if $x_i=1$, regardless of the $x_j$ values at other nodes. 

Next, consider case (b), in which $x_i=0$ and there is a node $j\neq i$ with $x_j=1$. In this case, from (\ref{stay_index}) we have $\Phi_i^{\text{stay}}(x_i)=0$, while from (\ref{prop3_eq0b}) and (\ref{prop3_eq0c}) we can see that $\Phi_{ij}^{\text{move}}(x_j)$ is strictly positive and greater than $\Phi_{ij}^{\text{wait}}(x_j)$. Therefore the repairer chooses to switch to move $j$ in this case.

Finally, consider case (c), in which $x_j=0$ for all $j\in M$. In this case the repairer chooses to move to the `optimal idling position', $i^*$ (or remain if it is already there). Under the stated assumptions, the quantity $\Psi(i)$ in (\ref{new_psi_formula}) is the same for all $i\in M$, so $i^*$ can be chosen arbitrarily as any of the nodes in $M$. This is a rather trivial case and we will show later in the proof that any action by the repairer is optimal in this situation.

In the next stage of the proof we will show that there exists an optimal policy that makes the same decisions as the index policy. First, we introduce some terminology. Consider two distinct states $\vec{x}=(i,(x_1,...,x_m))$ and $\vec{x}'=(i',(x_1',...,x_m'))$. We say that states $\vec{x}$ and $\vec{x}'$ are \emph{symmetric} if $x_i=x_{i'}'$ and $\sum_{j\in M} x_j=\sum_{j\in M} x_j'$. That is, the total number of failed machines is the same under both states, and the repairer is either at a working machine under both states or at a failed machine under both states. For example, in a problem with 3 machines, we say that states $(1,(1,0,1))$ and $(2,(0,1,1))$ are symmetric, but the states $(1,(0,0,1))$ and $(2,(0,1,0))$ are not symmetric (because the repairer is at a working machine in the first case, but a failed machine in the second case). 


Consider a finite-horizon version of the discrete-time MDP formulated in Section \ref{sec:formulation} (following uniformization), and let $w_k(\vec{x})$ denote the minimum possible expected total cost that can be achieved over $k$ time steps, given that the initial state is $\vec{x}\in S$. In order to make progress we will need to use the following three properties of the function $w_k$, which we label (WN1)-(WN3) for convenience:\\
\begin{enumerate}
\item (WN1) If states $\vec{x}$ and $\vec{x}'$ are symmetric, then $w_k(\vec{x})=w_k(\vec{x}')$ for all $k\in\mathbb{N}$.
\item (WN2) If states $\vec{x}=(i,(x_1,...,x_m))$ and $\vec{x}'=(i',(x_1',...,x_m'))$ are identical except that $x_j=0$ and $x_j'=1$ for some $j\in M$, then $w_k(\vec{x})\leq w_k(\vec{x}')$ for all $k\in\mathbb{N}$.
\item (WN3) Suppose states $\vec{x}=(i,(x_1,...,x_m))$ and $\vec{x}'=(i',(x_1',...,x_m'))$ are identical except that $i\neq i'$. If $x_i=1$ and $x_{i'}=0$, then $w_k(\vec{x})\leq w_k(\vec{x}')$ for all $k\in\mathbb{N}$.\\
\end{enumerate}
Property (WN1) states that the optimal expected total cost remains the same if the initial state $\vec{x}$ is replaced by one of its symmetric states. Indeed, if $\vec{x}$ and $\vec{x}'$ are symmetric then $w_k(\vec{x})$ and $w_k(\vec{x}')$ must be identical, since the decision-maker is faced with the same problem in both cases (one may observe that the states $\vec{x}$ and $\vec{x}'$ are identical up to a renumbering of the nodes in $V$). Property (WN2) states that it is better for machine $j$ to be working than failed (if all else is equal), which is obvious because a failed machine will cause an extra cost to be incurred. Property (WN3) states that it is better for the repairer to be located at a failed machine than a working machine (if all else is equal), which makes sense because being at a failed machine allows the repairer to commence repairs without any delay. All three of these properties can be proved using induction on $k$. We omit the induction proofs for brevity.

Next, we consider the action that should be chosen in order to minimize the expected total cost over $(k+1)$ time steps, starting from state $\vec{x}=(i,(x_1,...,x_m))$. In general, we have
\begin{equation}w_{k+1}(\vec{x})=\min_{j\in M}\{q_{k+1}(\vec{x},j)\},\label{prop3_eq1}\end{equation}
where $q_{k+1}(\vec{x},j)$ is the expected total cost for choosing action $j\in M$ and then following an optimal policy for the remaining $k$ time steps. The expression for $q_{k+1}(\vec{x},j)$ takes different forms depending on whether the action $j$ is to remain at the current node $i$ or to switch to an alternative node. In the former case, it is also useful to distinguish between the cases $x_i=0$ and $x_i=1$. Let $M_0(\vec{x})$ and $M_1(\vec{x})$ be the total numbers of machines that are working and failed under state $\vec{x}$, respectively. Then, using the transition probabilities in (\ref{trans_probs}), we have

\begin{equation}q_{k+1}(\vec{x},j)=\begin{cases}
M_1(\vec{x})f(1)+M_0(\vec{x})\lambda w_k(\vec{x}^{j+})\\[8pt]
+\left(1-M_0(\vec{x})\lambda\right)w_k(\vec{x}),&\text{ if }j=i\text{ and }x_i=0,\\[12pt]
M_1(\vec{x})f(1)+M_0(\vec{x})\lambda w_k(\vec{x}^{j+})+\mu w_k(\vec{x}^{j-})\\[8pt]
+\left(1-M_0(\vec{x})\lambda-\mu\right)w_k(\vec{x}),&\text{ if }j=i\text{ and }x_i=1,\\[12pt]
M_1(\vec{x})f(1)+M_0(\vec{x})\lambda w_k(\vec{x}^{j+})+\tau w_k(\vec{x}^{\rightarrow j})\\[8pt]
+\left(1-M_0(\vec{x})\lambda-\tau\right)w_k(\vec{x}),&\text{ if }j\neq i,\end{cases}\label{prop3_eq2}\end{equation}
where $\vec{x}^{j+}$ denotes a state identical to $\vec{x}$ except that the component $x_j$ is changed from 0 to 1, $\vec{x}^{j-}$ denotes a state identical to $\vec{x}$ except that the component $x_j$ is changed from 1 to 0, and $\vec{x}^{\rightarrow j}$ denotes a state identical to $\vec{x}$ except that the repairer's location is changed to $j$.

Next, we consider the same three cases (a)-(c) discussed earlier and, for each case, show that the action chosen by the index policy (identified earlier) is also the action that should be chosen in order to minimize the finite-horizon expected total cost. Case (a) is where the repairer is at a machine $i$ with $x_i=1$. In order to show that remaining at machine $i$ minimizes the expected total cost over $k+1$ stages, we need to show that $q_{k+1}(\vec{x},i)\leq q_{k+1}(\vec{x},j)$ for all $j\neq i$. Using the expressions in (\ref{prop3_eq2}), it can be checked that
\begin{equation}q_{k+1}(\vec{x},i)- q_{k+1}(\vec{x},j)=\mu(w_k(\vec{x}^{j-})-w_k(\vec{x}))-\tau(w_k(\vec{x}^{\rightarrow j})-w_k(\vec{x})).\label{prop3_eq3}\end{equation}
By Property (WN2) we have $w_k(\vec{x}^{j-})-w_k(\vec{x})\leq 0$. If $j$ is a node with $x_j=1$ then $x$ and $x^{\rightarrow j}$ are symmetric states, so it follows from (WN1) that $w_k(\vec{x}^{\rightarrow j})-w_k(\vec{x})=0$. On the other hand, if $j$ is a node with $x_j=0$ then by (WN3) we have $w_k(\vec{x}^{\rightarrow j})-w_k(\vec{x})\geq 0$. It follows that the expression in (\ref{prop3_eq3}) is non-positive in all cases, so an optimal policy will choose to remain at node $i$ if $x_i=1$. Next, consider case (b), where the repairer is at a node $i$ with $x_i=0$ and there is at least one node $j$ with $x_j=1$. In this case, from (\ref{prop3_eq2}) we obtain
\begin{equation}q_{k+1}(\vec{x},i)- q_{k+1}(\vec{x},j)=-\tau(w_k(\vec{x}^{\rightarrow j})-w_k(\vec{x})),\label{prop3_eq4}\end{equation}
and by (WN3) we have $w_k(\vec{x}^{\rightarrow j})\leq w_k(\vec{x})$, so the expression in (\ref{prop3_eq4}) is non-negative. Hence, an optimal policy will choose to switch to node $j$ in this scenario. Finally, consider case (c), where the repairer is at a node $i$ with $x_i=0$ and we also have $x_j=0$ for all $j\neq i$. The expression in (\ref{prop3_eq4}) also applies to this case, but this time (WN1) implies that $w_k(\vec{x}^{\rightarrow j})-w_k(\vec{x})$ is equal to zero because states $\vec{x}$ and $\vec{x}^{\rightarrow j}$ are symmetric. Essentially, this is a trivial case in which any action is optimal, including the action taken by the index policy.

In summary, we have shown that the actions chosen by the index policy in the various possible cases (a)-(c) coincide with the actions chosen by an optimal policy in a finite-horizon problem in which the objective is to minimize total cost over $k$ stages (for arbitrary $k\in\mathbb{N}$). If we define $h(\vec{x}):=\lim_{k\rightarrow \infty}(w_k(\vec{x})-w_k(\vec{z}))$, where $\vec{z}$ is a fixed reference state in $S$, then it follows from standard theory (see \cite{Puterman1994}, p. 373) that the function $h$ satisfies the optimality equations (\ref{opt_eqs}). Hence, by our preceding arguments, the actions chosen by the index policy always attain the minimum on the right-hand side in (\ref{opt_eqs}), and the index policy is indeed optimal. \hfill $\Box$\\

\section{\textcolor{black}{Necessity of the conditions (i)-(iii) in Proposition \ref{prop3}} \label{Appagainstprop3}}

\textcolor{black}{In Proposition \ref{prop3}, three conditions are stated for the optimality of the index policy: (i) all machines are directly connected to each other; that is, $V$ is a complete graph; (ii) each machine has only two possible states; that is, $K_j=1$ for each $j\in M$; (iii) the degradation rates, repair rates and cost functions are the same for all machines. In this appendix we consider each of the conditions (i)-(iii) in turn and, for each one, present a counter-example showing that the index policy may not be optimal if the condition is relaxed while the other two conditions are retained.}

\textcolor{black}{The results described in examples (a)-(c) below are obtained by using dynamic programming (DP) methods to calculate the optimal average costs $g^*$ and comparing these with the average costs $g_{\theta^{\text{IND}}}$ given by the index policy. To obtain the optimal values $g^*$ we use policy iteration, as specified in Appendix \ref{AppPIA}. To obtain the index policy values $g_{\theta^{\text{IND}}}$ we simply evaluate the index policy by using the evaluation step in policy iteration, omitting any improvement steps.}

\textcolor{black}{Our counter-examples are described below.}
\textcolor{black}{
\begin{enumerate}[(a)]
    \item Consider a case where conditions (ii) and (iii) in Proposition \ref{prop3} hold, but condition (i) is violated. We consider a star network as defined in Definition \ref{star_def}, with radius $r = 1$ and $m = 3$, and $K_1 = K_2 = K_3 = 1$. The degradation rates are $\lambda_1 = \lambda_2 = \lambda_3 = 0.04$, the repair rates are $\mu_1 = \mu_2 = \mu_3 = 0.12$, and the switching rate is $\tau = 0.024$. The cost functions are of type 1 (see Appendix \ref{AppGen}), with $c_1 = c_2 = c_3 = 1$. In this case, $g_{\theta^{\text{IND}}} = 2.37$, whereas $g^* = 2.25$.
    \item Consider a case where conditions (i) and (iii) in Proposition \ref{prop3} hold, while condition (ii) is violated. We consider a system with $m = 3$ machines, all directly connected to each other, and $K_1 = K_2 = K_3 = 2$. The degradation rates are $\lambda_1 = \lambda_2 = \lambda_3 = 0.089$, the repair rates are $\mu_1 = \mu_2 = \mu_3 = 0.52$, and the switching rate is $\tau = 0.11$. The cost functions are of type 1 with $c_1 = c_2 = c_3 = 1$. In this case, $g_{\theta^{\text{IND}}} = 2.62$, whereas $g^* = 2.58$.
    \item Consider a case where conditions (i) and (ii) hold, while condition (iii) is violated. We consider a system with $m=3$ machines, all directly connected to each other, and $K_1 = K_2 = K_3 = 1$. We describe three subcases:
        \begin{itemize}
            \item (Heterogeneous degradation rates.) We take $\lambda_1 = 0.034$, $\lambda_2 = 0.16$, and $\lambda_3 = 0.055$. The repair rates are $\mu_1 = \mu_2 = \mu_3 = 0.74$, the switching rate is $\tau = 0.22$, and the cost functions are of type 1 with $c_1 = c_2 = c_3 = 1$. In this case, $g_{\theta^{\text{IND}}} = 0.85$, whereas $g^* = 0.80$.
            \item (Heterogeneous repair rates.) We take $\mu_1 = 0.82$, $\mu_2 = 0.12$, and $\mu_3 = 0.63$. The degradation rates are $\lambda_1 = \lambda_2 = \lambda_3 = 0.056$, the switching rate is $\tau = 0.15$, and the cost functions are of type 1 with $c_1 = c_2 = c_3 = 1$. In this case, $g_{\theta^{\text{IND}}} = 1.22$, whereas $g^* = 1.18$.
            \item (Heterogeneous cost parameters.) We consider cost functions of type 1 with $c_1 = 8.6$, $c_2 = 13.0$, and $c_3 = 8.1$. The degradation rates are $\lambda_1 = \lambda_2 = \lambda_3 = 0.14$, the repair rates are $\mu_1 = \mu_2 = \mu_3 = 0.56$, and the switching rate is $\tau = 0.36$. In this case, $g_{\theta^{\text{IND}}} = 13.15$, whereas $g^* = 12.98$.\\
        \end{itemize}
\end{enumerate}} 

\section {Proof of Proposition \ref{prop4}}\label{AppD}

Let $V$ be a star network as defined in Definition \ref{star_def}, with radius $r\geq 1$. The other conditions of the proposition are that $K_j = 1$ for all $j\in M$, all $m$ machines have identical parameter settings, and $\tau>2r\lambda$. The state space has the form
$$S=\left\{(i,(x_1,...,x_m))\;|\;i\in V,\;x_j\in\{0,1\}\text{ for }j\in M\right\}.$$
Machine $i\in M$ is working if $x_i=0$, and has failed if $x_i=1$. As in the proof of Proposition \ref{prop3} we will omit the subscripts on the degradation rates $\lambda_j$, repair rates $\mu_j$ and cost functions $f_j$, since these are assumed to be the same for all $j\in M$. Also, given that each machine has only two possible conditions, we will depart from our usual notational convention and use $k$ as a machine or time step index rather than an indicator of a machine's condition.

The proof of Proposition \ref{prop3} used dynamic programming arguments to show that the decisions made by the index policy are optimal, but this approach becomes more complicated in a network with intermediate stages, so instead we will use a slightly different approach. We will show that the decisions made by the index policy are \emph{greedy}, in the sense that they always minimize the expected amount of time until the repairer is next repairing a machine, and then show that a greedy policy is optimal under the conditions of the proposition.

The first step is to determine the decisions made under the index policy. In the arguments that follow, it will be useful to define $d(i,j)$ as the length of a shortest path between two nodes $i,j\in V$. We can consider five possible cases: (a) the repairer is at a machine $i\in M$ with $x_i=1$; (b) the repairer is at a machine $i\in M$ with $x_i=0$ and there is at least one machine $j\neq i$ with $x_j=1$; (c) the repairer is at a machine $i\in M$ with $x_i=0$ and we also have $x_j=0$ for all $j\neq i$; (d) The repairer is at an intermediate stage $i\in N$ and there is at least one machine $j\in M$ with $x_j=1$; (e) the repairer is at an intermediate stage $i\in N$ and we have $x_j=0$ for all $j\in M$. 

In case (a), the repairer has a choice between remaining at the failed machine $i$ or switching to an alternative machine. Recall that the index policy makes decisions using the indices $\Phi_i^{\text{stay}}(x_i)$, $\Phi_{ij}^{\text{move}}(x_j)$ and $\Phi_{ij}^{\text{wait}}(x_j)$ defined in (\ref{stay_index}), (\ref{move_index}) and (\ref{wait_index}) respectively. As in the proof of Proposition \ref{prop3}, the index for remaining at node $i$ simplifies to
\begin{equation}\Phi_i^{\text{stay}}(x_i)=\Phi_i^{\text{stay}}(1)=\frac{\mathbb{E}[R_i(1)]}{\mathbb{E}[T_i(1)]}=\frac{\mu}{\lambda}f(1).\label{prop4_eq0a}\end{equation}
Next, consider the index for moving to an alternative node $j\neq i$. According to the rules of the heuristic, the repairer should only consider switching to nodes belonging to the set $J$ defined in (\ref{J_formula}). The indices $\Phi_{ij}^{\text{move}}(x_j)$ and $\Phi_{ij}^{\text{wait}}(x_j)$ for $j\neq i$ can be expressed as
\begin{align}&\Phi_{ij}^{\text{move}}(x_j)=\begin{cases}
\mathbb{P}(X_j=1)\;\dfrac{\mathbb{E}[R_j(1)]}{\mathbb{E}[D_{ij}|X_j=1]+\mathbb{E}[T_j(1)]},&\text{ if }x_j=0,\\[16pt]
\dfrac{\mathbb{E}[R_j(1)]}{\mathbb{E}[D_{ij}]+\mathbb{E}[T_j(1)]},&\text{ if }x_j=1,\\[12pt]
\end{cases}\label{prop4_eq0b}\\[8pt]
&\Phi_{ij}^{\text{wait}}(x_j)=\dfrac{\mathbb{E}[R_j(1)]}{1/\lambda+\mathbb{E}[D_{ij}]+\mathbb{E}[T_j(1)]}.
\label{prop4_eq0c}\end{align}
It can be seen that if $x_i=1$ then $\Phi_i^{\text{stay}}(x_i)>\Phi_{ij}^{\text{move}}(x_j)$, regardless of whether or not node $j$ is included in the set $J$. Hence, the repairer will remain at node $i$ in this case. In case (b), where the repairer is at a node $i$ with $x_i=0$ and we also have $x_j=1$ for some $j\neq i$, it can be seen from the expressions above that $\Phi_{ij}^{\text{move}}(x_j)>\Phi_{ij}^{\text{wait}}(x_j)$, so machine $j$ is included in the set $J$ and furthermore the index for remaining at node $i$ is zero, so the repairer moves towards node $j$ in this case (this implies taking a step towards the center of the star network).

In case (c), where the repairer is at a machine $i$ with $i=0$ and we also have $x_j=0$ for all $j\neq i$, it is necessary to identify the node $i^*$ that minimizes the quantity $\Psi(i)$ defined in (\ref{new_psi_formula}). Given the layout of the star network (see Figure \ref{star_figure}), it is easy to see that $\Psi(i)$ is minimized by setting $i=s$, where $s$ is the center node. Hence, in this case (as in the previous case), the repairer should move from machine $i$ towards the center.

\begin{figure}[hbtp]
    \begin{center}
        \includeinkscape[scale = 0.6]{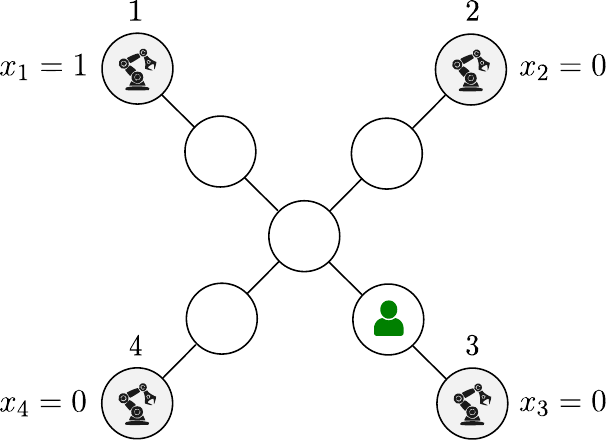}
    \end{center}
    \caption{A star network with 4 machines and a radius $r=2$, where the repairer is at an intermediate stage, machine 1 has failed with $x_1 = 1$ and other machines are working with $x_2 =x_3=x_4 = 0$.}
    \label{prop4_fig}
\end{figure}

In case (d), where the repairer is at an intermediate stage $i$ and there exists $j\in M$ with $x_j=1$, the rules of the heuristic imply that the repairer needs to identify the node $j$ that minimizes $\Phi_{ij}^{\text{move}}(x_j)$. Let $j$ be a fixed node in $V$ such that $x_j=1$. If $1\leq d(i,j)\leq r$ (implying that the repairer is somewhere between the center node and $j$), then it is clear that $\mathbb{E}[D_{ij}]\leq \mathbb{E}[D_{ik}]$ for any machine $k\neq j$ and therefore from (\ref{prop4_eq0b}) we have $\Phi_{ij}^{\text{move}}(x_j)\geq \Phi_{ik}^{\text{move}}(x_k)$. Therefore the repairer should move towards node $j$ in this situation. On the other hand, suppose we have $1\leq d(i,k)<r$ for some machine $k$ such that $x_k=0$ (see Figure \ref{prop4_fig}). In this case, the decision is less clear. The repairer could move towards the nearest machine $k$, in the hope that a degradation event will occur at that machine before the repairer arrives. However, we will show that the decision under the index policy is to move towards the damaged machine $j$, which is further away. In order to show that $\Phi_{ij}^{\text{move}}(1)\geq\Phi_{ik}^{\text{move}}(0)$, we begin by establishing some useful bounds. Firstly, we use the general fact that $1-x^p\leq k(1-x)$ for $k\in(0,1)$ and $p\in\mathbb{N}$ to show that
\begin{equation}\mathbb{P}(X_k=1)=1-\left(\frac{\tau}{\tau+\lambda}\right)^{d(i,k)}\leq \frac{d(i,k)\lambda}{\tau+\lambda}.\label{prop4_bound1}\end{equation}
Notably, this bound becomes tighter as $\tau/(\tau+\lambda)$ approaches one, which is relevant to our case because we assume $\tau>2r\lambda$. Secondly, we note that $\mathbb{E}[D_{ik}|X_k=1]$ is the expected total amount of time for $d(i,k)+1$ events to occur, where each event is either a completed switch or a degradation event at node $k$, and the latter type of event can only occur once. The expected amount of time for each of these $d(i,k)+1$ events is either $1/(\tau+\lambda)$ or $1/\tau$, depending on whether or not the degradation event has occurred yet. Hence,
\begin{equation}\mathbb{E}[D_{ik}|X_k=1]\geq\frac{d(i,k)+1}{\tau+\lambda}.\label{prop4_bound2}\end{equation}
Using the bounds in (\ref{prop4_bound1}) and (\ref{prop4_bound2}), we have
\begin{align}\Phi_{ik}^{\text{move}}(0)&=\mathbb{P}(X_k=1)\;\dfrac{\mathbb{E}[R_k(1)]}{\mathbb{E}[D_{ik}|X_k=1]+\mathbb{E}[T_k(1)]}\nonumber\\[8pt]
&\leq \frac{d(i,k)\lambda}{\tau+\lambda}\cdot\frac{\mathbb{E}[R_k(1)]}{(d(i,k)+1)/(\tau+\lambda)+\mathbb{E}[T_k(1)]}\nonumber\\[8pt]
&=\frac{d(i,k)\mu f(1)}{d(i,k)\mu+\lambda+\mu+\tau}.\label{prop4_eq1}\end{align}
On the other hand, using $E[D_{ij}]=(2r-d(i,k))/\tau$, the index for switching to machine $j$ is
\begin{equation}\Phi_{ij}^{\text{move}}(1)=\frac{\mathbb{E}[R_j(1)]}{\mathbb{E}[D_{ij}]+\mathbb{E}[T_j(1)]}=\frac{\tau\mu f(1)}{\lambda(\tau+(2r-d(i,k))\mu)}.\label{prop4_eq2}\end{equation}
Using (\ref{prop4_eq1}) and (\ref{prop4_eq2}), we find (after some algebra steps) that $\Phi_{ij}^{\text{move}}(1)$ is greater than or equal to the upper bound for $\Phi_{ik}^{\text{move}}(0)$ provided that
\begin{equation}\tau^2+(d(i,k)+1)\mu\tau\geq (d(i,k)-1)\lambda\tau+(2rd(i,k)-d(i,k)^2)\lambda\mu.\label{prop4_eq3}\end{equation}
Given that $\tau>2r\lambda$, it is sufficient to show that (\ref{prop4_eq3}) holds with $\lambda$ replaced by $\tau/(2r)$. After making this substitution and rearranging, we obtain
$$\frac{2r+d(i,k)^2}{2r}\mu\tau+\frac{2r-(d(i,k)-1)}{2r}\tau^2\geq 0,$$
which holds because $1\leq d(i,k)\leq r$. In conclusion, under the index policy the repairer chooses to move towards the damaged machine $j$ rather than the (nearer) undamaged machine $k$.

Finally, consider case (e), where the repairer is at an intermediate stage and $x_j=0$ for all $j\in M$. As noted in case (c), the quantity $\Psi(i)$ defined in (\ref{new_psi_formula}) is minimized when $i=s$, so the repairer moves towards the center in this situation.

In the next part of the proof, we consider cases (a)-(e) again and show that, in each case, the repairer's action under the index policy also minimizes the expected amount of time until repair. For clarity, we define the `time until repair' as the amount of time until the repairer chooses to remain at a damaged machine, and abbreviate it as TUR. In case (a), the TUR is trivially equal to zero under the index policy since the repairer remains at the damaged machine $i$. 

In case (b), the repairer is at a machine $i$ with $x_i=0$ and there exists $j\neq i$ with $x_j=1$. Suppose (for a contradiction) that the TUR is minimized by remaining at machine $i$. If machine $i$ is still undamaged at the next time step then, logically, the TUR is still minimized by remaining at node $i$, since the repairer is still in the same situation. (It may happen that a degradation event occurs at another machine $k\notin\{i,j\}$, but this does not affect the minimization of the TUR, since there is no reason for the repairer to prefer machine $k$ to machine $j$). Continuing this argument, it follows that the TUR is minimized by remaining at node $i$ until a degradation event occurs there. However, the expected time until this happens is $1/\lambda$, while the expected amount of time to move directly to machine $j$ is $2r/\tau$, and we have $2r/\tau<1/\lambda$ by assumption. Therefore, in fact, the repairer should not remain at machine $i$ and should take the only possible alternative action, which is to move towards the center of the network. Note that we have not shown, at this stage, that the repairer should move to machine $j$ without any interruption in order to minimize the TUR; we have only shown that if the repairer is at machine $i$ then it should take the first step towards the center. The actions chosen at intermediate stages will be discussed in case (d).

Next, we address cases (c) and (e) at the same time. In both of these cases we have $x_j=0$ for all $j\in M$. Let $i$ denote the repairer's current location, which could be either a machine or an intermediate stage. We note that the quantity $\Psi(i)$ in (\ref{new_psi_formula}) is the expected amount of time for the repairer to travel directly towards the next machine that degrades. It can easily be checked that $\Psi(i)$ increases with $d(i,s)$; that is, the further away the repairer gets from the center, the larger $\Psi(i)$ becomes. It follows that in order to minimize the TUR the repairer should travel towards the center node $s$. The index policy also directs the repairer to move to the center when all nodes are working, so again we find that it minimizes the TUR.

The only remaining case is (d), where the repairer is at an intermediate stage $i$ and there is at least one node $j$ with $x_j=1$. If $d(i,j)\leq r$ then it is obvious that the TUR is minimized by moving to machine $j$ without any interruption, which the index policy does. Thus, in order to avoid triviality, we should consider again the situation shown in Figure \ref{prop4_fig}, where the repairer's nearest node $k$ has $x_k=0$. In this case we can use an inductive argument to show that the TUR is minimized by moving towards the center. Initially, suppose $d(i,k)=1$, so the repairer is only one step from machine $k$. The TUR cannot be minimized by remaining at the intermediate stage $i$, since (using similar reasoning to case (b)) this would imply that the repairer should remain at node $i$ until a degradation event occurs at machine $k$, in which case the TUR is at least $1/\lambda$, which is greater than $(2r-1)/\tau$ (the TUR given by moving directly to machine $j$). On the other hand, the TUR cannot be minimized by moving to machine $k$ either, since we already showed in case (b) that if the repairer is at a machine $k$ with $x_k=0$ then it should move towards the center in order to minimize the TUR; therefore, moving from node $i$ to machine $k$ would result in back-and-forth movements between nodes $i$ and $k$ until a degradation occurs at machine $k$, and (again) the TUR would be at least $1/\lambda$. Hence, by process of elimination over the action set, we conclude that if $d(i,k)=1$ then the TUR is minimized by moving towards the center. An identical argument can be used when $d(i,k)=2$, using the fact that moving towards machine $k$ results in back-and-forth behavior between two intermediate stages and a TUR of at least $1/\lambda$. Continuing this argument, we conclude that the repairer should move towards the center and then continue towards machine $j$ (or another damaged machine) in order to minimize the TUR.

In summary, we have shown that in all of the cases (a)-(e) the action chosen by the index policy also minimizes the expected time until repair (TUR). The final stage of the proof is to show that a `greedy' policy that always minimizes the TUR is also optimal. In general, we can think of the evolution of the system as a sequence of time periods, where some of these periods are spent repairing a machine and some are spent performing other actions (such as moving between nodes in the network). Let these time periods be written as follows:
$$T_1,S_1,T_2,S_2,...,T_k,S_k,...$$
where each quantity $S_k$ (for $k\in\mathbb{N}$) is an amount of time spent repairing a machine, and each quantity $T_k$ is an amount of time spent performing other actions. 
For example, the repairer might begin at a machine $i$ with $x_i=0$, but then move towards a damaged machine $j$ and repair it until it is fixed, in which case $T_1$ is the amount of time spent moving towards machine $j$ and $S_1$ is the time spent performing the repair. The repairer earns a reward of $(\mu/\lambda)f(1)$ during each of the $S_k$ periods, and earns no reward during the $T_k$ periods. Thus, in order to maximize long-run average reward, the ratio $\sum_k S_k/(\sum_k (S_k+T_k))$ over a long time horizon should be as large as possible. It is clear from this argument that the repairer should never interrupt a repair in progress, since doing so would imply that (possibly) fewer degradation events would occur at other machines during the repair period, and therefore the repairer would not only earn a smaller reward from the current machine (due to interrupting the repair) but would also have fewer opportunities to earn rewards at other machines. We deduce that in order to maximize long-run average reward, the repairer should remain at a damaged machine if it is currently located there.

We will show that in order to maximize long-run average reward, the repairer should always greedily minimize the time until the next repair. This is not necessarily obvious, because if the repairer allows a particular non-repair time $T_k$ to increase then this might enable the next non-repair time $T_{k+1}$ to become smaller due to more degradation events occurring during the $k^{\text{th}}$ non-repair period and more opportunities for earning rewards becoming available. 

Let the system be initialized in an arbitrary state and let $j$ denote the first machine repaired by the index policy, given a particular sample path (i.e. a particular sequence of random events). Hence, $T_1$ is the time until the repairer begins repairing machine $j$ (this may be zero if the repairer is initially located at $j$). By our previous arguments, $\mathbb{E}[T_1]$ is minimized under the index policy, while $\mathbb{E}[S_1]$ is maximized (because the repair is not interrupted). When the repairer finishes repairing machine $j$, there are two possibilities: either (i) all machines are undamaged and the repairer has to wait until the next degradation event occurs, or (ii) there is at least one damaged machine, $j\neq i$, in the system. This idea is shown in Figure \ref{prop4_fig2}, in which the second non-repair period has been divided into subperiods, $A_2$ and $B_2$. We use $A_2$ to denote the waiting time until there is at least one damaged machine in the system (which is zero in the case that there is already a damaged machine when the first repair finishes), and $B_2=T_2-A_2$ to denote the additional time until the second repair period begins. In Figure \ref{prop4_fig2} we also use $t^{\text{deg}}$ to denote the time that the next degradation event occurs, given that there are no damaged machines at time $T_1+S_1$.

\begin{figure}[hbtp]
    \begin{center}
        \includegraphics[scale = 0.6]{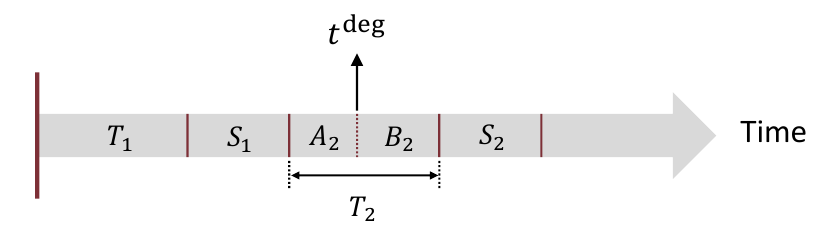}
    \end{center}
    \caption{Division of time into periods $T_1,S_1,T_2,S_2,...$ under the index heuristic.}
    \label{prop4_fig2}
\end{figure}

Given that we assume repairs are non-interrupted, $\mathbb{E}[S_1]$ is fixed and minimizing $\mathbb{E}[T_1]$ corresponds to maximizing $\mathbb{E}[A_2]$. If $A_2>0$ then the actions taken by the repairer during the time period $(T_1+S_1, T_1+S_1+A_2)$ ensure that the minimum TUR starting from time $T_1+S_1+A_2$ (denoted by $B_2$) is itself minimized. Intuitively, because the repairer moves towards the center of the network while it is waiting for the next degradation event to occur, it is in the best possible position to begin the second repair as early as possible. This ensures that the expected amount of non-repair time before starting the second repair, $\mathbb{E}[T_1+T_2]$, is minimized. We emphasize the subtle point that the index policy does not necessarily minimize $\mathbb{E}[T_2]$ (because $T_2$ includes a possible `waiting period' $A_2$), but the time spent in the waiting period works to the advantage of the index policy and ensures that the repairer spends as much time as possible moving towards the center, from which it follows that $\mathbb{E}[B_2]$ and $\mathbb{E}[T_1+T_2]$ are both minimized.

This argument can be continued in an inductive manner. In general we can replace $T_1$ with $\sum_{k=1}^K T_k$ in Figure \ref{prop4_fig2}, where $K\geq 1$, and use the inductive assumption that the index policy minimizes $\mathbb{E}[\sum_{k=1}^K T_k]$. Then the same arguments as above can be used to show that $\mathbb{E}[\sum_{k=1}^{K+1} T_k]$ is also minimized. Given that the $S_k$ values are independent and identically distributed (as noted above), it follows that the index policy maximizes the amount of time spent repairing over any finite time horizon, and therefore maximizes the expected total reward over a finite horizon. Therefore it must also maximize the long-run average reward. \hfill $\Box$\\

\section{Details of Online Policy Improvement}
\label{AppPI}

In this appendix we provide further details and pseudocode for the online policy improvement (OPI) method described in Section \ref{sec:polimp}. The method consists of an offline part, in which we acquire estimates of the value function $h_{\theta^{\text{M-IND}}}(\vec{x})$ under the modified index policy (referred to as the base policy) $\theta^{\text{M-IND}}$, and an online part, in which we apply an improvement step to the base policy (with the base policy also providing an insurance option in cases where an improving action cannot be identified with sufficient confidence). Both parts involve sampling trajectories of possible future events starting from a given state, and we present the pseudocode for this procedure as a separate module, called `SampleTrajectory'. We also present the offline part as two separate modules (a preparatory phase and a main phase), and the online part as a single module. In summary, the OPI method is presented here as 4 separate modules:

\begin{itemize}
\item SampleTrajectory (called upon by other modules)
\item Offline part - preparatory phase
\item Offline part - main phase
\item Online part
\end{itemize}

We briefly explain the steps of these modules in subsections \ref{AppE1}-\ref{AppE4}, and provide full pseudocode for all 4 modules in subsection \ref{AppE5}.

To simplify notation, we will denote the modified index policy throughout this appendix by $\theta \equiv \theta^{\textup{M-IND}}$. The offline and online parts use a dynamic array $U$ to store and update estimated average cost values $\hat{h}_{\theta}(\vec{x})$ for states $\vec{x}\in S$. This array also stores other statistics, including a sum of squared costs $\widehat{SS}_{\theta}(\vec{x})$, sum of squared weights $\widehat{W}_{\theta}(\vec{x})$, and observation count $\hat{s}_{\theta}(\vec{x})$ for each $\vec{x}\in U$. For readability, we drop the hats and $\theta$-subscripts in both the discussion and accompanying pseudocode, referring to these statistics as $h$-values, $SS$-values, $W$-values and $s$-values.\\

\subsection{SampleTrajectory Module}\label{AppE1}

The SampleTrajectory module requires certain inputs from the other modules, including a policy for selecting actions $\theta$, an estimate for the average cost $g_\theta$, an initial state $\vec{z}$, a reference state $\vec{u}_0$, an array $U$ with the latest gathered statistics (as described above) and an integer $p$ that determines how much information is recorded from each simulated trajectory. It generates a sample trajectory starting from $\vec{z}$ under policy $\theta$, and uses this to update the value estimates $h(\vec{x})$ and other statistics for newly encountered states $\vec{x}$. 

Trajectories are sampled by simulating the system evolution over a finite time, recording the cumulative cost $C$ and the total number of elapsed time steps $T$, until a stopping condition is satisfied. This condition is met if the simulated trajectory reaches either (i) a designated reference state $\vec{u}_0$, or (ii) a state already stored in the value estimate array $U$, excluding the initial state $\vec{z}$. At this point, the simulation terminates and the stopping state is labeled as $\vec{u}$.

During the simulation, each newly encountered state $\vec{x}$ (i.e. one not previously visited in the same trajectory) is recorded in a \texttt{visited} list, along with the cumulative cost $C_{\vec{x}}$ and time step $T_{\vec{x}}$ when $\vec{x}$ was first visited. Here, $C_{\vec{x}}$ represents the total accumulated cost from the start of the trajectory up to the first visit to $\vec{x}$, and $T_{\vec{x}}$ denotes the number of steps taken to reach $\vec{x}$ from the initial state.

Once the simulated trajectory ends, the algorithm processes the first $p$ visited states. For each such state $\vec{x}$, if it is not already included in $U$, \textcolor{black}{a new entry $U(\vec{x})=(h(\vec{x}), SS(\vec{x}), W(\vec{x}), s(\vec{x}))$ is created and initially set to $(0, 0, 0, 0)$} Otherwise, the current values are extracted directly from $U$ for update. Each update is computed using the bootstrapped estimate
$$
h^{(s)}(\vec{x}) \leftarrow (C - C_{\vec{x}}) + h(\vec{u}) - g_\theta \cdot (T - T_{\vec{x}}),
$$
where $s=s(\vec{x})$ is the observation count for state $\vec{x}$ and $h(\vec{u})$ denotes the latest available estimate for state $\vec{u}$, computed using the values $h^{(r)}(\vec{u})$ for $r=1,...,s(\vec{u})$. This corresponds to equation (\ref{offline_equation}) (please see the accompanying explanation in that section). We then update the estimate $h(\vec{x})$ using the rule
\begin{equation}h(\vec{x})\leftarrow (1-\alpha^{(s)}(\vec{x}))h(\vec{x})+\alpha^{(s)}(\vec{x})h^{(s)}(\vec{x}),\label{h_update_equation}\end{equation}
where $\alpha^{(s)}(\vec{x})$ is a learning parameter, given by
$$\alpha^{(s)}(\vec{x}) = \frac{10}{10 + s - 1}.$$
We note that (\ref{h_update_equation}) is a standard exponential averaging rule used in RL algorithms, which effectively states that the updated estimate $h(\vec{x})$ is computed as a weighted average of the old estimate and the newly-acquired observation (from the latest simulation). The second moment $SS(\vec{x})$, and sum of squared weights $W(\vec{x})$ are updated using a rule analogous to (\ref{h_update_equation}), and $U(\vec{x})$ is updated accordingly. The module returns the updated array $U$, the stopping state $\vec{u}$, and the total amount of time elapsed during the simulation, $\tau_\delta$.\\ 

\subsection{Offline part - preparatory phase}\label{AppE2}

In the preparatory phase we estimate the long-run average cost $ g_{\theta} $ and construct a representative set of system states. This module is divided into three stages:

\begin{enumerate}[(i)]
\item We begin by generating a core set of states $ Z^{\text{core}} $, consisting of one state $ \vec{z}_i $ for each machine $ i \in M $. Specifically, for each $ i\in M $, we simulate events starting from state $(i,(0,...,0))$ under the base policy $ \theta $ and identify the state visited most frequently during this simulation, which we denote by $ \vec{z}_i $. 
\item Subsequently, a separate long-run simulation from initial state $ (1, (0,...,0)) $ is used to estimate the average cost $ g_{\theta} $. During this simulation, we track how often the repairer visits the machines in $M$. The machine $ j^* $ with the highest visitation count is identified, and its associated core state $ \vec{z}_{j^*} $ is designated as the reference state $ \vec{u}_0 $. This reference state then becomes the initial member of the array $ U $ mentioned previously.
\item Next, we create a larger set $Z\supseteq Z^{\text{core}}$ to improve coverage of the state space. 
For each machine $ i $ we add the core state  $ \vec{z}_i \in Z^{\text{core}}$ to $Z$ and also add all states of the form $ \vec{z}_i^{\rightarrow j} $ for all nodes $j$ adjacent to $i$ in the network, where $ \vec{z}_i^{\rightarrow j} $ is identical to $\vec{z}_i$ except that the repairer's location is changed to $j$. Also, if $\vec{z}_i$ is a state with $i^{\text{th}}$ component greater than or equal to one, we include the state  $ \vec{z}_i^{i-} $ in $Z$, where $ \vec{z}_i^{i-} $ is identical to $\vec{z}_i$ except that the $i^{\text{th}}$ component is reduced by one. Including states of the form $ \vec{z}_i^{\rightarrow j} $ and $ \vec{z}_i^{i-} $ in $Z$ is beneficial for the online part that follows later.\\
\end{enumerate}

\vspace{0.5em}

\subsection{Offline part - main phase}\label{AppE3}

This module takes the set $Z$ from the preparatory phase as an input. It is divided into two stages:
\begin{enumerate}[(i)]
\item We consider each of the states $\vec{z}\in Z$ in turn. For each $\vec{z}\in Z$ we repeatedly sample trajectories of future events starting from $\vec{z}$ by using the SampleTrajectory module, and we update the array $U$ with the statistics ($h$-values, $SS$-values, $W$-values, and $s$-values) gathered from these trajectories. We also set $p=1$ within the SampleTrajectory module in this stage. This process continues (returning to the initial state $\vec{z}$ at the end of each trajectory) until either $R^{\text{off}}$ trajectories have been simulated (where $R^{\text{off}}$ is a large integer), or the cumulative simulation time exceeds a large threshold $ \tau_{\text{max}}$. At this point, we consider the next state in $Z$, or (if all states in $Z$ have been considered) move on to the next stage.
\item We consider each of the core states $ \vec{z} \in Z^{\text{core}}\subseteq Z $ in turn. For each state $ \vec{z} \in Z^{\text{core}} $ we sample a trajectory of future events starting from $\vec{z}$ and update the array $U$ with the statistics gathered, in the same way as in the previous stage. However, when the trajectory ends (which happens when the system reaches a state $\vec{u}\in U$), we use $\vec{u}$ as the initial state for the next trajectory, rather than returning to state $\vec{z}$. This creates a chain of successive trajectories, with the ending point of one trajectory acting as the starting point for the next. We also set $p=5$ within the SampleTrajectory module in this stage. As in the previous stage, this process continues until either $R^{\text{off}}$ trajectories have been simulated or the cumulative simulation time exceeds the threshold $ \tau_{\text{max}}$, at which point we consider the next state in $Z^{\text{core}}$, or (if all states in $Z^{\text{core}}$ have been considered) end the offline part.\\
\end{enumerate}

\subsection{Online part}\label{AppE4}

The online part is based on the paradigm of a `real' system that we are able to observe and control in a continuous manner over time. The aim is to apply an improvement step to the base policy $\theta$, making use of the statistical information gained from simulating the system evolving under $\theta$ in the offline part. In the absence of being able to observe a `real' system, we conduct a simulation of the system, and perform `nested' simulations within this in order to gather extra information about nearby states, as described in Section \ref{sec:polimp}.

At each iteration, the current system state $\vec{x}$ is observed and a local neighborhood $F(\vec{x})$ is constructed. This neighborhood includes (i) the current state $\vec{x}$ itself, (ii) states reachable by relocating the repairer to adjacent nodes, and (iii) a state resulting from repair completion if the repairer is currently at machine $i$ with $x_i \geq 1$. We then use the empirical data stored in $U$ to obtain, for each $\vec{y}\in F(\vec{x})$, a confidence interval for $h_\theta(\vec{y})$. The confidence interval is given by
\begin{equation}
    [h^-(\vec{y}), h^+(\vec{y})]= \left[h(\vec{y}) \pm z_{0.025} \sqrt{
        \left(\frac{SS(\vec{y}) - h(\vec{y})^2}{1 - W(\vec{y})}\right) W(\vec{y})}
    \right],
    \label{confidence_interval}
\end{equation}
where $z_{0.025}\approx 1.96$ is a Normal distribution percentile. The formula in (\ref{confidence_interval}) is derived from the theory of weighted sample variances and reliability weights (cf. Sec.~21.7 of \cite{galassignu}). 
In the following lines, we provide some additional details of how the formula is derived. Let $h(\vec{y})$ be the $h$-value estimate for state $\vec{y}$ after $s(\vec{y})$ observations have been collected. It can be checked using (\ref{h_update_equation}) that 
$$h(\vec{y}) = \sum_{r=1}^{\,s(\vec{y})} w^{(r)}(\vec{y}) \, h^{(r)}(\vec{y}),$$
where the weights $w^{(r)}(\vec{y})$ for $r=1,...,s(\vec{y})$ satisfy
$$w^{(r)}(\vec{y}) = \alpha^{(r)}(\vec{y})\prod_{j=r+1}^{\,s(\vec{y})} \Bigl(1 - \alpha^{(j)}(\vec{y})\Bigr).$$
We can compute the sample variance as
$$\begin{aligned}
\hat{\sigma}(\vec{y})^2 &= \frac{\sum_{r=1}^{s(\vec{y})} w^{(r)}(\vec{y}) \bigl(h^{(r)}(\vec{y}) - h(\vec{y})\bigr)^2}{\sum_{r=1}^{s(\vec{y})} w^{(r)}(\vec{y})} \\[6pt]
&= \sum_{r=1}^{s(\vec{y})} w^{(r)}(\vec{y})\cdot h^{(r)}(\vec{y})^2-h(\vec{y})^2\\[6pt]
&= SS(\vec{y}) - h(\vec{y})^2.
\end{aligned}$$
Then, to obtain an unbiased estimate $\sigma(\vec{y})$, we use
$$ \sigma(\vec{y})^2 = \frac{\hat{\sigma}(\vec{y})^2}{1 - (V_2/V_1^2)},$$
where
$$V_1 = \sum_{r=1}^{\,s(\vec{y})} w^{(r)}(\vec{y}), \qquad V_2 = \sum_{r=1}^{\,s(\vec{y})} w^{(r)}(\vec{y})^2.$$
As noted above, $V_1=1$ and we also have $V_2=W(\vec{y})$. Hence,
$$\sigma(\vec{y})^2 = \frac{SS(\vec{y}) - h(\vec{y})^2}{1 - W(\vec{y})}.$$
Thus, the effective variance of the weighted sample mean is 
$$\frac{\sigma(\vec{y})^2}{1/\sum_{r=1}^{s(\vec{y})} w^{(r)}(\vec{y})^2} = \left(\frac{SS(\vec{y}) - h(\vec{y})^2}{1 - W(\vec{y})}\right) W(\vec{y}),$$
which justifies (\ref{confidence_interval}). After computing the confidence intervals, the algorithm selects an action $a^* \in A_{\vec{x}}$ by, firstly, checking to see whether there is an action $ a^* $ satisfying the inequality
$$\sum_{\vec{y} \in S} p_{\vec{x},\vec{y}}(a^*)\, h(\vec{y}) < \sum_{\vec{y} \in S} p_{\vec{x},\vec{y}}(a')\, h(\vec{y})$$
for all alternative actions $ a' \neq a^* $ and for all possible values of $h(\vec{y})$ (for $\vec{y}\in F(\vec{x})$) within their respective confidence intervals $ [h^-(\vec{y}), h^+(\vec{y})] $. If no such $a^*$ exists, we resort to the base policy as a `safety measure' and set $a^* = \theta(\vec{x})$.

To continuously improve the accuracy of the value estimates, the SampleTrajectory module is called at every iteration. On each iteration, it is allowed to run for a maximum $\delta = 0.01$ seconds and the statistics in $U$ are updated during this period. The motivation for this step is that in the `real' system, there would be a certain amount of time ($\delta$) in between any pair of state transitions, and therefore we can make use of this time to improve value function estimates for states that are near the current state. Finally, each iteration ends with a real-time transition from the current state $\vec{x}$ to a successor state based on the chosen action $a^*$ (note that this is the action given by the `improving' step, not the action given by the base policy). After a large number of iterations, $R^{\text{on}}$, have been completed, the online part ends and the average cost over all time steps is returned as a performance metric for the OPI policy. \\

\subsection{Pseudocode of OPI}\label{AppE5}

\textcolor{black}{On the next four pages we provide pseudocode for the following modules, which are used as part of the online policy improvement method:
\begin{itemize}
\item SampleTrajectory
\item Offline part - preparatory phase
\item Offline part - main phase
\item Online part
\end{itemize}
Please note that full descriptions of these modules (including their purposes, inputs and outputs) are provided in Appendices \ref{AppE1}, \ref{AppE2}, \ref{AppE3} and \ref{AppE4} respectively.}\\

\begin{algorithm}[H] 
\captionsetup{labelformat=empty} 
\caption{\textbf{SampleTrajectory module}}
\label{alg:module_simulation}
\begin{algorithmic}[1]
\State \textbf{Input:} State $\vec{z}$; reference state $\vec{u}_0$; array $U$; recording length $p$; policy $\theta$; average cost $g_\theta$
\State \textbf{Initialize:}
\State Total cost incurred $C \gets 0$; total time steps $T \gets 0$
\State Current state $\vec{z}_{\text{curr}} \gets \vec{z}$; visited list $\texttt{visited} \gets [(\vec{z}, 0, 0)]$
\State Start time: $t_{\text{start}} \gets \text{current time}$

\While{true}
    \State $C \gets C + c(\vec{z}_{\text{curr}})$; $T \gets T + 1$ 
    \State Simulate transition: $\vec{z}_{\text{curr}} \xrightarrow{\theta(\vec{z}_{\text{curr}})} \vec{x}$
    \If{$(\vec{x} \neq \vec{z} \textbf{ or } \vec{x} = \vec{u}_0)$ \textbf{and} $\vec{x} \in U$}
        \State Set stopping state $\vec{u} \gets \vec{x}$; \textbf{break}
    \Else
        \State $\vec{z}_{\text{curr}} \gets \vec{x}$
        \If{$\vec{x} \notin \texttt{visited}$}
            \State Append $(\vec{x}, C, T)$ to $\texttt{visited}$
        \EndIf
    \EndIf
\EndWhile

\For{each $(\vec{x}, C_{\vec{x}}, T_{\vec{x}})$ in $\texttt{visited}\;[\;{:}\;p]$} \Comment{first $p$ visited during simulation}
    \If{$\vec{x} \notin U$}
        \State Initialize: $U(\vec{x}) \gets (0, 0, 0, 0)$
    \EndIf
    \State Unpack: $(h(\vec{x}), SS(\vec{x}), W(\vec{x}), s(\vec{x})) \gets U(\vec{x})$; $(h(\vec{u}), SS(\vec{u}), W(\vec{u}), s(\vec{u})) \gets U(\vec{u})$
    \State $s \gets s(\vec{x}) + 1$; $s(\vec{x}) \gets s$
    \State Compute learning parameter: $\alpha^{(s)}(\vec{x}) \gets \frac{10}{10 + s - 1}$
    \State Compute estimate: $h^{(s)}(\vec{x}) \gets (C-C_{\vec{x}}) + h(\vec{u}) - g_\theta \cdot (T-T_{\vec{x}})$
    \State Update:
    \[
    \begin{aligned}
    h(\vec{x}) &\gets (1 - \alpha^{(s)}(\vec{x})) \cdot h(\vec{x}) + \alpha^{(s)}(\vec{x}) \cdot h^{(s)}(\vec{x}) \\
    SS(\vec{x}) &\gets (1 - \alpha^{(s)}(\vec{x})) \cdot SS(\vec{x}) + \alpha^{(s)}(\vec{x}) \cdot (h^{(s)}(\vec{x}))^2 \\
    W(\vec{x}) &\gets (1 - \alpha^{(s)}(\vec{x}))^2 \cdot W(\vec{x}) + (\alpha^{(s)}(\vec{x}))^2
    \end{aligned}
    \]
    \State $U(\vec{x}) \gets (h(\vec{x}), SS(\vec{x}), W(\vec{x}), s(\vec{x}))$
\EndFor
\State End time: $t_{\text{end}} \gets \text{current time}$; simulation time $\tau_{\delta} \gets t_{\text{end}} - t_{\text{start}} $
\State \textbf{Output:} Array $U$; stopping state $\vec{u}$; simulation time $\tau_{\delta}$
\end{algorithmic}
\footnotesize\noindent\textbf{Note:} For each state $\vec{x}$, $s(\vec{x})$ is the number of estimates so far. The current estimate $h^{(s)}(\vec{x})$ and learning parameter $\alpha^{(s)}(\vec{x})$ correspond to the $s$-th update.
\end{algorithm}

\begin{algorithm}[H] 
\captionsetup{labelformat=empty} 
\caption{\textbf{Offline part - preparatory phase}}
\label{alg:offline_prep}
\begin{algorithmic}[1]
\State \textbf{Input:} Base policy $\theta$; simulation lengths $R^{(1)}$, $R^{(2)}$

\vspace{0.3em}

\State \textbf{Stage 1: Construct core state set $Z^{\text{core}}$}
\State Initialize $Z^{\text{core}} \gets \emptyset$
\For{each machine $i \in M$}
    \State Initialize $r \gets 0$, $\vec{x} \gets (i,(0,...,0))$, $S_i \gets \emptyset$; set $N_i(\vec{x}) \gets 0$ upon adding $\vec{x}$ to $S_i$ 
    \While{$r < R^{(1)}$}
        \State Simulate transition: $\vec{x} \xrightarrow{\theta(\vec{x})} \vec{y}$
        \State $\vec{x}=(j,(x_1,...,x_m)) \gets \vec{y}$; $r \gets r + 1$
        \If{$j= i$}
            \State $S_i \gets S_i \cup \{\vec{x}\}$; $N_i(\vec{x}) \gets N_i(\vec{x}) + 1$
        \EndIf
    \EndWhile
    \State $\vec{z}_i \gets \arg\max_{\vec{x} \in S_i} N_i(\vec{x})$; $Z^{\text{core}} \gets Z^{\text{core}} \cup \{\vec{z}_i\}$
\EndFor

\vspace{0.5em}

\State \textbf{Stage 2: Estimate average cost $g_{\theta}$ and choose reference state $\vec{u}_0$}
\State Initialize $C \gets 0$, $r \gets 0$, $\vec{x} \gets (1, (0,...,0))$, and counters $n_i \gets 0$ for all $i \in M$
\While{$r < R^{(2)}$}
    \State $C \gets C + c(\vec{x})$
    \State Simulate transition: $\vec{x} \xrightarrow{\theta(\vec{x})} \vec{y}$
    \State $\vec{x}=(i,(x_1,...,x_m)) \gets \vec{y}$; $r \gets r + 1$
    \If{$i \in M$}
        \State $n_i \gets n_i  + 1$
    \EndIf
    
\EndWhile
\State $g_{\theta} \gets C/R^{(2)}$; $j^* \gets \arg\max_i n_i$; $\vec{u}_0 \gets \vec{z}_{j^*}$; move $\vec{z}_{j^*}$ to the front of $Z^{\text{core}}$

\vspace{0.3em}

\State \textbf{Stage 3: Construct representative state set $Z$}
\State Initialize $Z \gets \emptyset$
\For{each $\vec{z}_i=(i,(z_1,...,z_m)) \in Z^{\text{core}}$}
    \State $Z \gets Z \cup \left\{ \vec{z}_i \right\}
        \cup \left\{ \vec{z}_i^{\rightarrow j} : j \text{ adjacent to } i \right\}$
    \If{$z_i\geq 1$}
        \State $Z \gets Z \cup \{\vec{z}_i^{i-}\}$ 
    \EndIf
\EndFor

\vspace{0.3em}

\State \textbf{Output:} Average cost $g_{\theta}$; reference state $\vec{u}_0$; core state set $Z^{\text{core}}$; representative state set $Z$
\end{algorithmic}
\end{algorithm}

\begin{algorithm}[H] 
\captionsetup{labelformat=empty} 
\caption{\textbf{Offline part - main phase}}
\label{alg:offline_simulation}
\begin{algorithmic}[1]
\State \textbf{Input:} Average cost $g_{\theta}$; reference state $\vec{u}_0$; core state set $Z^{\text{core}}$; representative state set $Z$; simulation length $R^{\text{off}}$; time limit $\tau_\text{max}$
\State Initialize array $U \gets \emptyset$; set $U(\vec{u}_0) \gets (0, 0, 1, 1)$

\vspace{0.3em}
\State \textbf{Stage 1: Value estimation for representative states}
\For{each $\vec{z} \in Z$}
    \State Initialize $r \gets 0$; $\tau \gets 0$; recording length $p \gets 1$
    \While{$r < R^{\text{off}}$ \textbf{and} $\tau < \tau_\text{max}$}
        \State Run \textbf{SampleTrajectory}($\vec{z}$) $\rightarrow$ $U$, stopping state $\vec{u}$, time $\tau_{\delta}$ \Comment{Ignore $\vec{u}$}
        \State $r \gets r + 1$; $\tau \gets \tau + \tau_{\delta}$
    \EndWhile
\EndFor

\vspace{0.3em}
\State \textbf{Stage 2: System evolution from core states}
\For{each $\vec{z} \in Z^{\text{core}}$}
    \State Initialize $r \gets 0$; $\tau \gets 0$; recording length $p \gets 5$
    \While{$r < R^{\text{off}}$ \textbf{and} $\tau < \tau_\text{max}$}
        \State Run \textbf{SampleTrajectory}($\vec{z}$) $\rightarrow$ $U$, stopping state $\vec{u}$, time $\tau_{\delta}$
        \State $\vec{z} \gets \vec{u}$; $r \gets r + 1$; $\tau \gets \tau + \tau_{\delta}$
    \EndWhile
\EndFor

\vspace{0.3em}
\State \textbf{Output:} Array $U$
\end{algorithmic}
\end{algorithm}

\begin{algorithm}[H] 
\captionsetup{labelformat=empty} 
\caption{\textbf{Online part}}
\label{alg:online_part}
\begin{algorithmic}[1]
\State \textbf{Input:} Average cost $g_{\theta}$; reference state $\vec{u}_0$; array $U$; simulation length $R^{\text{on}}$; $\delta$-length time limit
\State Initialize $C \gets 0$, $r \gets 0$, $\vec{x} \gets \vec{u}_0$
\While{$r < R^{\text{on}}$}
    \State $C \gets C + c(\vec{x})$
    \State \textbf{Stage 1: Action selection}
    \State Define local neighborhood $F(\vec{x})$
    \For{each $\vec{y} \in F(\vec{x})$}
        \State Compute confidence interval $[h^-(\vec{y}), h^+(\vec{y})]$ from $U$
    \EndFor
    \State Select $a^*\in A_{\vec{x}}$ satisfying:
    $$\sum_{\vec{y} \in S} p_{\vec{x},\vec{y}}(a^*)\, h(\vec{y}) < \sum_{\vec{y} \in S} p_{\vec{x},\vec{y}}(a')\, h(\vec{y}), \quad \forall a' \neq a^*, \quad \forall h(\vec{y}) \in [h^-(\vec{y}), h^+(\vec{y})].$$
     \If{no such \( a^* \) exists}
        \State Set \( a^* \gets \theta(\vec{x}) \)
    \EndIf
    \vspace{0.5em}

    \State \textbf{Stage 2: Information supplement}
    \State Initialize $\tau \gets 0$; recording length $p \gets 1$
    \While{$\tau < \delta$}
        \State Simulate transition: $\vec{x} \xrightarrow{a^*} \vec{x}'$
        \State Define local neighborhood $F(\vec{x}')$
        \For{each $\vec{y} \in F(\vec{x}')$}
            \State Run \textbf{SampleTrajectory}($\vec{y}$) $\rightarrow$ $U$, $\vec{u}$, $\tau_{\delta}$
            \State $\tau \gets \tau + \tau_{\delta}$
        \EndFor
    \EndWhile
    \vspace{0.5em}

    \State \textbf{Stage 3: Real-time transition}
    \State Simulate transition: $\vec{x} \xrightarrow{a^*} \vec{x}_{\text{new}}$
    \State $\vec{x} \gets \vec{x}_{\text{new}};\; r \gets r + 1$
\EndWhile
\State $g \gets C / R^{\text{on}}$
\State \textbf{Output:} Average cost $g$
\end{algorithmic}
\footnotesize \textbf{Note:} Confidence intervals are computed using the method of reliability weights (\cite{galassignu}, Sec. 21.7)
\end{algorithm}

\section{Methods for generating the parameters for the numerical experiments in Section \ref{sec:numerical}}\label{AppGen}

For each of the 1,000 instances considered in Section \ref{sec:numerical}, the system parameters are randomly generated as follows:
\begin{itemize}
    \item \textcolor{black}{The type of cost function is sampled unbiasedly from the set $\{1, 2, 3\}$, where:}
    \begin{enumerate}[(i)]
        \item \textcolor{black}{Type 1 represents a linear cost function, i.e., the cost incurred per unit time for machine $i \in M$ in state $x_i$ is given by $f_i(x_i) = c_i x_i$;}
        \item \textcolor{black}{Type 2 represents a quadratic cost function, i.e., the cost incurred per unit time for machine $i \in M$ in state $x_i$ is given by
        $f_i(x_i) = c_i x_i^2$;}
        \item \textcolor{black}{Type 3 represents a piecewise linear cost function, i.e., the cost incurred per unit time for machine $i \in M$ in state $x_i$ is given by $f_i(x_i) = c_i \left( x_i + 10 \cdot \mathbb{I}(x_i = K_i) \right),$
        where $\mathbb{I}(x_i = K_i)$ is the indicator function that equals 1 if $x_i = K_i$, and 0 otherwise.}
    \end{enumerate}

    \item The `failed' state, $K$, is sampled unbiasedly from the set $\{1,2,3,4,5\}$.
    \item \textcolor{black}{The number of machines, $m$, is sampled unbiasedly from the set $\{2,3,4,5,6,7,8\}$}. 
    \item The $a$-coordinate and $b$-coordinate of each machine $i\in M$, denoted as $(a_i,b_i)$, are independently sampled unbiasedly from the set $\{1,2,3,4,5\}$, with resampling used if two or more machines share the same pair of coordinates.
    \item The machines in $M$ are re-numbered according to the $a$-coordinate first and then (in case of ties) by the $b$ coordinate.
    \item The overall traffic intensity, $\rho$, is sampled from a continuous uniform distribution between 0.1 and 1.5. Subsequently, the degradation rates $\lambda_i$ and repair rates $\mu_i$ for $i\in M$ are generated as follows:
    \begin{itemize}
        \item Each repair rate $\mu_i$ is initially sampled from a continuous uniform distribution between 0.1 and 0.9.
        \item For each machine $i\in M$, an initial value for the degradation rate $\lambda'_i$ is sampled from a continuous uniform distribution between $0.1\mu_i$ and $\mu_i$.
        \item For each machine $i\in M$, the actual traffic intensity $\rho_i$ is obtained by re-scaling the initial traffic intensity $\lambda'_i/\mu_i$, as follows:
        $$\rho_i:=\frac{\lambda'_i/\mu_i}{\sum_{i\in M}\lambda'_i/\mu_i}\rho.$$
        This ensures that $\sum_{i\in M}\rho_i=\rho$.
        \item For each machine $i\in M$, the actual degradation rate $\lambda_i$ is obtained as follows:
        $$\lambda_i:=\rho_i\mu_i.$$
        \item All of the $\lambda_i$ and $\mu_i$ values are rounded to 2 significant figures (also causing a small change to the value of $\rho$).
\end{itemize}

    \item For each machine $i\in M$, the cost rate, $c_i$, is sampled from a continuous uniform distribution between 0.1 and 0.9.
    \item In order to generate the switching rate $\tau$, we first define $\eta:=\tau/(\sum_{i\in M}\lambda_i)$ and generate the value of $\eta$ as follows:
        \begin{itemize}
        \item Sample a value $p$ from a continuous uniform distribution between 0 and 1.
        \item If $p < 0.5$, sample $\eta$ from a continuous uniform distribution between 0.1 and 1.
        \item If $p\geq 0.5$, sample $\eta$ from a continuous uniform distribution between 1 and 10.
        \end{itemize}
    We then define  $\tau := \eta \sum_{i\in M}\lambda_i$.\\
    \end{itemize}

\textcolor{black}{\section{Policy Iteration Algorithm for systems with $2\leq m\leq 4$}\label{AppPIA}}

\textcolor{black}{The following pseudocode shows how policy iteration is used to calculate the optimal value of $g^*$ for systems with $2\leq m\leq 4$ in our numerical experiments. Specifically, we implement the policy iteration algorithm described in \cite{Puterman1994} (p. 378), with the relative value function for each candidate policy computed via successive approximations until a convergence criterion is met.}

\begin{algorithm}[H]
\color{black}
\captionsetup{labelformat=empty}
\caption{\textbf{Policy Iteration Algorithm}}
\label{alg:optimal_unichain_pia}
\begin{algorithmic}[1]

\State \textbf{Input:} base policy $\theta$, reference state $\vec{x}_0$, tolerance $\varepsilon$
\vspace{0.5em}

\State \textbf{Initialization:}
\State $v(\vec{x}) \gets 0$, $g(\vec{x}) \gets 0$ for all $\vec{x} \in \theta$
\State $v^{+}(\vec{x}) \gets 0$, $g^{+}(\vec{x}) \gets 0$ for all $\vec{x} \in \theta$
\State $g^{\textup{error}} \gets \infty$

\vspace{0.8em}
\State \textbf{Stage 1: Policy Evaluation Algorithm (PEA)}
\While{$g^{\textup{error}} \ge \varepsilon$}
    \State $g^{\textup{error}} \gets 0$
    \For{each state $\vec{x} \in \theta$}
        \State Compute $p_{\vec{x},\vec{y}}$ and cost $c(\vec{x})$
        \State
        \[
        v^{+}(\vec{x}) \gets
        c(\vec{x})
        + \sum_{\vec{y}} p_{\vec{x},\vec{y}} v(\vec{y})
        + \Bigl(1-\sum_{\vec{y}} p_{\vec{x},\vec{y}}\Bigr)v(\vec{x})
        \]
        \State $g^{+}(\vec{x}) \gets v^{+}(\vec{x}) - v(\vec{x})$
        \State $g^{\textup{error}} \gets
        \max\!\left\{ g^{\textup{error}},
        |g^{+}(\vec{x})-g(\vec{x})| \right\}$
    \EndFor
    \State $v \gets v^{+}$, $g \gets g^{+}$
\EndWhile
\State $g^{\theta} \gets g(\vec{x}_0)$
\State $v(\vec{x}) \gets v(\vec{x}) - v(\vec{x}_0)$

\vspace{0.8em}
\State \textbf{Stage 2: Successive Policy Improvement Algorithm (nPIA)}
\State $n \gets 0$, $\theta^{n} \gets \theta$
\Repeat
    \For{each state $\vec{x} \in \theta^{n}$}
        \State For each admissible action $a$ at $\vec{x}$, compute
        $$
        Q(\vec{x},a)
        =
        c(\vec{x})
        + \sum_{\vec{y}} p_{\vec{x},\vec{y}}(a)\, v(\vec{y})
        + \Bigl(1-\sum_{\vec{y}} p_{\vec{x},\vec{y}}(a)\Bigr)v(\vec{x})
        $$
        \State $\theta^{n+1}(\vec{x}) \gets \arg\min_{a} Q(\vec{x},a)$
    \EndFor
    \State Evaluate $\theta^{n+1}$ using \textbf{Stage~1 (PEA)}
    \State $n \gets n+1$

\Until{$\theta^{n} = \theta^{n-1}$}
\State $g^{*} \gets g(\vec{x}_0)$
\vspace{0.8em}
\State \textbf{Output:} $g^*$

\end{algorithmic}
\end{algorithm}

\textcolor{black}{Computational times for this algorithm (categorized according to the value of $m$) are reported in Table \ref{PIA_comptimes} below.}

\begin{table}[htbp] 
	\centering
	\color{black}
	\caption{\textcolor{black}{Average running times (in seconds) for policy iteration algorithm, over 412 problem instances with $2 \leq m \leq 4$ on randomly-generated networks.}}
	\label{PIA_comptimes}
    \footnotesize
		\begin{tabular}{lcccccccccccccccccccc} 
			\toprule 
			&$m=2$ &$m=3$ &$m=4$\\
             & [132 instances] &[148 instances] &[132 instances]\\
			\midrule
                Av. run times & $0.091$ & $1.21$ & $11.73$ \\
			\bottomrule 
	    \end{tabular}
\end{table}

\section{Simulation methods for the numerical experiments in Section \ref{sec:numerical}}\label{AppSim}

In each of the 1,000 problem instances, the performances of the index policy and the policies given by the polling heuristic are estimated by simulating the discrete-time evolution of the uniformized MDP described in Section \ref{sec:formulation}. We also use a `common random numbers' method to ensure that machine degradations occur at the same times under each of these policies. More specifically, the simulation steps are as follows:
\begin{enumerate}
\item Generate a set of random system parameters as described in Appendix \ref{AppGen}.
\item Set $R^{\text{sim}}:=500,000$ as the number of time steps in the simulation.
\item Generate a list $Z$ of length $R^{\text{sim}}$, consisting of uniformly-distributed random numbers between 0 and 1.
\item Consider each heuristic policy in turn. For each one, we set $\vec{x}_0:=(1,(0,0,...,0))$ as the initial state, and then use the random numbers in $Z$ to simulate events during these $R^{\text{sim}}$ time steps, with the system beginning in state $\vec{x}_0$. The statistics collected during these time steps are used to quantify the heuristic's performance.
\end{enumerate}

When implementing the OPI policy, we need to decide on the values of $R^{(1)}$, $R^{(2)}$, $R^{\text{off}}$, $\tau_{\max}$ and $R^{\text{on}}$ used in the offline and online stages (see the pseudocode in Appendix \ref{AppPI} for details). We use the values $R^{(1)}=10,000$, $R^{(2)}=500,000$, $R^{\text{off}}=100,000$, $\tau_{\max}=100$ and $R^{\text{on}}=500,000$.

\end{appendices}


%

\end{document}